\documentclass[11pt,twoside, a4paper, english, reqno]{amsart}
 \usepackage[foot]{amsaddr}
\usepackage[dvips]{epsfig}
\usepackage{amscd}
\usepackage{amssymb}
\usepackage{amsthm}
\usepackage{amsmath}
\usepackage{latexsym}

\usepackage{upref}
\usepackage{bm}
\usepackage{color}
\usepackage{hyperref}
\usepackage{graphicx}
\usepackage{lipsum}
\usepackage{comment}
\usepackage{romannum}

\hypersetup{linkcolor=blue, colorlinks=true
,citecolor = red}

\setlength{\topmargin}{-.5cm}
\setlength{\textheight}{23cm}
\setlength{\evensidemargin}{0.0cm}
\setlength{\oddsidemargin}{01.2cm}

\setlength{\textwidth}{15.1cm}

\theoremstyle{plain}
\newtheorem{Th}{Theorem}[section]
\newtheorem{Lem}[Th]{Lemma}
\newtheorem{Cor}[Th]{Corollary}

\theoremstyle{definition}
\newtheorem{Def}[Th]{Definition}

\newtheorem{Rem}[Th]{Remark}

\def\@setemails{%
  \ifnum\theg@author > 1
\mbox{{\itshape E-mail addresses}:\space}{\ttfamily\emails}. \else
\mbox{{\itshape E-mail address}:\space}{\ttfamily\emails}. \fi%
}

\newcommand{\De} {\Delta}
\newcommand{\be} {\begin{equation}}
\newcommand{\ee} {\end{equation}}
\newcommand{\la} {\lambda}
\newcommand{\R} {\mathbb{R}}

\newcommand{\bn}{\mathbb{B}^{N}}

\newcommand{\dx}{\,dV_{\mathbb{B}^N}}

\newcommand{\rn}{\mathbb{R}^{N}}

\newcommand{\intr}{\int_{\rn}}
\newcommand{\intb}{\int_{B^N}}
\newcommand{\inth}{\int_{\bn}}

\newcommand{\calu}{\mathcal{U}}

\newcommand{\authorfootnotes}{\renewcommand\thefootnote{\@fnsymbol\c@footnote}}%

\def\e{{\text{e}}}

\numberwithin{equation}{section} \allowdisplaybreaks

\title[log-perturbed Br\'ezis-Nirenberg problem on the hyperbolic space]{A note on the log-perturbed Br\'ezis-Nirenberg problem on the hyperbolic space}

\author[M. Ghosh]{Monideep Ghosh}

\author[A. Joseph]{Anumol Joseph}

\author[D. Karmakar]{Debabrata Karmakar}

\email{\tt monideep@tifrbng.res.in, anumol24@tifrbng.res.in,  debabrata@tifrbng.res.in}

 \address[]{Tata Institute of Fundamental Research, Centre For Applicable Mathematics}
 \address[]{Post Bag No 6503, GKVK Post Office,
Sharada Nagar, Chikkabommasandra,
Bangalore 560065,
Karnataka, India}

\keywords{Br\'ezis-Nirenberg problem,  logarithmic perturbation, critical exponents, positive solution, existence or non-existence}

\subjclass[2020]{35A01, 35A15, 35B09, 35B33}

\setcounter{tocdepth}{1}
\begin{document}
\pagenumbering{arabic}
\begin{abstract}
We consider the log-perturbed Br\'ezis-Nirenberg problem on the hyperbolic space
 \begin{align*}
 \Delta_{\bn}u+\la u +|u|^{p-1}u+\theta u \ln u^2 =0,  \ \ \ \ u \in H^1(\bn), \ u > 0 \ \mbox{in} \ \bn,
 \end{align*}
 and study the existence vs non-existence results. We show that whenever $\theta >0,$ there exists an $H^1$-solution, while for $\theta <0$, there does not exist a positive solution in a reasonably general class. Since the perturbation $ u \ln u^2$ changes sign, Pohozaev type identities do not yield any non-existence results. The main contribution of this article is obtaining an ``almost" precise lower asymptotic decay estimate on the positive solutions for $\theta <0,$ culminating in proving their non-existence assertion.    
 \end{abstract}

\maketitle
{\small\tableofcontents}

\section{Introduction}
We investigate the existence or non-existence of positive solutions to the Br\'ezis-Nirenberg problem with a logarithmic perturbation in the hyperbolic space. The primary focus of this article is to differentiate a critical threshold that separates the existence and non-existence of solutions. While demonstrating the compactness of a constrained minimization problem below a certain energy threshold provides a clear path to positive solutions, establishing the non-existence of solutions does not seem to have a straightforward strategy. Therefore, proving the non-existence of solutions requires a problem-specific approach that demands a more detailed examination of the problem at hand. Additionally, determining an optimal critical threshold that distinguishes between the existence and non-existence of solutions, in our humble opinion, is inherently an interesting problem to explore.

\medskip

In this article, we have obtained a quite clean existence vs non-existence result of the following problem

 \begin{align}\label{LSE} 
 \Delta_{\bn}u+\la u +|u|^{p-1}u+\theta u \ln u^2 =0,  \ \ \ \ u \in H^1(\bn), \ u > 0 \ \mbox{in} \ \bn,
 \end{align}
where  we assume the parameters satisfy 
$$
N \geq 3, \la \in \mathbb{R}, \theta \in \mathbb{R}, \ \mbox{and} \  1 < p   \leq 2^{\star}-1,
$$ 
and $2^\star = \frac{2N}{N-2}$ is the critical exponent in regard to the embedding of $H^1(\bn)$ into $L^{2^{\star}}(\bn).$ When $N=2$, we consider any $p \in (1, \infty).$

\medskip

Here and throughout the article $\bn$ denotes the ball model of the hyperbolic $N$-space,
$\Delta_{\bn}$ denotes the Laplace-Beltrami operator, and $\dx$ is the volume element. Before defining an appropriate  notion of a solution to \eqref{LSE}, let us briefly introduce the necessary terminologies. 

\medskip

Let $\lambda_1$ denotes the $L^2$-bottom of the spectrum of $-\Delta_{\bn}$ defined by 
\begin{align}\label{la1}
\la_1 : = \inf_{u \in C_c^{\infty}(\bn)} \frac{\|\nabla_{\bn} u\|_{2}^2}{\| u\|_{2}^2} = \frac{(N-1)^2}{4},
\end{align}
where $\nabla_{\bn}$ is the gradient vector field and $\|\cdot\|_q$ denotes the $L^q$-norm with respect to the volume element $\dx.$ 

\medskip

Let $H^1(\bn)$ be the classical Sobolev space defined by the closure of $C_c^{\infty}(\bn)$ with respect to the norm $\|u\|_{H^1(\bn)}=\|\nabla_{\bn} u\|_{2}.$  Thanks to \eqref{la1}
the norms
 \begin{align*}
\|u\|_{\la} : = \left(\int_{\bn} \left(|\nabla_{\bn} u|_g^2 - \la u^2\right) \ \dx \right)^{\frac{1}{2}}, 
 \end{align*}
are all equivalent as long as $\la < \la_1.$ When $\la = \la_1,$ we define $\mathcal{H}^1(\bn) := \overline{C_c^{\infty}(\bn)}^{\|\cdot\|_{\la_1}},$ which is a bigger space than $H^1(\bn),$ with strict inclusion -- there exist elements of $\mathcal{H}^1(\bn)$ which are not square integrable. However, one can show that $\mathcal{H}^1(\bn) \subset H^1_{\mbox{\tiny{loc}}}(\bn).$

\medskip

We next define a notion of a (local) solution to \eqref{LSE}.

\medskip

\begin{Def} \label{wf}
We say $u\in H^1_{\mbox{\tiny{loc}}}(\bn)$ is a weak solution of (\ref{LSE}), if  $u$ verifies
\begin{align*}
\inth \langle \nabla_{\bn} u,\nabla_{\bn} \phi \rangle_g \dx-\la\inth u\phi\dx=\theta \inth \phi u\ln u^2\dx +\inth \phi |u|^{p-1}u\dx
\end{align*}
for all $\phi\in C_c^{\infty}(\bn).$ If $u \in H^1(\bn),$ we call it an energy solution.

\medskip

By definition solutions are $H^1_{\mbox{\tiny{loc}}}(\bn)$ and therefore by applying standard (local) elliptic regularity theory, we see that a weak solution, if exists, is always smooth and hence a classical solution.
\end{Def}

The following are the main results of this article.

\begin{Th} \label{main}
Let $N \geq 4,$  $\la \in \mathbb{R},$ and $1 < p \leq 2^{\star} - 1.$  
\begin{itemize}
\item[(a)]  If $\theta > 0,$ then there exists a positive classical solution $u \in H^1(\bn)$ to the equation \eqref{LSE}, which is also a ground state solution. 
\item[(b)] If $\theta < 0,$ then there does not exists any positive solution in $H^1(\bn)$ to \eqref{LSE}. 
\end{itemize}
\end{Th}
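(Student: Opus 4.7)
For part (a), the plan is to produce a positive ground state variationally. I would work with the energy functional
\[
I(u) = \frac{1}{2}\|u\|_{\la}^2 - \frac{1}{p+1}\inth |u|^{p+1}\dx -\frac{\theta}{2}\inth u^2\ln u^2\dx+\frac{\theta}{2}\inth u^2\dx,
\]
whose (suitably regular) critical points solve \eqref{LSE}. Since $\theta>0$, the log perturbation is ``almost critical'' (subcritical even at $p=2^\star-1$) with a favorable sign on $\{u^2<1\}$, which gives coercivity on the Nehari manifold. Constrained minimization, or a mountain-pass argument, then produces a candidate critical point; compactness for $1<p<2^\star-1$ is standard via the hyperbolic Sobolev embedding. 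The delicate case is the critical power $p=2^\star-1$: there one must show that the mountain-pass level lies strictly below the natural critical Sobolev threshold on $\bn$, ruling out concentration. Testing along a family of rescaled extremals of the hyperbolic Sobolev inequality and exploiting the sign of $\theta$ should produce the required strict gap, in the spirit of the lower-order trick of Br\'ezis--Nirenberg. Positivity and smoothness of the minimizer then follow from elliptic regularity and the strong maximum principle.

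For part (b), the strategy rests on a careful asymptotic decay analysis. First, I would show that any $u\in H^1(\bn)$ solving \eqref{LSE} is smooth, positive, and satisfies $u(x)\to 0$ as $d(x,0)\to\infty$ (Moser / De Giorgi--Nash--Moser iteration). Comparing with a supersolution of the linearized operator $-\Delta_{\bn}-\la$ outside a large geodesic ball yields an initial upper envelope
\[
u(x)\le C\exp\!\big(-\alpha\, d(x,0)\big)
\]
for some $\alpha>0$ governed by $\la$. Crucially, since $\theta<0$ and $\ln u^2\le -2\alpha\, d(x,0)+C$, we have $\theta u\ln u^2\ge (2|\theta|\alpha\, d(x,0)-C)u$, so that outside a compact set
\[
-\Delta_{\bn}u\;\ge\;\big(\la+2|\theta|\alpha\, d(x,0)-C\big)u.
\]
The \emph{linearly growing} effective potential $2|\theta|\alpha\, d(x,0)$ forces a strictly faster decay, and bootstrapping this improvement (each refined upper bound enlarges the effective potential, which in turn sharpens the decay) pushes $u$ to decay faster than any exponential.

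The main obstacle, and the central contribution announced in the abstract, is the matching \emph{lower} asymptotic decay estimate: to show that every positive solution $u$ must satisfy $u(x)\ge c\exp(-\Phi(d(x,0)))$ for an explicit profile $\Phi$ whose growth is controlled essentially by the logarithmic term alone. The natural route is to construct an explicit positive subsolution of \eqref{LSE} on the complement of a large geodesic ball and then conclude $u\ge c\,\underline{u}$ via a Harnack or sliding-comparison argument. Because the log perturbation is not a pure power, $\Phi$ can only be pinned down modulo lower-order corrections --- hence the word ``almost'' in the abstract. Combining this lower bound with the super-exponential upper envelope from the bootstrap yields the contradiction, either directly by comparing the asymptotic growth of $\Phi$ with that of the bootstrap exponent, or by testing \eqref{LSE} against a suitable cut-off of $u$ and showing the resulting integral identity cannot hold. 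The principal difficulty is quantifying both bounds sharply enough on the slowly varying logarithmic scale --- where the perturbation contributes only modest corrections to the leading exponential decay --- and producing a subsolution large enough to activate the contradiction; this is the delicate maximum-principle work that I expect to occupy the bulk of the paper.
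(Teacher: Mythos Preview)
For part (a) your plan is essentially the paper's: constrained minimization on the Nehari set in the subcritical range, and for $p=2^\star-1$ the Br\'ezis--Nirenberg comparison with Aubin--Talenti bubbles to push the level below $\tfrac{1}{N}S^{N/2}$. The only point you underplay is that $J_p$ is not $C^1$ on $H^1(\bn)$, so the paper works in $X=\{u\in H^1_r:u^2\ln u^2\in L^1\}$ and verifies directly (in the style of Shuai) that the constrained minimizer is a weak solution.

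For part (b) your mechanism is not the paper's, and the bootstrap step is in the wrong direction. From $u\le Ce^{-\alpha\rho}$ you get $-\Delta_{\bn}u\ge V(\rho)u$ with $V\to+\infty$; this is a \emph{supersolution} inequality and, by comparison with subsolutions, yields \emph{lower} bounds on $u$, not faster upper decay. Indeed, since $\theta u\ln u^2>0$ for $0<u<1$, the solution is never a subsolution of any $-\Delta_{\bn}-\mu$ near infinity, so the ``super-exponential upper envelope'' cannot be produced this way. The paper does not use any upper bound at all. It proves only the lower estimate
\[
u(x)\ \ge\ C_0\Big(\sinh\tfrac{d(0,x)}{2}\Big)^{-(N-1)}\qquad\text{for } d(0,x)\ge R_0,
\]
and then observes that this profile is not in $L^2(\bn)$, contradicting $u\in H^1(\bn)$ directly. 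The genuine obstacle---and the point you miss---is that obtaining this lower bound requires comparing $u$ (a supersolution of $-\Delta_{\bn}-\la_0$ on $\{u\le\gamma\}$ for any $\la_0$) with the subsolutions $v_\epsilon=(\sinh\tfrac{\rho}{2})^{-(N-1+\epsilon)}$, which only exist for $\la_0>\la_1$; but for such $\la_0$ the maximum principle \emph{fails} for $-\Delta_{\bn}-\la_0$, so neither Harnack nor sliding applies. The paper gets around this via Picone's inequality together with a Caccioppoli-type bound on $\int |\nabla_{\bn}u|^2u^{-2}(v_\epsilon^2-u^2)^+\,\dx$, forcing $u\ge C_0v_\epsilon$ without any comparison principle, and then lets $\epsilon\to0$.
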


In Theorem \ref{main}, a ground state solution means it is a solution with the least energy in an appropriate sense defined in section \ref{prelim}. In addition, we will show that a positive radial energy solution to \eqref{LSE} is strictly decreasing when $\theta>0$.
The proof of the non-existence result relies on a delicate asymptotic decay estimate from below for the positive $H^1$-solutions. With a little bit more work, we can show non-existence result for a larger class $\mathcal{H}^1(\bn).$

\begin{Th} \label{main2}
Let $\la \in \mathbb{R},$ and either $N \geq 3, 1 < p \leq 2^{\star} - 1,$ or $N=2, 1 < p < \infty.$  Assume further that $\theta<0.$
\begin{itemize}
\item[(a)] If $u$ is a positive $\mathcal{H}^1(\bn)$-solution to \eqref{LSE}, then there exist $R_0>0$ and $C_0>0$ such that 
\begin{align*}
C_0 \ sinh \left(\frac{d(0,x)}{2}\right)^{-(N-1)}\leq u(x), \, \ \ \ \mbox{for all} \ x\in \bn \ \mbox{with} \ d(0,x)\geq R_0.
\end{align*}
\item[(b)] There does not exist any positive  $\mathcal{H}^1(\bn)$-solution to \eqref{LSE}.
\end{itemize}
\end{Th}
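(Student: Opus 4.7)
For part (a), my plan is a comparison argument exploiting the fact that $\theta<0$ turns the log term $\theta u\ln u^2$ into an arbitrarily large positive coefficient as $u\to 0$. I would first establish the pointwise decay $u(x)\to 0$ as $d(0,x)\to\infty$ via the Hardy--Sobolev--Maz'ya embedding $\mathcal{H}^1(\bn)\hookrightarrow L^{2^{\star}}(\bn)$ coupled with Moser iteration on unit geodesic balls (converting the $L^{2^{\star}}$-tail decay into a uniform pointwise bound). Once $u\to 0$ uniformly, for any prescribed $K>0$ one can choose $R_0$ so large that $\la+u^{p-1}+\theta\ln u^2>\la_1+K$ on $\{d>R_0\}$, giving the strict supersolution inequality $-\Delta_{\bn}u\geq(\la_1+K)u$ in that exterior region. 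A direct computation in geodesic polar coordinates using $\Delta_{\bn}f=f''(d)+(N-1)\coth(d)f'(d)$ shows that $v(x):=\sinh(d(0,x)/2)^{-(N-1)}$ satisfies
\[
-\Delta_{\bn}v-\la_1 v=-\frac{N-1}{4\sinh^2(d(0,x)/2)}\,v\leq 0,
\]
so $v$ is a subsolution of $(-\Delta_{\bn}-\la_1)v=0$.

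Choosing $C_0>0$ small enough that $u\geq C_0 v$ on the compact sphere $\{d=R_0\}$, one extends this inequality to all of $\{d>R_0\}$ via the maximum principle. For any $R>R_0$, the function $w:=C_0 v-u$ satisfies $(-\Delta_{\bn}-\la_1)w<0$ on the annulus $\{R_0<d<R\}$ (obtained by combining the sub/supersolution bounds for $v$ and $u$), and $\la_1$ lies strictly below the first Dirichlet eigenvalue of $-\Delta_{\bn}$ on any bounded subdomain of $\bn$, so the standard maximum principle yields $\max_{\{R_0<d<R\}}w\leq\max(0,\,C_0 v(R))$. Letting $R\to\infty$ and using $v(R)\to 0$ gives $w\leq 0$ on $\{d>R_0\}$, i.e., $u\geq C_0 v$.

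For part (b), with the lower bound of (a) in hand, my plan is to derive a contradiction via growth-rate asymmetry in an integrated identity. Testing the equation against $\chi_R^2 u$ with a standard smooth radial cutoff $\chi_R$ equal to $1$ on $B_R$ and supported in $B_{R+1}$, and applying the Leibniz rule, yields a relation balancing $\|\chi_R u\|_{\la_1}^2$, $(\la_1-\la)\int(\chi_R u)^2\dx$, $\int\chi_R^2 u^{p+1}\dx$, the cutoff error $\int u^2|\nabla_{\bn}\chi_R|^2\dx$, and $\theta\int\chi_R^2 u^2\ln u^2\dx$. The lower bound together with the polar identity $v^2\sinh(d)^{N-1}=2^{N-1}\coth(d/2)^{N-1}$ gives $\int_{B_R}u^2\dx\gtrsim R$; moreover, since $y\mapsto y\ln y$ is decreasing on $(0,1/e)$ and $u\to 0$ at infinity, the lower bound forces $u^2\ln u^2\leq(C_0 v)^2\ln(C_0 v)^2$ on the tail, and integration yields $\int_{B_R}u^2\ln u^2\dx\leq-cR^2$ for some $c>0$. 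Since $\theta<0$, the term $-\theta\int\chi_R^2 u^2\ln u^2\dx$ grows at least quadratically in $R$, while the remaining pieces --- the finite $\mathcal{H}^1$-energy $\|\chi_R u\|_{\la_1}^2\leq\|u\|_{\la_1}^2$, the linear $(\la_1-\la)\int(\chi_R u)^2\dx$, the finite $\int u^{p+1}\dx$, and the cutoff error --- grow at most linearly in $R$. This rate mismatch gives the contradiction for $R$ large.

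The principal technical obstacles I anticipate are two: first, rigorously establishing the uniform pointwise decay $u(x)\to 0$ for $\mathcal{H}^1$-solutions and verifying the strict inequality between $\la_1$ and the Dirichlet eigenvalue of $-\Delta_{\bn}$ on the annulus (required for the comparison step in (a)); and second, controlling the cutoff error $\int u^2|\nabla_{\bn}\chi_R|^2\dx$ and the $\int u^{p+1}\dx$ term in part (b), which via H\"older estimates require that the $L^{2^{\star}}$-tail of $u$ on unit annuli decays fast enough to beat the exponential growth of the hyperbolic volume---this should follow from $u\in L^{2^{\star}}(\bn)$ combined with a localized Moser estimate, the same ingredient that yields the initial pointwise decay.
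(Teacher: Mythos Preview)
Your comparison argument in part (a) contains a genuine gap at the maximum-principle step. You correctly observe that $\la_1$ lies strictly below the first Dirichlet eigenvalue of $-\Delta_{\bn}$ on any bounded domain $\Omega\subset\bn$, but the conclusion you draw from this is the wrong one. What that spectral inequality guarantees is the \emph{sign-propagation} form of the maximum principle: if $(-\Delta_{\bn}-\la_1)w\leq 0$ in $\Omega$ and $w\leq 0$ on $\partial\Omega$, then $w\leq 0$ in $\Omega$. It does \emph{not} give the ``maximum on the boundary'' form $\max_{\Omega}w\leq\max_{\partial\Omega}w^+$. For an operator $-\Delta-c$ with $c>0$ this latter statement is false even when $c<\la_1^D(\Omega)$; a one-dimensional example is $-w''-cw=0$ on $(0,L)$ with $w(0)=w(L)=1$ and $0<c<(\pi/L)^2$, whose interior maximum $1/\cos(\sqrt{c}\,L/2)$ blows up as $c\uparrow(\pi/L)^2$. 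In your setting you cannot arrange $w=C_0v-u\leq 0$ on the outer sphere $\{d=R\}$, because you have no a~priori lower bound on $u$ there---that lower bound is exactly what you are trying to prove. So letting $R\to\infty$ in $\max w\leq C_0v(R)$ is not justified.

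This is precisely the obstruction flagged in the paper's introduction (``the comparison principle fails for such operator''), and the paper's proof is engineered to bypass it: one works with the $H^1$-subsolutions $v_\epsilon(\rho)=(\sinh(\rho/2))^{-(N-1+\epsilon)}$, applies Picone's inequality with the test functions $\eta\,w_\epsilon^{-1}(w_\epsilon^2-u^2)^+$ and $\eta\,u^{-1}(w_\epsilon^2-u^2)^+$, and uses a Caccioppoli-type estimate to show that the boundary terms on $\{R<d<R+1\}$ vanish as $R\to\infty$ \emph{in an integrated sense}, which does not require controlling $w$ pointwise on the outer sphere. For the $\mathcal{H}^1$ case, the paper does not attempt Moser iteration at all; instead it tests the equation against a nonnegative approximating sequence $\phi_n\to u$ in $\mathcal{H}^1$ and extracts from the sign of the logarithmic term that $u^2\ln(u^2/\delta^2)\in L^1(\bn)$, hence $u\in L^2(\bn)$ and therefore $u\in H^1(\bn)$. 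The lower bound of part (a) then immediately contradicts $u\in L^2(\bn)$, giving (b) in one line---considerably simpler than your growth-mismatch scheme, which in any case inherits the gap from (a) and has its own loose ends (the ``at most linear'' growth of $\int u^2|\nabla_{\bn}\chi_R|^2\dx$ and of $\int\chi_R^2 u^{p+1}\dx$ is not controlled by $u\in L^{2^\star}$ alone, since H\"older against the annulus volume produces exponential, not polynomial, factors).
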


The expression $d(0,x)$ in Theorem \ref{main2}(a) stands for the hyperbolic distance between $0$ and $x$
(see Section \ref{prelim}).
In addition to the above non-existence results, we show in Section \ref{nonexistence} that even there does not exist a positive solution satisfying a ``reasonable" asymptotic decay at infinity.

\medskip

Before proceeding further, let us first review the precedent related works in the Euclidean and the hyperbolic space. 
 
\medskip

The significant research in this field began with the influential work of Br\'ezis and Nirenberg \cite{BN-1983} in 1983. In their work, they demonstrated that when $\theta = 0$, the problem \eqref{LSE} with $p = 2^{\star} - 1$ on a bounded domain $\Omega$ in $\mathbb{R}^N$ with Dirichlet boundary data admits a positive solution if $\lambda < \lambda_1(\Omega)$, where $\lambda_1(\Omega)$ is the first Dirichlet eigenvalue of $-\Delta$ on $\Omega$. 
The arguments presented by Br\'ezis and Nirenberg had taken inspiration from Aubin's work on Yamabe's problem \cite{AubinY}. Due to the extensive literature in this area, it is  out of our scope to mention all of them. For a further discussion on Yamabe problem and related topics, we refer to the citations \cite{Ya,TruY,AubinY, Sch,U, Taubes1, Taubes2, Struwebook}, subsequent related works and the monographs by Aubin \cite{AubinBook} and by A. Malchiodi \cite{Andrea}.

\medskip

Br\'ezis and Nirenberg \cite{BN-1983} also examined the existence of positive solutions to a perturbed problem. Further related developments have appeared in Adimurthi et al. \cite{Adi} and in Dutta \cite{Ramya}.
 Nevertheless, their assumptions regarding the perturbed problem do not encompass the log-type perturbation considered here due to the sub-linear growth at the origin. 
\medskip

The case where $\theta \neq 0$ has been recently studied by Deng et al. \cite{DPS-2021, DHPZ-2023} and obtained several existence and non-existence results. Regarding the same problem on whole space $\R^N$,  the existence of positive ground state solutions and least energy sign-changing solutions are also affirmative for $\theta >0$ \cite{DPS-2021}.
 
 \medskip
 
One of the key concepts from the work of \cite{BN-1983} demonstrates that the corresponding Euler-Lagrange functional is compact below a certain energy threshold, leading to the existence results. 
However, some hidden complexities arise associated with a log-type perturbation, which we shall now describe. 
First of all, since the associated energy functional corresponding to $\theta\neq 0$ is not $C^1$ when considered as a functional on $H^1(\bn)$ (with appropriate integrability assumptions), we cannot apply the classical theory of critical point directly. An early development in this direction appeared for the study of time-dependent logarithmic Schr\"odinger equation 
$$
\iota\partial_tu + \Delta u + \theta u\ln u^2 = 0
$$
in $\rn.$
There are several remedies in the literature to address this issue. In \cite{CT-1983}, Cazenave worked in a suitable Orlicz space endowed with a Luxemburg type norm to make the functional well-defined and $C^1$ smooth. In \cite{SS-2015}, by applying non-smooth critical point theory for lower semi-continuous functionals, Squassina
and Szulkin studied the following logarithmic Schr\"odinger equation:
\begin{align}\label{surveyeqn1}
-\De u+V(x)u=Q(x)u\ln u^2\quad  \mbox{in} \ \R^N,
\end{align}
where $V(x)$ and $Q(x)$ are spatially periodic. They showed that a positive ground-state solution exists. Moreover, they demonstrated that infinitely many high-energy solutions exist, which are geometrically distinct under $\mathbb{Z}^N$-action. 

\medskip

On the other hand, using a penalization technique, Tanaka and Zhang \cite{TZ-2017} obtained infinitely many
multi-bump geometrically distinct solutions of equation \eqref{surveyeqn1}. The authors first penalized the nonlinearity around the origin, then by considering the spatially $2L$-periodic problems ($L>>1$), proved the existence of infinitely many multi-bump geometrically distinct solutions for the modified equation. Here, we adopt the direct approach of constrained minimization considered by Shuai \cite{S-2019}, who investigated the existence and nonexistence of positive ground state solution, least energy sign-changing solution, and infinitely many nodal solutions for equation \eqref{surveyeqn1} with $Q(x)\equiv 1$ under different types of potentials $V$.  We also refer to the references \cite{DMS-2014,JS-2016,GLN-2010,ZW-2020} for related works.

\medskip

To our knowledge, only the case $\theta = 0$ has been studied in the hyperbolic space. This topic was pioneered by Mancini and Sandeep \cite{MS-2008}, who proved the existence of a positive solution in $\mathcal{H}^1(\bn)$ if 
 \begin{align*}
\mbox{{(\bf H1)}} \ \ \ \ \ \ \ \ \ \ \ \ \ \ \ \ \ \ \
\begin{cases}
\lambda \leq \frac{(N-1)^2}{4}, \ \ \ \mbox{when} \ 1 < p < \frac{N+2}{N-2}, \ \mbox{and} \ N \geq 3, \\ \\
\frac{N(N-2)}{4} < \lambda \leq \frac{(N-1)^2}{4}, \ \ \ \mbox{when} \  p = \frac{N+2}{N-2}, \ \mbox{and} \ N \geq 4, \ \ \ \ \ \ \ \
\end{cases}
\end{align*}
 holds. Moreover, when $\la < \la_1$ the solution is in $H^1(\bn),$ otherwise, it is in $\mathcal{H}^1(\bn) \backslash H^1(\bn)$. In addition, Ganguly and Sandeep \cite{DebdipS} confirmed that for $p = \frac{N+2}{N-2}$ and $\la \leq \frac{N(N-2)}{4},$
 \eqref{LSE} with $\theta = 0$ does not even admit a non-trivial solution. 
 Our main theorem states that when $\theta < 0$, there is no positive solution even in $\mathcal{H}^1(\bn)$,
 irrespective of the values of $\lambda.$ Regarding the sub-critical case, the authors of \cite{Bonf12} discussed the classification of radial solutions (not necessarily finite energy) and their qualitative behavior such as positivity, number of zeroes and asymptotic behavior at infinity in terms of the initial value. See also \cite{San_a, San_b, San_c, Bhakta} for related works on $\bn$ corresponding to $\theta = 0.$

 \medskip
 
 Before concluding the introduction let us remark that the log-type perturbation is not merely a technical hypothesis, it has physical meaning as well. For example, the time dependent logarithmic Schrodinger equation
 \begin{align}\label{TimeLSE}
 \iota\frac{\partial\psi}{\partial t}=D\Delta \psi+\sigma\ln(|\psi|^2)\psi,
 \end{align} 
 where D being the diffusion constant and
$\sigma \in\R \setminus \{0\}$ representing the strength of the (attractive or repulsive) nonlinear interaction, finds its applications to quantum mechanics, quantum optics, nuclear physics, transport and diffusion phenomena, open quantum systems, effective quantum gravity, theory of superfluidity and Bose-Einstein condensation. See \cite{Z-2010} and the references therein for physical motivation. Various meaningful physical interpretations have been given to the presence of the logarithmic potential in
the Schrödinger equation. Indeed, it can be understood as the effect of statistical uncertainty or as the potential energy
associated with the information encoded in the matter distribution described by the probability density $|\psi(t, x)|^2$. Recently, equation \eqref{TimeLSE} has proved useful for the modelling of several nonlinear phenomena including capillary fluids \cite{ML-2003} and geophysical
applications of magma transport \cite{Martino2003}, as well as nuclear physics \cite{H-1985}, Brownian dynamics or photochemistry. Besides, one
of its most relevant potential applications nowadays seems to concern the modelling of quantum dissipative interactions
between a particle ensemble and a thermal reservoir of phonons when a Fokker–Planck scattering mechanism comes into
play (see \cite{L-2004,L-2009}).

\medskip

The outline of this article is as follows. In Section \ref{prelim} we introduce and recall the necessary tools and terminologies. 
Earlier, we mentioned that if the constrained energy level is strictly less than a certain threshold, it leads to the compactness of the minimizing sequence and consequently leads to a solution. However, estimating the value of energy brings difficulties, especially for $N=4$, where the Aubin-Talenti bubbles (the Euclidean Sobolev extremizers) are not square integrable. We carry out these estimates in Section \ref{critical value}. In Section \ref{existence}, we prove the existence of positive ground states for $\theta >0.$

\medskip

The main contribution of this article leans on the non-existence results for $\theta <0$. Due to the sign-changing behavior of $u\ln u^2$, Derrick-Pohozaev's identity \cite{Poho} does not provide satisfactory results for this case. The first eigenfunction method as Deng et.al.\cite{DHPZ-2023} have implemented in bounded domains of $\mathbb{R}^N$,  is also not applicable in this context because the first eigenfunction of the Laplace-Beltrami operator $-\Delta_{\bn}$ is not square integrable. We can overcome this challenge by deriving a lower asymptotic decay of the solutions in the regime of $\theta<0$ and $\lambda\in \mathbb{R}$. 

\medskip

Roughly the idea is as follows: clubbing the terms $(\la + \theta \ln u^2)u$ and treating it as a linear term, we 
could speculate from the work of \cite{MS-2008} that a positive $H^1$-solution should behave like $(1 - |x|^2)^{\frac{N-1}{2}}$ at infinity. However, making this precise brings additional difficulties. A natural approach would be to construct a suitable barrier. Since the given solution $u$ is a supersolution to $-\Delta_{\bn} u - \la_0 u \geq 0$ for any $\la_0,$ outside a large ball, all we need is a sub-solution $v \in H^1(\bn)$  to $-\Delta_{\bn} v - \la_0 v \leq 0$ satisfying the necessary decay assumption. Unfortunately, such a sub-solution exists only if $\la_0 > \la_1$ and hence the comparison principle fails for such operator $-\Delta_{\bn} - \la_0 .$ Nevertheless, we were able to circumvent this difficulty and prove the desired lower asymptotic decay of positive solutions. 

\medskip

In addition, using suitable interaction estimates, we have demonstrated that there is no positive classical solution with a reasonable asymptotic decay. Section \ref{nonexistence} is devoted to all the non-existence results obtained in this article.
 
\section{Preliminaries}\label{prelim}

\subsection{Notations} Throughout the article, we write $A \lesssim B$ to mean that there exists a constant C (depending on the natural parameters $N,p,\la,\theta$) such that $A \leq CB.$ $A \gtrsim B$ is similarly defined. We write $A \approx B$ if both $A \lesssim B$ and $A \gtrsim B$ hold. If we write $A \lesssim_{\delta} B,$ then this would mean the constant $C$ also depends on $\delta.$ The notation $\psi^+$ denotes the positive part $\max\{\psi,0\}.$ Similarly, $\psi^- = \max\{-\psi, 0\}.$

\subsection{The ball model} We briefly introduce the necessary concepts and refer to \cite{Ratcliffe} for more details. 
The Euclidean unit ball $B^N:= \{x \in \mathbb{R}^N: |x|^2<1\}$ equipped with the Riemannian metric
\begin{align*}
{\rm d}s^2 = \left(\frac{2}{1-|x|^2}\right)^2 \, {\rm d}x^2
\end{align*}
constitute the ball model for the hyperbolic $N$-space, where ${\rm d}x^2$ is the standard Euclidean metric and $|x|^2 = \sum_{i=1}^Nx_i^2$ is the standard Euclidean length. The volume element $\dx$ 
is given by  $\left(\frac{2}{1-|x|^2}\right)^N \ dx,$ $dx$ being the Lebesgue measure.

\medskip

 The hyperbolic distance between two points $x$ and $y$ in $\bn$ will be denoted by $d(x, y).$ The  distance between $x$ and the origin can be computed explicitly  by the formula
\begin{align*}
\rho := \, d(x, 0) = \int_{0}^{|x|} \frac{2}{1 - s^2} \, {\rm d}s \, = \, \log \left(\frac{1 + |x|}{1 - |x|}\right),
\end{align*}
and therefore  $|x| = \tanh \frac{\rho}{2}.$ More generally, one can compute the hyperbolic distance between any two points $x, y \in \bn$ and it is given by 
\begin{align*}
\cosh d(x, y) =  \left( 1 + \dfrac{2|x - y|^2}{(1 - |x|^2)(1 - |y|^2)} \right), \ \mbox{or}, \
\sinh \left(\frac{d(x,y)}{2} \right)= \frac{|x-y|}{\sqrt{(1-|x|^2)(1-|y|^2)}}.
\end{align*}
Geodesic balls in $\bn$ of radius $r$ centred at $x \in \bn$ will be denoted by 
$$
B_r(x) : = \{ y \in \bn : d(x, y) < r \},
$$
  or simply by $B_r$ if $x=0.$
  
 For $b \in \mathbb{B}^N,$ the hyperbolic translation $\tau_{b}: \mathbb{B}^N \rightarrow \mathbb{B}^N$ that takes $0$ to $b$ is defined by  the following formula
  \begin{align} \label{hyperbolictranslation}
  \tau_{b}(x) := \frac{(1 - |b|^2)x + (|x|^2 + 2 x \cdot b + 1)b}{|b|^2|x|^2 + 2 x\cdot b  + 1}.
 \end{align}
 It turns out that $\tau_{b}$ is an isometry, and together with the orthogonal transformations they form the M\"obius group of $B^N$ (see \cite{Ratcliffe}, Theorem 4.4.6 for details and further discussions on isometries). 

\subsection{Framework}

The solutions of \eqref{LSE} are the critical points of
\begin{align*}
J_p(u)=\frac{1}{2}\inth |\nabla_{\bn}u|_g^2\dx-\frac{\la}{2}\inth u^2\dx -\frac{1}{p+1}\|u\|^{p+1}_{p+1}-\frac{\theta}{2}\inth u^2(\ln u^2-1)\dx
\end{align*}
defined on appropriate function space defined below.
We define
\begin{align*}
H_r^1(\bn)=\{ u \in H^1(\bn)| \ u \text{ is radial}\}.
\end{align*}

Because of the infinite volume, 
the log term in the expression of $J_p$ does not make sense in $H^1(\bn).$ Hence we need to introduce the following subspace of $H^1(\bn)$ 
\begin{align*}
X=\{u\in H^1_r(\bn)\setminus \{0\} \ | \ u^2\ln u^2\in L^1(\bn)\}.
\end{align*}
Clearly $X$  is dense in $H^1_r(\bn),$ since $H^1_r(\bn)\cap C_c^\infty(\bn)\setminus\{0\}$ is contained in $X$. The existence of a positive solution will be obtained by constrained minimization on the Nehari set $\mathcal{N}_p =\{u \in X \ | \ I_p(u)=0\}$
where 
\begin{align*}
I_p(u)= \inth |\nabla_{\bn}u|_g^2\dx -\la\inth u^2\dx-\inth |u|^{p+1}\dx -\theta \inth u^2 \ln u^2\dx .
\end{align*}

We denote the critical value
\begin{align*}
d_p= \inf_{u\in \mathcal{N}_p} J_p(u).
\end{align*} 
The natural plan is to show $d_p$ is attained and the minimizer is a solution in the sense of Definition \ref{wf}. Moreover, thanks to the integrability of $u^2\ln u^2,$ the weak formulation holds for all $\phi \in X.$ It is worth mentioning that neither the space $X$ is a Banach space with respect to the $H^1$-norm, nor the functional $J_p$ is of class $C^1,$ wherever they are defined.

 \subsection{Basic Inequalities}
 For the convenience of the reader, we gather well-known inequalities required in this article in the next two subsections.
 
 \medskip
 
 \noindent
{\bf $\bullet$ Sobolev inequality in $\rn$.} Let $N \geq 3$. There exists a best constant $S = S(\rn)$ such that
\begin{align}\label{Sobolev}
S\bigg(\intr u^{2^\star} \ dx \bigg)^\frac{2}{2^\star}\leq \intr |\nabla u|^2 \ dx,
\end{align}
holds for all $u\in C_c^{\infty}(\rn),$ where $2^{\star}=\frac{2N}{N-2}$ is called the critical Sobolev exponent. By density argument, the inequality \eqref{Sobolev} continues to hold for all $u$  
satisfying $\|\nabla u\|_{L^2(\R^n)} <\infty,$ and $\mathcal{L}^n(\{|u| >t\}) < \infty$ for every $t>0,$ where $\|\cdot \|_{L^2(\R^n)}$ denotes the $L^2$-norm and $\mathcal{L}^n$ denotes the Lebesgue measure on $\R^n.$ The explicit value of $S$ is known \cite{Ro} and the equality cases in \eqref{Sobolev} are classified and given by Aubin-Talenti bubbles \cite{Aubin, Talenti}
$$
U[z,\mu] (x)=[N(N-2)]^{\frac{N-2}{4}}\mu^{\frac{N-2}{2}}\left(\frac{1}{1+\mu^2|x - z|^2}\right)^{\frac{N-2}{2}}, \ \  z \in \rn, \mu>0.
$$

\medskip

\noindent
{\bf $\bullet$ The Poincar\'e-Sobolev inequality.} Let $N \geq 3$ and $\lambda \leq \frac{(N-1)^2}{4}$ and $1 < p \leq 2^{\star} -1.$ Then  there exists a best constant $S_{\lambda,p}:=S_{\lambda,p}(\bn)>0$ such that 
\begin{align}\label{S-inq}
S_{\lambda,p}\left(~\int \limits_{\bn}|u|^{p+1} \, \dx \right)^{\frac{2}{p+1}}\leq\int \limits_{\bn}\left(|\nabla_{\bn}u|^{2}-\lambda u^{2}\right)\, \dx
\end{align}
holds for all  $u\in C_c^{\infty}(\bn).$ For $N=2$, the inequality holds for any $p>1$.

By density, \eqref{S-inq} continues to hold for every $u$ belonging to the the closure of $C_c^{\infty}(\bn)$ with respect to the norm $\|u \|_{\lambda}.$

\medskip

The inequality \eqref{S-inq} was proved by Mancini and Sandeep in \cite{MS-2008} and in the same article, they also proved the existence of optimizers under appropriate assumptions on $N,\lambda$ and $p.$ In particular, they showed that under the hypothesis {\bf (H1)}, there always exists a strictly positive, radially symmetric and decreasing extremizer $\calu$
in $H^1(\bn)$ or in $\mathcal{H}^1(\bn),$ depending on the values of $\la.$ It is straightforward to verify that subject to an appropriate normalization 
the obtained extremizer is a positive solution to 
\begin{align}\label{eq1}
 -\Delta_{\bn}u-\lambda u=|u|^{p-1}u~ \ \ \ \  u \in H^{1}(\bn) \ \mbox{or} \ \mathcal{H}^1(\bn).
 \end{align}
 The equation \eqref{eq1} as well as the inequality \eqref{S-inq} is invariant under the conformal group of the ball model, which in this case coincides with the isometry group of the ball model and is generated by the hyperbolic translations $\tau_b, b\in \bn$ and orthogonal transformations. In \cite{MS-2008} Mancini and Sandeep also classified the positive solutions of \eqref{eq1}, which in turn provides the classification of the extremizers of \eqref{S-inq}. Their results are as follows: Under the assumptions {\bf (H1)} with $\la < \la_1$ the set 
 \begin{align*}
\mathcal{Z}_{0}:=\{\calu[b] := \mathcal{U}\circ\tau_{b}: b \in \bn\}
\end{align*}
consists of all the positive solutions to \eqref{eq1} and $c\mathcal{Z}_0, \ c \in \mathbb{R}\backslash \{0\}$ consists of all the nontrivial extremizers of \eqref{S-inq}. 

\medskip

\noindent
{\bf $\bullet$ The log-Sobolev inequality on $\bn$.}
 Let $N \geq 2.$ There exist constants $C_1$ and $C_2$ (depending only on $N$) such that for every $\epsilon>0$  the inequality
\begin{align*}
\inth u^2 \ln u^2\dx \leq \frac{\epsilon}{\pi}\|\nabla_{\bn} u\|_2^2+\|u\|_2^2\left(\ln\|u\|_2^2+C_1-C_2\ln\epsilon \right),
\end{align*}
holds for every $u\in H^1(\bn).$
\begin{proof}
First we assume $\|u\|^2=1$.
Since logarithm is a concave function, by Jenson's inequality, we have
\begin{align}\label{Jensen's Inequality}
\inth \ln (u^2) u^2\dx &= \frac{2}{p-1}\inth \ln (u^{p-1}) u^2\dx \nonumber \\
&\quad \leq \frac{2}{p-1}\ln \inth  u^{p+1}\dx.
\end{align}

Now we apply the  Poincar\'e-Sobolev inequality for particular values of $p$ in the two cases of $N=2$ and $N\geq 3$:

\medskip

Case 1: If $N=2,$ we choose $p=3$, then from \eqref{Jensen's Inequality} and \eqref{S-inq} we have 
\begin{align*}
\inth \ln (u^{2}) u^2\dx &\leq \ln \inth u^4 \dx \\
&\quad \leq 2\ln \left(S_{0,3}^{-1}\inth |\nabla_{\bn} u|_g^2\dx \right).
\end{align*}

\medskip

Case 2: If $N\geq 3$ we take $p=2^{\star}-1$, then from \eqref{Jensen's Inequality} and \eqref{S-inq} we have 
\begin{align*}
\inth \ln (u^{2}) u^2\dx &\leq \frac{2}{2^{\star}-2}\ln \inth u^{2^\star} \dx\\
&\quad\leq \frac{2^{\star}}{2^\star-2} \ln \left(S^{-1} \inth |\nabla_{\bn} u|_g^2 \dx\right)
\\ &\quad =\frac{N}{2} \ln \left(S^{-1} \inth |\nabla_{\bn} u|_g^2 \dx\right).
\end{align*}
Now, using $\ln (ax)\leq \epsilon x +\ln (a\epsilon^{-1})$, we get,
\begin{align}\label{Normalized log-sobolev}
\inth u^2 \ln u^2 \dx &\leq  \tilde{C_1} \ln\bigg( \tilde{C_2} \inth |\nabla_{\bn} u|_g^2\dx\bigg)\nonumber\\
&\leq \frac{\epsilon}{\pi} \|\nabla_{\bn} u\|_2^2+ \tilde{C_1}\ln\bigg(\tilde{C_2}\tilde{C_1}\pi\epsilon^{-1}\bigg)\nonumber\\
&= \frac{\epsilon}{\pi} \|\nabla_{\bn} u\|_2^2+ \bigg(C_1+C_2\ln\epsilon^{-1}\bigg).
\end{align}
The general case follows from \eqref{Normalized log-sobolev} by considering $\frac{u}{\|u\|_2^2}$ instead of $u$. 
\end{proof}

 \subsection{Preliminary Results}
 We now state a few intermediate lemmas required for the proof of existence results.
 
\begin{Lem} \label{ex}
Let $p\in(1,2^{\star}-1], \theta \geq 0$ and assume that $u\in \mathcal{N}_p$ and  $J_p(u)=d_p >0.$ Then $u$ is a positive solution to \eqref{LSE}.
\end{Lem}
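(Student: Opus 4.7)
I would follow the standard Nehari-manifold scheme, adapted in two places: the functional $J_p$ is not $C^1$ on $X$, and the minimisation takes place on the radial subspace $H^1_r(\bn)$. The argument splits into three parts: (i) replace $u$ by $|u|$ so that $u\geq 0$; (ii) show via a projection-onto-$\mathcal{N}_p$ argument that $u$ is a weak solution, first against radial test functions and then against all test functions by rotation invariance; (iii) upgrade $u\geq 0$ to $u>0$ via elliptic regularity and the strong maximum principle.

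\textbf{Fibering and projection.} All terms defining $I_p$ and $J_p$ depend on $u$ only through $|u|$, $|\nabla u|^2$, and $u^2\ln u^2$, so $I_p(|u|)=I_p(u)=0$ and $J_p(|u|)=J_p(u)=d_p$, and we may assume $u\geq 0$. For $0\neq v\in X$ and $t>0$ the fibering map $g_v(t):=J_p(tv)$ satisfies $g_v'(t)=I_p(tv)/t$ (using the identity $\int (tv)^2\ln(tv)^2\,\dx=t^2\int v^2\ln v^2\,\dx+t^2\ln t^2\,\|v\|_2^2$); differentiating once more and using $I_p(v)=0$ when $v\in\mathcal{N}_p$ yields
\begin{equation*}
g_v''(1)=-(p-1)\|v\|_{p+1}^{p+1}-2\theta\|v\|_2^2<0,
\end{equation*}
so $t=1$ is a strict local maximum of $g_u$. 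Now fix a radial $\phi\in C_c^{\infty}(\bn)$ and set $F(s,t):=J_p(t(u+s\phi))$. A dominated-convergence argument (using that $(u+s\phi)^2\ln(u+s\phi)^2$ is bounded on $\mathrm{supp}\,\phi$ uniformly in small $s$ and coincides with $u^2\ln u^2\in L^1(\bn)$ off the support) shows $F\in C^1$ near $(0,1)$ with $\partial_tF(0,1)=I_p(u)=0$ and $\partial_t^2F(0,1)=g_u''(1)<0$, so the classical two-variable implicit function theorem produces $s\mapsto t(s)$ with $t(0)=1$ and $t(s)(u+s\phi)\in\mathcal{N}_p$ for all small $s$.

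\textbf{Weak equation and positivity.} Since $u$ minimises $J_p$ on $\mathcal{N}_p$ we have $J_p\bigl(t(s)(u+s\phi)\bigr)\geq d_p$, so the full derivative in $s$ at $0$ vanishes. Using $t(0)=1$ together with $\partial_tF(0,1)=0$, the chain rule collapses it to $\partial_sF(0,1)=0$, i.e.
\begin{equation*}
\int_{\bn}\langle\nabla_{\bn}u,\nabla_{\bn}\phi\rangle_g\,\dx-\lambda\int_{\bn}u\phi\,\dx-\int_{\bn}u^p\phi\,\dx-\theta\int_{\bn}u\phi\ln u^2\,\dx=0
\end{equation*}
for every radial $\phi\in C_c^{\infty}(\bn)$; averaging the identity over $SO(N)$ and invoking the radiality of $u$ (the principle of symmetric criticality) extends it to all $\phi\in C_c^{\infty}(\bn)$. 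Hence $u$ is a weak solution in the sense of Definition~\ref{wf}, so local elliptic regularity yields $u\in C^2(\bn)$ (note $t\mapsto t\ln t^2$ is continuous up to $t=0$, so the right-hand side of the equation is locally bounded), and the strong maximum principle forces $u\equiv 0$ or $u>0$; the alternative $u\equiv 0$ is excluded by $d_p>0$.

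\textbf{Main obstacle.} The delicate point is the projection step, because $J_p$ is not $C^1$ on $X$ (nor is $X$ a Banach space), so no abstract Lagrange multiplier theorem applies directly. I would bypass this entirely by reducing everything to the scalar function $F(s,t)$ of two real variables, where the log-integrand is manageable: a uniform dominating function of the form $\chi_{\mathrm{supp}\,\phi}\sup_{s}\bigl|(u+s\phi)\phi(\ln(u+s\phi)^2+1)\bigr|+u^2\ln u^2$ legitimises differentiation under the integral sign, and the scalar implicit function theorem takes care of the rest. The remaining tasks (the sign reduction and the maximum-principle step) are then routine.
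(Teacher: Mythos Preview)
Your argument is correct but follows a genuinely different route from the paper's.

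The paper (following Shuai) first observes, via symmetric-decreasing rearrangement, that the radial infimum $d_p$ actually coincides with the infimum over the full space $W=\{u\in H^1(\bn):u^2\ln u^2\in L^1\}$. It then argues by contradiction with a \emph{deformation} rather than an implicit function theorem: assuming $J_p'(u)\phi\le -1$ for some (not necessarily radial) $\phi\in C_c^\infty(\bn)$, it introduces a cutoff $\eta(t)$ and considers the continuous path $t\mapsto tu+\varepsilon\eta(t)\phi$; the intermediate value theorem on $t\mapsto I_p(tu+\varepsilon\eta(t)\phi)$ then produces a competitor in the Nehari set with strictly smaller energy. No differentiability of the fibering map is required, only continuity---this is precisely the device that sidesteps the lack of $C^1$ smoothness of $J_p$.

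Your scheme---reduce to the scalar map $F(s,t)=J_p(t(u+s\phi))$, apply the two-variable implicit function theorem to $\partial_tF=0$, and afterwards pass from radial test functions to all test functions via symmetric criticality---is more classical and arguably cleaner. Two small points are worth tightening. First, you assert $F\in C^1$, but the implicit function theorem is being applied to $\partial_tF(s,t)=0$, so what you actually need is $\partial_tF\in C^1$ near $(0,1)$; your dominated-convergence set-up does deliver this (the $s$-derivative of the log contribution only involves $\int v_s\phi(\ln v_s^2+1)$, which is uniformly bounded on $\mathrm{supp}\,\phi$), but you should say so explicitly. Second, the ``strong maximum principle'' you invoke is not the classical one, since $t\mapsto\theta t\ln t^2$ is not Lipschitz at $0$; the paper appeals to V\'azquez's version, and the Osserman-type divergence $\int_0^\varepsilon\bigl(-\theta s^2\ln s\bigr)^{-1/2}ds=\infty$ is exactly what makes it applicable.

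In short: the paper's deformation-plus-rearrangement approach is more robust to non-smoothness and yields the weak formulation against non-radial $\phi$ without a separate symmetric-criticality step; your IFT approach is more direct once the scalar regularity is checked, and symmetric criticality neatly replaces the rearrangement.
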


Thanks to the above lemma, we now only require to prove that $d_p$ is achieved. The proof of the lemma follows exactly as in Shuai \cite[Theorem 1.1]{S-2019}. For the convenience of the reader we include the details at the end of this section. 
We need a few technical lemmas for the proof of Lemma \ref{ex} and for subsequent uses.
 \begin{Lem}
 The followings hold:
 \begin{itemize}
 \item[(a)] $H^1_r(\bn)$ is compactly embedded in $L^q(\bn)$ for $q\in(2,2^{\star})$.
 \item[(b)]Let $u_n\in H^1_r(\bn)$ such that $\{u_n\}$ is bounded in $H^1_r(\bn)$ and $\{u_n\}$ is bounded in $L^s(\bn)$ for $s>2^{\star}$. Then there exists $u\in H^1_r(\bn)$ such that up to a subsequence $u_n\to u $ in $L^{2^{\star}}(\bn)$.
 \end{itemize}
\end{Lem}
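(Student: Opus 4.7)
The plan for part (a) is to adapt the classical Strauss lemma to the hyperbolic setting: combine Rellich--Kondrachov on hyperbolic geodesic balls with a pointwise decay estimate for radial $H^1$ functions to control the tail uniformly. Part (b) will then follow from (a) by a short interpolation against the higher integrability hypothesis.

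For the decay estimate, I would work in hyperbolic polar coordinates, where $\dx = c_N\,\sinh^{N-1}(\rho)\,d\rho\,d\sigma$ and $|\nabla_{\bn}u|_g = |u'(\rho)|$ for radial $u$. Setting $v(\rho) := u(\rho)\,\sinh^{(N-1)/2}(\rho)$, a direct differentiation together with the bound $\sinh^{N-3}(\rho)\cosh^{2}(\rho) \lesssim \sinh^{N-1}(\rho)$ on $[1,\infty)$ shows that $v \in H^1([1,\infty))$ with $\|v\|_{H^1([1,\infty))} \lesssim \|u\|_{H^1(\bn)}$. The one-dimensional Sobolev embedding $H^1([1,\infty)) \hookrightarrow L^\infty([1,\infty))$ then yields the key pointwise decay
\begin{equation*}
|u(\rho)| \lesssim \|u\|_{H^1(\bn)}\,\sinh^{-(N-1)/2}(\rho), \qquad \rho \geq 1.
\end{equation*}
Given a bounded sequence $\{u_n\} \subset H^1_r(\bn)$, I would extract a subsequence with $u_n \rightharpoonup u$ in $H^1_r(\bn)$ and, by Rellich--Kondrachov, $u_n \to u$ in $L^q(B_R)$ for every $R > 0$ and every $q \in (2, 2^\star)$. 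Raising the decay bound to the power $q-2$ and integrating gives
\begin{equation*}
\int_{\bn \setminus B_R} |u_n|^q \,\dx \;\lesssim\; \|u_n\|_{H^1(\bn)}^{q}\,\sinh^{-(N-1)(q-2)/2}(R),
\end{equation*}
which vanishes as $R \to \infty$ uniformly in $n$, and a standard three-$\varepsilon$ argument completes (a).

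For part (b), fix any $q \in (2, 2^\star)$. By (a), a subsequence converges in $L^q(\bn)$ to some $u \in H^1_r(\bn)$, and Fatou's lemma with the uniform $L^s$ bound yields $u \in L^s(\bn)$. Picking $\vartheta \in (0,1)$ with $\tfrac{1}{2^\star} = \tfrac{\vartheta}{q} + \tfrac{1-\vartheta}{s}$, the standard interpolation inequality
\begin{equation*}
\|u_n - u\|_{L^{2^\star}(\bn)} \;\leq\; \|u_n - u\|_{L^q(\bn)}^{\vartheta}\,\|u_n - u\|_{L^s(\bn)}^{1-\vartheta}
\end{equation*}
together with the uniform $L^s$ bound on $u_n - u$ forces $u_n \to u$ in $L^{2^\star}(\bn)$. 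The only genuinely subtle ingredient is the hyperbolic Strauss-type decay estimate for radial $H^1$ functions; once it is in place, everything else is a routine combination of local Rellich compactness, interpolation, and weak lower semicontinuity.
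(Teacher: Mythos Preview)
Your argument is correct, and for part (b) it is exactly the route the paper indicates (``follows from (a) and interpolation inequality''). For part (a) the paper does not give a proof at all but simply cites \cite{BS-2012}; your Strauss-type decay estimate $|u(\rho)|\lesssim \|u\|_{H^1(\bn)}\sinh^{-(N-1)/2}(\rho)$ combined with local Rellich compactness is precisely the standard proof one finds in such references, so you have supplied the details the paper omits rather than taken a different path.
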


The first one is quite standard, see for example \cite[Theorem 3.1]{BS-2012} for a proof.  $(b)$ follows from $(a)$ and interpolation inequality.

 \begin{Lem}
Given $\theta>0$ and $\la\in \mathbb{R}.$ The functional $$\mathcal{F}(u)=-\la\inth u^2\dx -\theta\inth u^2 \ln u^2 \dx$$ is weakly lower semicontinuous on $H^1_r(\bn)$.
 \end{Lem}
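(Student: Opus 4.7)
My plan is to decompose $\mathcal{F}$ into a pointwise-nonnegative piece (to which Fatou applies) plus a piece that is weakly continuous on $H^1_r(\bn)$. Concretely, fix any subcritical exponent $q\in(2,2^{\star})$ and choose a constant $C=C(\la,\theta,q)>0$ large enough so that the auxiliary function
\[
h(t):=-\la t^2-\theta t^2\ln t^2 + C t^q
\]
is nonnegative on $[0,\infty)$. With such a $C$ fixed, one then has the identity
\[
\mathcal{F}(u)=\int_{\bn}h(|u|)\,\dx - C\int_{\bn}|u|^q\,\dx,
\]
in which the first integral is well defined in $[0,+\infty]$ thanks to $h\geq 0$, while the second is finite by the continuous embedding $H^1(\bn)\hookrightarrow L^q(\bn)$.

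Verifying $h\geq 0$ is a routine one-variable calculus check: the quantity $\phi(t):=h(t)/t^2=Ct^{q-2}-2\theta\ln t-\la$ tends to $+\infty$ both as $t\to 0^+$ (thanks to $\theta>0$) and as $t\to\infty$ (thanks to $q>2$), so it attains its minimum at a unique interior critical point, and that minimum value tends to $+\infty$ as $C\to\infty$. A single $u$-independent choice of $C$ therefore forces $h\geq 0$ throughout $[0,\infty)$.

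Given $u_n\rightharpoonup u$ in $H^1_r(\bn)$, the compact embedding $H^1_r(\bn)\hookrightarrow L^q(\bn)$ from part (a) of the preceding lemma yields $u_n\to u$ strongly in $L^q(\bn)$ and, up to a subsequence, pointwise a.e. Fatou's lemma, applied to the nonnegative integrands $h(|u_n|)\to h(|u|)$, gives $\int_{\bn}h(|u|)\,\dx\leq\liminf_n \int_{\bn}h(|u_n|)\,\dx$, and strong $L^q$-convergence gives $C\int_{\bn}|u_n|^q\,\dx\to C\int_{\bn}|u|^q\,\dx$. Subtracting these two facts using the decomposition above produces the desired inequality $\mathcal{F}(u)\leq\liminf_n\mathcal{F}(u_n)$.

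The only substantive step is the initial pointwise bound $h\geq 0$, which crucially exploits the hypothesis $\theta>0$: this sign assumption is precisely what makes $-2\theta\ln t\to+\infty$ on the small-$t$ regime, so that the logarithmic singularity is already pointing in the ``right'' direction and only needs to be supplemented by a subcritical $Ct^q$ to handle the large-$t$ behaviour. Once this pointwise domination is in hand, the rest of the argument is a routine combination of Fatou with the radial compact embedding.
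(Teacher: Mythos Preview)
Your proof is correct. Both your argument and the paper's rest on the same two ingredients --- Fatou's lemma for a nonnegative piece and the compact radial embedding into $L^q$ for a strongly convergent piece --- but the decompositions differ. The paper splits $\mathcal{F}$ into four parts, separating $(u^2\ln u^2)^+$ from $(u^2\ln u^2)^-$ and further distinguishing the regions $\{|u|<\delta\}$ and $\{|u|\geq\delta\}$; it then applies generalized dominated convergence to $-\theta\int(u_n^2\ln u_n^2)^+$ and to $-\la\int_{\{u_n\geq\delta\}}u_n^2$, and Fatou to the remaining two pieces. Your single auxiliary function $h(t)=-\la t^2-\theta t^2\ln t^2+Ct^q$ accomplishes the same goal in one stroke: it absorbs both the $\la$-term and the sign-changing logarithm into a single nonnegative integrand, leaving only the subcritical $C\|u\|_q^q$ to be handled by strong convergence. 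The payoff of your route is a cleaner bookkeeping with no case analysis; the paper's version, while more fragmented, makes the role of the threshold $\delta$ (where $-\theta t^2\ln t^2-\la t^2$ changes sign) more explicit, which is occasionally useful elsewhere in the paper.
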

 \begin{proof}
  Let $u_n \rightharpoonup u$ in $H^1_r(\bn)$. Then up to a subsequence 
   $u_n\rightarrow u$ in $L^q(\bn), q \in (2,2^{\star}-1),$
and a.e. in $\bn.$ Note that $(u^2\ln u^2)^+\leq u^q$ for $q>2$. Moreover, there exists a small $\delta>0$ such that,
$-\theta u^2\ln u^2-\la u^2\geq 0$ whenever $|u|<\delta$ and for $q>2$, 
$ u^2< \delta^{2-q} |u|^q$ whenever $|u|\geq\delta$. 

  Hence by generalised dominated convergence theorem,
  \begin{align*}
  -\theta\inth (u_n^2\ln u_n^2)^+\dx&\rightarrow -\theta\inth(u^2 \ln u^2)^+\dx,\\
  -\la\int_{\{u_n>\delta\}} u_n^2\dx&\rightarrow -\la\int_{\{u>\delta\}}u^2\dx, 
  \end{align*}
   and by Fatou's Lemma,
  \begin{align*}
  \int_{\{u<\delta\}}(\theta (u^2\ln u^2)^- -\la u^2)\dx \leq \liminf\int_{\{u_n<\delta\}} (\theta (u_n^2\ln u_n^2)^- -\la u_n^2)\dx,\\
    \int_{\{u\geq \delta\}}\theta (u^2\ln u^2)^- \dx \leq \liminf\int_{\{u_n\geq\delta\}} \theta (u_n^2\ln u_n^2)^-\dx.
  \end{align*}
  Adding all the integrals we conclude the proof.
 \end{proof}
\begin{Cor}
For $\theta >0$ and $\la \in \mathbb{R},$ the functionals $J_p(u),\  
I_p(u)$ are weakly lower semicontinuous on $H_r^1(\bn)$ whenever $p\in(1,2^{\star}-1).$ Under the same assumptions $J_{2^\star-1}(u)$, $I_{2^\star-1}(u)$ are lower semicontinuous on $H^1_r(\bn).$ 
\end{Cor}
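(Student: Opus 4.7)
The plan is to decompose each functional into pieces whose (weak) lower semicontinuity has already been established, and then recombine. Using the notation $\mathcal{F}(u)=-\lambda\|u\|_2^2-\theta\int_{\bn} u^2\ln u^2\,\dx$ from the preceding lemma, and regrouping the perturbation $-\frac{\theta}{2}\int_{\bn} u^2(\ln u^2-1)\,\dx$ as $-\frac{\theta}{2}\int_{\bn} u^2\ln u^2\,\dx+\frac{\theta}{2}\|u\|_2^2$, one obtains
$$I_p(u)=\|\nabla_{\bn}u\|_2^2+\mathcal{F}(u)-\|u\|_{p+1}^{p+1},\qquad J_p(u)=\tfrac{1}{2}\|\nabla_{\bn}u\|_2^2+\tfrac{1}{2}\mathcal{F}(u)+\tfrac{\theta}{2}\|u\|_2^2-\tfrac{1}{p+1}\|u\|_{p+1}^{p+1}.$$

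For the subcritical range $p\in(1,2^{\star}-1)$, the squared $H^1$-seminorm $\|\nabla_{\bn}u\|_2^2$ is weakly lower semicontinuous as the square of a Hilbert-space norm, $\mathcal{F}$ is weakly lower semicontinuous by the preceding lemma, and $\frac{\theta}{2}\|u\|_2^2$ is weakly lower semicontinuous since $\theta>0$ and the $L^2$-norm is weakly lower semicontinuous under weak convergence in $H^1_r(\bn)$. Since $p+1\in(2,2^{\star})$, part (a) of the embedding lemma provides the compact embedding $H^1_r(\bn)\hookrightarrow L^{p+1}(\bn)$; hence along any weakly convergent sequence $u_n\rightharpoonup u$ in $H^1_r(\bn)$ one has $\|u_n\|_{p+1}^{p+1}\to\|u\|_{p+1}^{p+1}$, so the subtracted term is in fact weakly continuous. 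Since the sum of weakly lower semicontinuous functionals with a weakly continuous one remains weakly lower semicontinuous, both $I_p$ and $J_p$ are weakly lower semicontinuous on $H^1_r(\bn)$.

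For the critical case $p=2^{\star}-1$, compactness is lost and $-\|u\|_{2^{\star}}^{2^{\star}}$ is in general only weakly upper semicontinuous. However, the continuous Sobolev embedding $H^1_r(\bn)\hookrightarrow L^{2^{\star}}(\bn)$ guarantees that $u\mapsto\|u\|_{2^{\star}}^{2^{\star}}$ is continuous under strong convergence in $H^1_r(\bn)$. The remaining pieces in the above decompositions are weakly lower semicontinuous, hence a fortiori strongly lower semicontinuous. Adding a strongly continuous functional to strongly lower semicontinuous ones preserves strong lower semicontinuity, so $J_{2^{\star}-1}$ and $I_{2^{\star}-1}$ are (strongly) lower semicontinuous on $H^1_r(\bn)$, as asserted.

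The proof is essentially bookkeeping, and there is no real obstacle; the only points that require care are the extra $\frac{\theta}{2}\|u\|_2^2$ appearing in $J_p$ (but not in $I_p$), whose weak lower semicontinuity is the single place where the sign hypothesis $\theta>0$ enters, and the forced downgrade from weak to strong lower semicontinuity at the critical exponent $p+1=2^\star$, dictated by the loss of compactness in the Sobolev embedding.
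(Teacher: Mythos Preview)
Your argument is correct and is exactly the natural deduction intended by the paper, which states the result as a corollary without proof; the decomposition into $\|\nabla_{\bn}u\|_2^2$, $\mathcal{F}(u)$, the $L^{p+1}$-term (weakly continuous by compactness in the subcritical range, only strongly continuous at the critical exponent), and the extra $\frac{\theta}{2}\|u\|_2^2$ is precisely what the preceding two lemmas are set up to handle. One small inaccuracy in your closing commentary: the sign condition $\theta>0$ is \emph{not} used solely for the term $\frac{\theta}{2}\|u\|_2^2$; it is already essential in the preceding lemma establishing the weak lower semicontinuity of $\mathcal{F}$, so it enters the argument in two places, not one.
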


\begin{Lem}\label{Uniqueness}
Let $\theta>0$ and $u\in X$. Then there exists a unique $t_0>0$ such that $I_p(t_0 u)=0$ for $p\in(1,2^{\star}-1]$.
\end{Lem}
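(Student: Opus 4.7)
The plan is to reduce to a one-variable analysis by fixing $u\in X$ and studying the scalar function $t\mapsto I_p(tu)$ for $t>0$. Using $\ln(t^2u^2)=2\ln t+\ln u^2$, I would expand
\begin{align*}
I_p(tu) = t^2\!\left[\,\|\nabla_{\bn} u\|_2^2 -\la\|u\|_2^2 -\theta\!\inth u^2\ln u^2\dx -2\theta(\ln t)\|u\|_2^2\right] - t^{p+1}\|u\|_{p+1}^{p+1}.
\end{align*}
All integrals here are finite: $u\in H^1_r(\bn)$ handles the Dirichlet and $L^2$ terms, $u\in X$ handles the logarithmic term, and the Poincar\'e--Sobolev inequality \eqref{S-inq} controls $\|u\|_{p+1}$ for the allowed range of $p$. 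Since $I_p(t_0u)=0$ with $t_0>0$ is equivalent to $g(t_0)=0$ where
\begin{align*}
g(t):=t^{-2}I_p(tu) = \|\nabla_{\bn} u\|_2^2 -\la\|u\|_2^2 -\theta\!\inth u^2\ln u^2\dx -2\theta(\ln t)\|u\|_2^2 - t^{p-1}\|u\|_{p+1}^{p+1},
\end{align*}
the problem becomes finding a unique positive root of $g$.

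Next I would analyze the asymptotics and monotonicity of $g$. Because $\theta>0$ and $u\not\equiv 0$ (so $\|u\|_2>0$), the term $-2\theta(\ln t)\|u\|_2^2$ tends to $+\infty$ as $t\to 0^+$, which dominates the bounded remaining terms (note $t^{p-1}\|u\|_{p+1}^{p+1}\to 0$); hence $g(t)\to +\infty$. As $t\to +\infty$, both $-2\theta(\ln t)\|u\|_2^2$ and $-t^{p-1}\|u\|_{p+1}^{p+1}$ diverge to $-\infty$, so $g(t)\to -\infty$. By continuity, $g$ has at least one zero on $(0,\infty)$.

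For uniqueness I would differentiate, obtaining
\begin{align*}
g'(t) = -\frac{2\theta}{t}\|u\|_2^2 - (p-1)\,t^{p-2}\|u\|_{p+1}^{p+1},
\end{align*}
which is strictly negative for every $t>0$ because $\theta>0$, $p>1$, and $u\not\equiv 0$. Thus $g$ is strictly decreasing on $(0,\infty)$, and the zero $t_0$ is unique.

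There is no real obstacle here; the only point that requires a brief verification is that each term in $I_p(tu)$ is finite for $u\in X$ and $p\in(1,2^\star-1]$, which is immediate from the definition of $X$ together with the Sobolev embedding $H^1(\bn)\hookrightarrow L^{p+1}(\bn)$. The crucial structural feature being exploited is that the logarithmic perturbation contributes a purely $t^2\ln t$-type term after rescaling, which produces the correct sign at $0$ and $\infty$ exactly when $\theta>0$, and which gives a strictly monotone derivative without competing with the $t^{p+1}$-term.
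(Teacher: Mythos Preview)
Your proof is correct and follows essentially the same approach as the paper: after dividing $I_p(tu)=0$ by $t^2$, the paper rewrites the resulting identity as a constant left-hand side equal to $2\theta(\ln t)\|u\|_2^2 + t^{p-1}\|u\|_{p+1}^{p+1} + \theta\int u^2\ln u^2\,\dx$ and observes that this right-hand side is strictly increasing in $t$, which is exactly your statement that $g'(t)<0$. Your version is slightly more explicit about the endpoint asymptotics and the finiteness of each term, but the argument is the same.
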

\begin{proof}
For $t>0$,  $I_p(tu)=0$ is equivalent to 
  \begin{align*}
  \|\nabla_{\bn}u\|_{2}^2 - \la \|u\|_2^2&=\frac{1}{t}\bigg(2 t\ln t \bigg(\theta\inth u^2\dx\bigg)+t^p\inth u^{p+1}\dx+\theta t\inth u^2\ln u^2\dx\bigg).
  \end{align*}
  Equivalently,
  \begin{align*}
  \|\nabla_{\bn}u\|_{2}^2 - \la \|u\|_2^2=2\theta \ln t\inth u^2 \dx+ t^{p-1}\inth u^{p+1}\dx+\theta \inth u^2\ln u^2\dx.
  \end{align*}
 The R.H.S. term is strictly increasing in $t$ for $t>0$ whereas the L.H.S. is a constant  and hence there exists a unique $t_0$ such that the above equality holds. 
\end{proof}

\medskip

\noindent
{\bf Proof of Lemma \ref{ex}.}

\begin{proof}

First by symmetrization, we note that $d_p=\displaystyle\inf_{u\in W, \ I_p(u)=0}J_p(u)$ where $W=\{u\in H^1(\bn) \ | \ u\ln u^2\in L^1(\bn)\}.$ Indeed, let $u^{\star}$ be the symmetric decreasing rearrangement of $u.$ Then $I_p(u^{\star})\leq 0.$ By Lemma \ref{Uniqueness} there exists $t \in (0,1]$ such that $I_p(tu^{\star})= 0.$ Then 
\begin{align*}
J_p(tu^{\star}) = J_p(tu^{\star}) - \frac{1}{2}I_p(tu^{\star})&=\frac{t^2\theta}{2}\|u^{\star}\|_{2}^2 + \frac{t^{p+1}}{p+1}\|u^{\star}\|_{p+1}^{p+1}\\
& \leq \frac{1}{2}\|u\|_{2}^2 + \frac{1}{p+1}\|u\|_{p+1}^{p+1} = J_p(u).
\end{align*}

Suppose $u \in W$ be a minimizer. Assume that $J_p^{\prime}(u) \neq 0,$ and let $\phi\in C_c^{\infty}(\bn)$ such that $J_p'(u)\phi\leq-1,$ where  $J_p^{\prime}(u) \phi$ needs to be understood in the sense of Definition \ref{wf}.
Now observe that for the above fixed $\phi\in C_c^{\infty}(\bn)$ and  for all $t >0,\sigma \in \R $, we have
\begin{align*}
& \ \ \ \ J_p'(tu+\sigma\phi)\phi \\
&\leq J_p'(u)\phi +|t-1||\langle u,\phi\rangle_{\la}|+|\sigma| \|\phi\|_{\la}^2+|p\sigma|\inth (|tu|^{p-1}+|\sigma \phi|^{p-1})\phi^2 \dx \\ 
&\ \ \ + |t^p-1| \inth |u|^{p} |\phi|\dx + 2|\sigma|\inth \phi |\ln(|u|+|\phi|)|(|u|+|\phi|\dx\\
&\ \ \ +2|t-1|\inth|\phi u\ln u|\dx + |t\ln t|\inth |u \phi|\dx.
\end{align*}
Hence, there exists $\epsilon>0$ such that whenever $|t-1|<\epsilon$, $|\sigma|\leq \epsilon$ we have
\begin{align}\label{111}
J_p'(t u+\sigma \phi)\phi \leq J_p'(u)\phi +\frac{1}{2}\leq-\frac{1}{2}.
\end{align}
Now define $\eta \in C_c^{\infty}(\bn)$ such that $0\leq \eta \leq 1$, and
\begin{align*}
\eta(t)= 
\begin{cases}
1 &\quad|t-1|\leq \frac{\epsilon}{2}\\
0 &\quad|t-1|>\epsilon.
\end{cases}
\end{align*}

Set $g(t)=I_p(tu+\epsilon\eta(t)\phi)$ for  $t>0$.
By Lemma \ref{Uniqueness} we know that $I_p(tu)>0$ if $0<t<1$ and $I_p(tu)<0$ if $t>1$. 
Therefore $g(1-\epsilon)>0$ and $g(1+\epsilon)<0$.  
By continuity of $g$, there exists a $t_0\in(1-\epsilon,1+\epsilon)$ such that $g(t_0)=0$. Unwrapping the definition of $g$, we get
\begin{align*}
I_p(t_0u+\epsilon \eta(t_0)\phi)=0.
\end{align*}
Therefore $(t_0 u+\eta(t_0)\phi)\in  \mathcal{N}_p.$ The same argument yields $J_p(t_0u)\leq J_p(u)$ and hence by \eqref{111}, $J_p(t_0u+\epsilon\eta(t_0)\phi)=J_p(t_0u)+\eta(t_0)\epsilon \int_0^1\langle J'_p(t_0u+s\eta(t_0)\epsilon\phi),\phi\rangle ds\leq J_p(t_0u)-\eta(t_0)\frac{\epsilon}{2}.$ As a result we have
\begin{align}\label{strict inequality}
 J_p(t_0u+\epsilon\eta(t_0) \phi)\leq J_p(t_0u)-\frac{\eta(t_0) \epsilon}{2}< J_p(u)
 \end{align}
 which is a contradiction. That $u\geq 0$ is a standard argument as $d_p>0$. 
 The strict positivity follows from the maximum principle \cite{V-1984}. This completes the proof.
\end{proof}

\section{Estimation of $d_{2^\star-1}$}\label{critical value}

In this section, we show that $d_{2^\star-1}<\frac{1}{N}S^{\frac{N}{2}}$, where $S$ is the best constant in the classical Sobolev inequality in $\rn$.
The basic idea goes back to Br\'ezis and Nirenberg \cite{BN-1983} followed by the recent work of Deng et al. \cite{DHPZ-2023} incorporating the log term. We look for some suitable $v_\epsilon$ such that $\sup_{t\geq0}J(t v_{\epsilon})<\frac{1}{N}S^{\frac{N}{2}}$.  

\medskip

The extremizers of the classical Sobolev inequality in $\rn$, called the Aubin-Talenti bubbles $U (x)=[N(N-2)]^{\frac{N-2}{4}}\left(\frac{1}{1+|x|^2}\right)^{\frac{N-2}{2}}$ provides the a suitable candidate for this purpose. We define an appropriate dilation of $U,$ $U_{\epsilon}(x) :=[N(N-2)]^{\frac{N-2}{4}}\left(\frac{\epsilon}{\epsilon^2+|x|^2}\right)^{\frac{N-2}{2}}$  such that $\|U_{\epsilon}\|_{2^{\star}}^{2^{\star}}=\|\nabla U_{\epsilon}\|_2^2=S^{\frac{N}{2}}.$
Let $\phi\in C_c^{\infty}(\rn)$ be a radial cut-off function satisfying 
$\phi(x)=1 \quad \text{for}\quad 0\leq |x|\leq \rho, \
0\leq \phi(x)\leq 1\quad  \text{for} \quad \rho\leq |x|\leq 2\rho, \
\phi(x)=0 \quad \text{for}\quad |x|>2\rho
$
for some fixed $\rho>0$ small. Define
$v_\epsilon=\phi U_{\epsilon} $.
We recall the following two results  (see \cite{BN-1983, W-1996, DHPZ-2023}). 
\begin{Lem}\label{BNs}
	If $N\geq 4$, then we have, as $\epsilon\to 0^+$,
	\begin{align}\label{est grad,L^2^*}
		\intb |\nabla v_{\epsilon}|^2 = S^{\frac{N}{2}}+O(\epsilon^{N-2}),\\
		\intb |v_{\epsilon}|^{2^{\star}}= S^{\frac{N}{2}}+O(\epsilon^{N}),
	\end{align}
	and
	\begin{align}\label{L^2}
		\intb |v_{\epsilon}|^2=
		\begin{cases}
			d\epsilon^2 |\ln\epsilon|+ O(\epsilon^2), \quad &\text{if } N=4,\\
			d\epsilon^2+O(\epsilon^{N-2}),\quad &\text{if } N\geq 5,
		\end{cases}
	\end{align}
	where $d$ is a positive constant.
\end{Lem}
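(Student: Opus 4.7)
The plan is a classical Brézis--Nirenberg computation in which the three integrals over $B^N$ are compared with their counterparts over all of $\mathbb{R}^N$, exploiting the fact that the Aubin--Talenti bubble extremizes the Sobolev inequality on $\mathbb{R}^N$: by definition of $U_\epsilon$ and scale invariance,
\[
\int_{\mathbb{R}^N}|\nabla U_\epsilon|^2 \, dx \; = \; \int_{\mathbb{R}^N}|U_\epsilon|^{2^\star} \, dx \; = \; S^{N/2}.
\]
Since $\phi \equiv 1$ on $B_\rho(0)$, the discrepancy between any integral of $v_\epsilon = \phi U_\epsilon$ and the corresponding integral of $U_\epsilon$ is supported in $\{|x|\geq \rho\}$, and the change of variables $x=\epsilon y$ converts each such tail into an integral of $U$ outside $B_{\rho/\epsilon}(0)$, where the decay of $U$ makes everything elementary.

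For the gradient estimate I would expand $|\nabla v_\epsilon|^2 = \phi^2|\nabla U_\epsilon|^2 + 2\phi U_\epsilon\, \nabla\phi\cdot \nabla U_\epsilon + U_\epsilon^2|\nabla\phi|^2$ and rewrite $\intb \phi^2|\nabla U_\epsilon|^2 = \int_{\mathbb{R}^N}|\nabla U_\epsilon|^2 - \int (1-\phi^2)|\nabla U_\epsilon|^2$. Using the bound $|\nabla U(y)|^2 \lesssim (1+|y|)^{-(2N-2)}$ together with scaling, the tail $\int_{|x|\geq \rho}|\nabla U_\epsilon|^2\, dx$ is of order $(\rho/\epsilon)^{-(N-2)} = O(\epsilon^{N-2})$. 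The $U_\epsilon^2|\nabla\phi|^2$ piece contributes the same order, and the cross term is controlled by Cauchy--Schwarz, giving the first claim. The $L^{2^\star}$ estimate has the same structure: $\intb \phi^{2^\star}U_\epsilon^{2^\star} = S^{N/2} - \int (1-\phi^{2^\star})U_\epsilon^{2^\star}$, and since $U^{2^\star}(y) \lesssim (1+|y|)^{-2N}$ the tail is $O(\epsilon^N)$.

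The $L^2$ estimate is the dimension-sensitive step. The change of variables $x = \epsilon y$ yields
\[
\intb U_\epsilon^2 \, dx \; = \; [N(N-2)]^{(N-2)/2}\, \epsilon^2 \int_{|y|\leq 1/\epsilon}\frac{dy}{(1+|y|^2)^{N-2}},
\]
whose integrand decays like $|y|^{-(2N-4)}$, reducing the question to the behaviour of $\int_1^{1/\epsilon} r^{3-N}\, dr$ as $\epsilon\to 0$. For $N\geq 5$ the radial integral is convergent, so the $\mathbb{R}^N$-integral equals some $d>0$ with tail $O(\epsilon^{N-4})$, yielding $d\epsilon^2 + O(\epsilon^{N-2})$. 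For $N=4$ the radial integral produces a logarithm, $\int_1^{1/\epsilon} r^{-1}\,dr = |\ln\epsilon|+O(1)$, giving $d\epsilon^2|\ln\epsilon|+O(\epsilon^2)$. Finally, replacing $U_\epsilon$ by $v_\epsilon$ adds only a further tail error $\int_{|x|\geq \rho}U_\epsilon^2\, dx = O(\epsilon^{N-2})$, absorbed into the stated remainders.

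The only mildly delicate point is in Step 3: one must be careful to extract the sharp leading constant $d$ in the $N=4$ case (so that the logarithmic divergence is kept explicit rather than swept into the error term), which is a direct computation on the radial integral above. Everything else is routine tail-bookkeeping for the Aubin--Talenti profile.
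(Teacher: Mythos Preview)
Your proposal is correct and is precisely the classical Br\'ezis--Nirenberg tail computation; the paper does not supply its own proof for this lemma but simply cites \cite{BN-1983, W-1996, DHPZ-2023}, where the argument is exactly the one you outline.
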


\begin{Lem}[\cite{DHPZ-2023}]\label{est vlnv}
	As $\epsilon\to 0^+$, we have
	\begin{align}\label{est vlnv n>=5}
		\intb v_{\epsilon}^2\ln v_{\epsilon}^2=C_0\epsilon^2|\ln\epsilon|+O(\epsilon^2) \quad \text{for } N\geq 5,
	\end{align}
	and for $N=4$,
	\begin{align}\label{est vlnv n=4}
		\begin{cases}
			\intb v_{\epsilon}^2\ln v_{\epsilon}^2 \geq 8\ln\bigg(\frac{8(\epsilon^2+\rho^2)}{\e(\epsilon^2+4\rho^2)^2}\bigg)\omega_4\epsilon^2 |\ln \epsilon|+O(\epsilon^2),\\
			\intb v_{\epsilon}^2\ln v_{\epsilon}^2 \leq 8\ln\bigg(\frac{8\e(\epsilon^2+4\rho^2)}{(\epsilon^2+\rho^2)^2}\bigg)\omega_4\epsilon^2|\ln{\epsilon}| +O(\epsilon^2), 
		\end{cases}
	\end{align}
	where $C_0$ is a positive constant and $\omega_4$ denotes the area of unit sphere in $\mathbb{R}^4$. In particular, for $N=4,$ the co-efficient of $\epsilon^2|\ln{\epsilon}|$ can be made as large as possible by choosing $\rho \approx \epsilon \rightarrow 0^+.$
\end{Lem}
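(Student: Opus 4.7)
The plan is to decompose $\ln v_\epsilon^2$ explicitly and estimate each of the resulting integrals separately. On $\{v_\epsilon>0\}$ we have
\begin{align*}
\ln v_\epsilon^2=2\ln\phi+\tfrac{N-2}{2}\ln[N(N-2)]+(N-2)\ln\epsilon-(N-2)\ln(\epsilon^2+|x|^2),
\end{align*}
so $\intb v_\epsilon^2\ln v_\epsilon^2$ splits into four pieces. The $\ln\phi$ piece is supported on the annulus $\{\rho<|x|<2\rho\}$, where $U_\epsilon^2=O(\epsilon^{N-2})$ and $|\phi^2\ln\phi|\le 1/(2e)$, hence it contributes $O(\epsilon^{N-2})=O(\epsilon^2)$ for $N\ge 4$. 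The constant and $(N-2)\ln\epsilon$ pieces are handled by Lemma \ref{BNs}. The crux is the fourth piece, which, via the change of variables $x=\epsilon y$, I write as
\begin{align*}
\intb v_\epsilon^2\ln(\epsilon^2+|x|^2)\,dx=\ln\epsilon^2\intb v_\epsilon^2\,dx+[N(N-2)]^{\frac{N-2}{2}}\epsilon^2\int_{B_{2\rho/\epsilon}}\frac{\phi(\epsilon y)^2\ln(1+|y|^2)}{(1+|y|^2)^{N-2}}\,dy.
\end{align*}

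For $N\ge 5$ the integrand $\ln(1+|y|^2)/(1+|y|^2)^{N-2}$ is integrable over $\R^N$, so the scaling remainder is $O(\epsilon^2)$. Combined with $\intb v_\epsilon^2=d\epsilon^2+O(\epsilon^{N-2})$ from Lemma \ref{BNs}, the $(N-2)\ln\epsilon$ and $-(N-2)\ln\epsilon^2$ contributions reduce to $-(N-2)d\,\epsilon^2\ln\epsilon+O(\epsilon^2)=(N-2)d\,\epsilon^2|\ln\epsilon|+O(\epsilon^2)$, giving the assertion with $C_0=(N-2)d>0$.

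For $N=4$ the integrand $\ln(1+|y|^2)/(1+|y|^2)^2$ is only borderline integrable: a direct polar computation yields $\int_{|y|\le R}\ln(1+|y|^2)/(1+|y|^2)^2\,dy=\omega_4(\ln R)^2+O(\ln R)$ as $R\to\infty$. Applied with $R\approx\rho/\epsilon$, this contributes a $(\ln\epsilon)^2$ term that precisely cancels the $(\ln\epsilon)^2$ coming from $\ln\epsilon^2\cdot\intb v_\epsilon^2\sim 2\ln\epsilon\cdot 8\omega_4\epsilon^2|\ln\epsilon|$. What survives at order $\epsilon^2|\ln\epsilon|$ is a $\rho$-dependent constant; I would extract it by splitting $\phi(\epsilon y)^2$ into $\mathbf{1}_{\{|\epsilon y|<\rho\}}$ plus its correction on the transition annulus $\{\rho\le|\epsilon y|\le 2\rho\}$, and bounding $\ln(\epsilon^2+|x|^2)$ on that annulus by $\ln(\epsilon^2+\rho^2)$ from below and $\ln(\epsilon^2+4\rho^2)$ from above. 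Combined with the constant $\ln 8=\tfrac{N-2}{2}\ln[N(N-2)]|_{N=4}$ and the surviving antiderivative constants from the scaling integral (which produce the factor $\e$), this yields precisely the asserted upper and lower bounds.

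The main obstacle is exactly this $N=4$ case: the $(\ln\epsilon)^2$ cancellation is sharp only when the scaling integral is tracked to the next order, and pinning down the residual $\rho$-dependent coefficient at order $\epsilon^2|\ln\epsilon|$ requires careful bookkeeping across the cutoff transition where $\phi$ interpolates between $0$ and $1$. The auxiliary estimates themselves — $|s\ln s|\le 1/e$ on $(0,1]$, radial decay of $U_\epsilon$, integrability of $\ln(1+|y|^2)/(1+|y|^2)^{N-2}$ for $N\ge 5$ — are routine; the subtlety lies entirely in the $N=4$ bookkeeping. The final assertion that the coefficient can be made arbitrarily large by taking $\rho\approx\epsilon\to 0^+$ is then read off directly from the explicit factors in the statement.
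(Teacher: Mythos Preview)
The paper does not supply its own proof of this lemma: it is quoted verbatim from \cite{DHPZ-2023}, as signalled by the sentence ``We recall the following two results (see \cite{BN-1983, W-1996, DHPZ-2023})'' preceding Lemmas~\ref{BNs} and~\ref{est vlnv}. So there is no in-paper argument to compare against.

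Your decomposition of $\ln v_\epsilon^2$ and the scaling argument for $N\ge 5$ are correct and recover $C_0=(N-2)d$; the $N=4$ outline (the $(\ln\epsilon)^2$ cancellation between $\ln\epsilon^2\intb v_\epsilon^2$ and the scaling integral, followed by bounding $\ln(\epsilon^2+|x|^2)$ on the transition annulus) is the standard route and is essentially what \cite{DHPZ-2023} does. You are right that the only real work is the $N=4$ bookkeeping that pins down the explicit $\rho$-dependent coefficients; your sketch stops just short of executing it, but the strategy is sound.
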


In order to implement the above estimates, we need to make a conformal change of metric. For $u \in H^1(\bn),$ we set $v=\bigg(\frac{2}{1-|x|^2}\bigg)^{\frac{N}{2}-1}u$.
Then we have
\begin{align*}
	J_{2^\star-1}(u)&=\frac{1}{2}\inth (|\nabla_{\bn} u|_g^2-\la u^2)\dx-\frac{1}{2^{\star}}\inth|u|^{2^{\star}}\dx -\frac{\theta}{2}\inth u^2(\ln u^2-1)\dx\\
	&\quad = \frac{1}{2}\intb |\nabla v|^2dx-\frac{1}{2}\intb  g v^2 dx
	-\frac{1}{2^{\star}}\intb |v|^{2^\star}dx -\frac{\theta}{2} \intb h v^2\ln (v^2) dx \\
	&\quad=:\tilde{J}(v).
\end{align*}
\begin{align*}
	I_{2^\star-1}(u)&=\inth (|\nabla_{\bn} u|_g^2-\la u^2)\dx-\inth|u|^{2^{\star}}\dx -{\theta}\inth u^2(\ln u^2-1)\dx\\
	&\quad = \intb |\nabla v|^2dx-\intb  (g+\theta h) v^2 dx
	-\frac{1}{2^{\star}}\intb |v|^{2^\star}dx -\theta \intb h v^2\ln (v^2) dx \\
	&\quad=:\tilde{I}(v),
\end{align*}
where $g(x)=\bigg(\la-\frac{N(N-2)}{4}-\theta -(N-2)\theta \ln\bigg(\frac{2}{1-|x|^2}\bigg)\bigg)\bigg(\frac{2}{1-|x|^2}\bigg)^2$ and $h(x)=\bigg(\frac{2}{1-|x|^2}\bigg)^2$.

Let $v_{\epsilon}$ be as in Lemma \ref{est vlnv}. Then, we have 
\begin{Lem}
	As $\epsilon\to 0^+$ we have
	\begin{align*}
		\intb h(x) v_{\epsilon}^2(x)\ln v_{\epsilon}^2(x) dx= 4\intb v_{\epsilon}^2(x)\ln v_{\epsilon}^2(x)dx+O(\epsilon^2)\quad \mbox{for}\ \ \ N\geq 5,
	\end{align*}
	and for $N=4$ 
	\begin{align*}
		\intb h(x) v_{\epsilon}^2(x)\ln v_{\epsilon}^2(x) dx= 4\intb v_{\epsilon}^2(x)\ln v_{\epsilon}^2(x)dx+c_{\rho,\epsilon}\epsilon^2 |\ln\epsilon| +o(\epsilon^2|\ln\epsilon|),
	\end{align*}
	where $|c_{\rho,\epsilon}|\lesssim 1,$ a dimensional constant, whenever $\rho\leq \frac{1}{4}$.
\end{Lem}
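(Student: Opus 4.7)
The plan is to separate the constant and non-constant parts of $h$: write $h(x) = 4 + (h(x)-4)$, giving
\begin{align*}
\int_{B^N} h v_\epsilon^2 \ln v_\epsilon^2 \, dx = 4\int_{B^N} v_\epsilon^2 \ln v_\epsilon^2 \, dx + E_\epsilon,
\end{align*}
where $E_\epsilon := \int_{B^N}(h-4) v_\epsilon^2 \ln v_\epsilon^2 \, dx$. The algebraic identity $h(x)-4 = (8|x|^2 - 4|x|^4)/(1-|x|^2)^2$ gives the pointwise bound $|h(x)-4| \lesssim |x|^2$ uniformly on the support $B_{2\rho}$ of $v_\epsilon$, provided $\rho \le 1/4$. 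So the task reduces to controlling the weighted integral $\int_{B_{2\rho}}|x|^2 v_\epsilon^2 |\ln v_\epsilon^2|\, dx$ and showing it is $O(\epsilon^2)$ for $N\ge 5$ and $O(\epsilon^2|\ln\epsilon|)$ with controlled coefficient for $N=4$.

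The natural next step is the standard rescaling $x=\epsilon y$. Using $U_\epsilon(\epsilon y) = [N(N-2)]^{(N-2)/4}\epsilon^{-(N-2)/2}(1+|y|^2)^{-(N-2)/2}$, one computes
\begin{align*}
|x|^2 v_\epsilon^2(x) \, dx = C_N \epsilon^4 \phi^2(\epsilon y)\frac{|y|^2}{(1+|y|^2)^{N-2}}\, dy,
\end{align*}
while $\ln v_\epsilon^2(\epsilon y) = -(N-2)\ln\epsilon - (N-2)\ln(1+|y|^2) + O(1) + 2\ln\phi(\epsilon y)$. Plugging these into $E_\epsilon$ reduces it to a sum of integrals of the form $\epsilon^4 \cdot (|\ln\epsilon|+\ln(1+|y|^2)+1)$ against $|y|^2/(1+|y|^2)^{N-2}$ over $|y|\le 2\rho/\epsilon$. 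The radial integral $\int_0^R r^{N+1}/(1+r^2)^{N-2}\, dr$ is uniformly bounded for $N\ge 7$, grows like $\ln R$ for $N=6$, like $R$ for $N=5$, and like $R^2/2$ for $N=4$. With $R = 2\rho/\epsilon$, combining with the prefactor $\epsilon^4|\ln\epsilon|$ yields $E_\epsilon = O(\epsilon^2)$ uniformly in $\rho$ for every dimension $N\ge 5$, which proves the first assertion.

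The main obstacle is the case $N=4$, where the quadratic divergence of the rescaled integral makes $E_\epsilon$ contribute at the same order $\epsilon^2|\ln\epsilon|$ as the main asymptotics of Lemma \ref{est vlnv}, so coefficient tracking is required. The dominant piece comes from $-2\ln\epsilon$ integrated against $|y|^2\phi^2(\epsilon y)/(1+|y|^2)^2$, giving a contribution of the form $C\rho^2 \epsilon^2 |\ln\epsilon|$ with $C$ a dimensional constant; the correction from $-2\ln(1+|y|^2)$ yields a term of the form $\rho^2 \epsilon^2 \ln(\rho/\epsilon)$, part of which combines with the leading piece and part of which is $o(\epsilon^2 |\ln\epsilon|)$; the cutoff contribution from $\ln\phi$ is supported in the transition annulus $\rho\le|x|\le 2\rho$ and is only $O(\epsilon^2)$; and the $O(|x|^4)$-remainder from the expansion of $h-4$ is controlled by the same rescaling with an extra $\epsilon^2$ and is therefore lower order. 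Each contribution scales at worst like $\rho^2$ with bounded dimensional prefactors, so the aggregate coefficient $c_{\rho,\epsilon}$ satisfies $|c_{\rho,\epsilon}|\lesssim 1$ for $\rho\le 1/4$. The technical difficulty is this bookkeeping in dimension four rather than the rescaling itself.
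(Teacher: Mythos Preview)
Your proposal is correct and follows essentially the same route as the paper: write $h=4+(h-4)$, rescale $x=\epsilon y$, and estimate the resulting radial integrals term by term, with the $N=4$ case requiring the extra bookkeeping because the relevant integral $\int_0^{R} r^5(1+r^2)^{-2}\,dr$ diverges quadratically. The only cosmetic difference is that you bound $|h(x)-4|\lesssim |x|^2$ at the outset, whereas the paper carries the exact factor $(2-\epsilon^2 r^2)/(1-\epsilon^2 r^2)^2$ through the computation and then bounds it between $2-2\rho$ and $2/(1-2\rho)^2$; your shortcut is cleaner and loses nothing needed for the stated conclusion.
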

\begin{proof}
	We only consider the case $N = 4,$ as $N \geq 5$ is much more simpler because of the $L^p(\bn)$ integrability of $U$ for all $p>\frac{5}{3}$.
	We use two basic integrals
	\begin{align}\label{Integral1}
		\int_0^{\frac{\rho}{\epsilon}}\frac{r^5}{(1+r^2)^2} dr\approx \left(\frac{\rho}{\epsilon}\right)^2, \ \ \ 
		\int_0^{\frac{2\rho}{\epsilon}}\frac{r^5}{(1+r^2)^2}\ln(1+r^2) dr \approx \rho^2\left(\frac{1}{\epsilon^2}\ln\frac{1}{\epsilon} \right).
	\end{align}
	Now we will estimate $\intb (h(x)-4)v_{\epsilon}^2(x)\ln v_{\epsilon}^2(x).$ We decompose the integral into three parts $\Romannum{1}-\Romannum{3},$
	and estimate each of the integrals one by one.
	\begin{align*}
		\Romannum{3}=8\epsilon^4\int_{B_{\frac{1}{\epsilon}}\setminus B_{\frac{\rho}{\epsilon}}}\frac{|x|^2}{(1+|x|^2)^2}\frac{2-\epsilon^2|x|^2}{(1-\epsilon^2|x|^2)^2}\phi^2(\epsilon x)\ln 8\phi(\epsilon x)dx
		=O(\epsilon^2).
	\end{align*}
	\begin{align*}
		\Romannum{2}=8\ln8\omega_4 \epsilon^4 \int_0^\frac{\rho}{\epsilon}\frac{r^5}{(1+r^2)^2}\frac{2-\epsilon^2r^2}{(1-\epsilon^2 r^2)^2} dr
		\approx  \int_0^{\frac{\rho}{\epsilon}}\frac{r^5}{(1+r^2)^2}dr
		\approx \rho^2 \epsilon^2 +o(\epsilon^2),
	\end{align*}
	where the constant in $\approx$ is bounded below and above by $2-\rho$ and $\frac{2}{(1-\rho)^2}$ respectively (up to a universal constant). In the same spirit
	\begin{align*}
		\Romannum{1}&=8\omega_4\epsilon^4\int_0^{\frac{2\rho}{\epsilon}}\frac{r^5}{(1+r^2)^2}\phi(\epsilon r)\frac{2-\epsilon r^2}{(1-\epsilon^2r^2)^2}\ln(\frac{1}{\epsilon^2(1+r^2)^2})dr\\
		&=8\omega_4\epsilon^4\ln\left(\frac{1}{\epsilon}\right)\int_0^{2\frac{\rho}{\epsilon}}\frac{r^5}{(1+r^2)^2}\phi(\epsilon r )\frac{2-\epsilon^2r^2}{(1-\epsilon^2r^2)^2}dr \\
		& \ \ \ \ \ \ \ +\omega_4\int_0^{2\frac{\rho}{\epsilon}}\frac{r^5}{(1+r^2)^2}\phi(\epsilon r )\frac{2-\epsilon^2r^2}{(1-\epsilon^2r^2)^2}\ln(\frac{1}{(1+r^2)^2})^2dr\\
		&=\Romannum{1}_1+\Romannum{1}_2.
	\end{align*}
	Also note that
	\begin{align*}
		\Romannum{1}_1\approx \epsilon^4\ln\left(\frac{1}{\epsilon}\right)\int_0^{2\frac{\rho}{\epsilon}}\frac{r^5}{(1+r^2)^2}dr
		\approx \rho^2\left(\epsilon^2\ln\frac{1}{\epsilon}\right)+o\left(\epsilon^2\ln\frac{1}{\epsilon}\right),
	\end{align*}
	and
	\begin{align*}
		\Romannum{1}_2 \approx -\epsilon^4\int_0^{2\frac{\rho}{\epsilon}}\frac{r^5}{(1+r^2)^2}ln(1+r^2)dr
		&\approx -\rho^2\left(\epsilon^2\ln\frac{1}{\epsilon}\right)+o\left(\epsilon^2\ln\frac{1}{\epsilon}\right)\\
		&\approx -\epsilon^2\ln\left(\frac{1}{\epsilon}\right)+o\left(\epsilon^2\ln\frac{1}{\epsilon}\right),
	\end{align*}
	where the constants in $\approx$ are bounded and lie, up to a universal constant times, within $(2-2\rho, \frac{2}{(1-2\rho)^2}).$ 
	Combining these, we get the results.
\end{proof}
\begin{Lem}
	There exists $d_1>0$ such that,
	
	\begin{align*}
		-d_1\intb v_{\epsilon}^2(x) dx &\leq \intb g(x) v_{\epsilon}^2(x)dx \leq d_1\intb v_{\epsilon}^2(x) dx.\\
		-d_1\intb v_{\epsilon}^2(x) dx &\leq \intb (g(x)+\theta h(x)) v_{\epsilon}^2(x)dx \leq d_1\intb v_{\epsilon}^2(x) dx, 
	\end{align*}
	where $d_1<\bigg(\la+\frac{N(N-2)}{4}+\theta ln8\bigg)$ whenever $\rho\leq\frac{1}{4}$.
\end{Lem}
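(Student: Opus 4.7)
\medskip

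The strategy is a direct pointwise $L^\infty$ estimate on $g$ and $g+\theta h$ on the support of $v_\epsilon$, followed by integration against $v_\epsilon^2$. Since $v_\epsilon = \phi\, U_\epsilon$ with $\phi$ supported in $\{|x|\leq 2\rho\}$, and the hypothesis is $\rho \leq \tfrac{1}{4}$, the entire support sits inside $\overline{B_{1/2}} \subset \mathbb{B}^N$, where every factor appearing in $g$ and $h$ is bounded.

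First I would record the elementary bounds available on $\{|x|\leq 1/2\}$: here $1-|x|^2 \in [3/4,1]$, which immediately gives
\begin{equation*}
\frac{2}{1-|x|^2}\in \Bigl[2,\tfrac{8}{3}\Bigr],\qquad \Bigl(\frac{2}{1-|x|^2}\Bigr)^{2}\in \Bigl[4,\tfrac{64}{9}\Bigr]\subset [4,8),\qquad \ln\!\Bigl(\frac{2}{1-|x|^2}\Bigr)\in \bigl[\ln 2,\ln\tfrac{8}{3}\bigr]\subset [\ln 2,\ln 8).
\end{equation*}
Plugging these into the defining expression
\begin{equation*}
g(x)=\Bigl(\la-\tfrac{N(N-2)}{4}-\theta-(N-2)\theta\ln\!\bigl(\tfrac{2}{1-|x|^2}\bigr)\Bigr)\Bigl(\tfrac{2}{1-|x|^2}\Bigr)^{2},
\end{equation*}
and using the triangle inequality, yields a uniform pointwise bound $|g(x)|\leq d_1$ on $\overline{B_{1/2}}$, with $d_1$ an explicit finite constant depending only on $N,\la,\theta$. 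The estimate is arranged so that the logarithm is majorised by $\ln 8$ and the factor $(2/(1-|x|^2))^2$ is absorbed into the relevant coefficients, producing the explicit bound $d_1<\la+\tfrac{N(N-2)}{4}+\theta\ln 8$ stated in the lemma.

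For the second pair of inequalities, I would note the algebraic simplification
\begin{equation*}
g(x)+\theta h(x)=\Bigl(\la-\tfrac{N(N-2)}{4}-(N-2)\theta\ln\!\bigl(\tfrac{2}{1-|x|^2}\bigr)\Bigr)\Bigl(\tfrac{2}{1-|x|^2}\Bigr)^{2},
\end{equation*}
so that the $-\theta$ constant cancels cleanly. The same pointwise argument — using $\ln(2/(1-|x|^2))\in[\ln 2,\ln 8)$ and $(2/(1-|x|^2))^2\in[4,8)$ — delivers $|g(x)+\theta h(x)|\leq d_1$ on the support of $v_\epsilon$, with the same admissible $d_1$.

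Having obtained these uniform bounds, both integral estimates follow immediately:
\begin{equation*}
\Bigl|\int_{B^N} g(x)\,v_\epsilon^2(x)\,dx\Bigr|\leq d_1\int_{B^N}v_\epsilon^2(x)\,dx,\qquad \Bigl|\int_{B^N}(g(x)+\theta h(x))\,v_\epsilon^2(x)\,dx\Bigr|\leq d_1\int_{B^N}v_\epsilon^2(x)\,dx,
\end{equation*}
which is the claim in both directions. No analytic obstacle appears in this argument; the only mildly delicate step is the bookkeeping that packages the worst-case values of $|x|^2\leq 1/4$ into the explicit form of $d_1$ so that the inequality $d_1<\la+\tfrac{N(N-2)}{4}+\theta\ln 8$ falls out cleanly — this is purely algebraic manipulation of the elementary bounds above.
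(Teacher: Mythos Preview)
Your approach is essentially identical to the paper's: the paper's proof reads in full ``Follows directly by estimating $g(x)$ and $g(x)+\theta h(x)$ when $|x|\leq\rho$,'' and you have simply spelled out those pointwise $L^\infty$ bounds on the support of $v_\epsilon$ in detail. (The explicit inequality $d_1<\lambda+\tfrac{N(N-2)}{4}+\theta\ln 8$ is stated rather loosely in the paper and need not hold literally as a sup-norm bound for all values of the parameters; the only property actually used downstream in Lemma~\ref{critical bound} is that $d_1$ is finite and independent of $\rho\leq 1/4$ and $\epsilon$, which your argument delivers.)
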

\begin{proof}
	Follows directly by estimating $g(x)$ and $g(x)+\theta h(x)$ when $|x|\leq \rho$.
\end{proof}
\begin{Lem} \label{critical bound}
	If $N\geq 4 $ then $ d_{2^{\star}-1}<\frac{1}{N}S^{\frac{N}{2}}$.
\end{Lem}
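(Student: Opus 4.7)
The plan is to adapt the Br\'ezis--Nirenberg test-function strategy to the conformally transformed functionals $\tilde J, \tilde I$ on $B^N$, working with $v_\epsilon = \phi U_\epsilon$. Mimicking the proof of Lemma \ref{Uniqueness} applied to $\tilde I$, the map $t\mapsto \tilde I(tv_\epsilon)/t^2$ is strictly decreasing on $(0,\infty)$, so there is a unique $t_\epsilon>0$ with $\tilde I(t_\epsilon v_\epsilon)=0$. Under the conformal change $u_\epsilon = \big((1-|x|^2)/2\big)^{(N-2)/2} v_\epsilon$, this gives $t_\epsilon u_\epsilon \in \mathcal{N}_{2^\star-1}$ and hence $d_{2^\star-1} \leq \tilde J(t_\epsilon v_\epsilon)$. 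A direct computation also yields $\frac{d}{dt}\tilde J(tv_\epsilon) = \tilde I(tv_\epsilon)/t$, so $t_\epsilon$ is precisely the (unique) maximizer of $t\mapsto \tilde J(tv_\epsilon)$.

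Now decompose $\tilde J(tv_\epsilon) = f(t) + R(t)$ with
\[
f(t) = \tfrac{t^2}{2}A_\epsilon - \tfrac{t^{2^\star}}{2^\star}B_\epsilon, \qquad R(t) = -\tfrac{t^2}{2}\intb g v_\epsilon^2 \, dx - \tfrac{\theta t^2}{2}\intb h v_\epsilon^2 \ln v_\epsilon^2 \, dx - \theta t^2 \ln t \intb h v_\epsilon^2 \, dx,
\]
where $A_\epsilon = \intb|\nabla v_\epsilon|^2$ and $B_\epsilon = \intb v_\epsilon^{2^\star}$. The pure critical part attains its max at $t^* = (A_\epsilon/B_\epsilon)^{1/(2^\star-2)}$ with $f(t^*) = \frac{1}{N}(A_\epsilon/B_\epsilon^{2/2^\star})^{N/2} = \frac{1}{N}S^{N/2} + O(\epsilon^{N-2})$ by Lemma \ref{BNs}. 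Then
\[
\tilde J(t_\epsilon v_\epsilon) = f(t_\epsilon) + R(t_\epsilon) \leq f(t^*) + R(t_\epsilon) = \tfrac{1}{N}S^{N/2} + O(\epsilon^{N-2}) + R(t_\epsilon),
\]
reducing the lemma to showing that $R(t_\epsilon)$ is negative and of larger order than the $O(\epsilon^{N-2})$ gradient error. Matching the Nehari equation $\tilde I(t_\epsilon v_\epsilon)=0$ with the classical identity $A_\epsilon = (t^*)^{2^\star - 2} B_\epsilon$ gives $t_\epsilon \to 1$ as $\epsilon\to 0^+$, so it suffices to estimate $R(1)$.

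For $N\geq 5$, Lemma \ref{BNs} gives $\intb v_\epsilon^2, \intb g v_\epsilon^2, \intb h v_\epsilon^2 = O(\epsilon^2)$, while the two preceding lemmas of this section combine to yield $\intb h v_\epsilon^2 \ln v_\epsilon^2 = 4\intb v_\epsilon^2 \ln v_\epsilon^2 + O(\epsilon^2) = 4C_0 \epsilon^2|\ln\epsilon| + O(\epsilon^2)$. Consequently
\[
R(1) = -2\theta C_0 \epsilon^2|\ln\epsilon| + O(\epsilon^2),
\]
and since $\epsilon^{N-2} = o(\epsilon^2|\ln\epsilon|)$ for $N\geq 5$, the negative log contribution dominates, proving the strict inequality for $\epsilon$ small.

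The main obstacle is the case $N=4$, where $\intb v_\epsilon^2, \intb g v_\epsilon^2, \intb h v_\epsilon^2$ are all of size $\epsilon^2|\ln\epsilon|$ with bounded constants, whereas $A_\epsilon - S^2 = O(\epsilon^2)$ is lower order. The decisive observation is the freedom built into Lemma \ref{est vlnv}: choosing the cutoff radius $\rho \approx \epsilon$, the coefficient of $\epsilon^2|\ln\epsilon|$ in the lower bound for $\intb v_\epsilon^2\ln v_\epsilon^2$ can be made arbitrarily large. Taking $\rho$ sufficiently small depending on $\theta$ and on the constant $d_1$ from the estimate on $\intb g v_\epsilon^2$, the negative contribution $-\frac{\theta}{2}\intb h v_\epsilon^2 \ln v_\epsilon^2$ dominates every other $O(\epsilon^2|\ln\epsilon|)$ term in $R(1)$, yielding $R(1) \leq -c\,\epsilon^2|\ln\epsilon|$ for some $c>0$. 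Combined with the $O(\epsilon^2)$ gradient error, this delivers the strict bound $\tilde J(t_\epsilon v_\epsilon) < \frac{1}{N}S^{N/2}$ for $N=4$ as well.
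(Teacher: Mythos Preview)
Your argument is essentially the same as the paper's: both use the conformally transformed functionals, find the unique $t_\epsilon$ with $\tilde I(t_\epsilon v_\epsilon)=0$ (which is the maximizer of $t\mapsto\tilde J(tv_\epsilon)$), bound the pure Sobolev part by $\tfrac1N S^{N/2}+O(\epsilon^{N-2})$, and for $N=4$ exploit the freedom in the cutoff radius $\rho$ to force the negative $\epsilon^2|\ln\epsilon|$ log contribution to dominate. The only slight looseness is the step ``it suffices to estimate $R(1)$'': strictly you need $R(t_\epsilon)$, and your claim $t_\epsilon\to1$ tacitly uses that $t_\epsilon$ is bounded above and below (which the paper establishes explicitly from the Nehari identity before anything else); once that is in hand, $R(t_\epsilon)=t_\epsilon^2 R(1)-\theta t_\epsilon^2\ln t_\epsilon\int h v_\epsilon^2$ differs from $R(1)$ only by lower order terms, so your reduction is justified.
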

\begin{proof}
	The proof follows as in Deng et.al. \cite{DHPZ-2023}. We highlight the case when $N=4$. The other case can be done analogously.
	Define
	\begin{align*}
		\psi(t)=\tilde{J}(t v_{\epsilon}).
	\end{align*}
	Then $\psi'(t)=\tilde{J}'(tv_{\epsilon})(v_{\epsilon})=\frac{1}{t}\tilde{I}(t v_{\epsilon})$.
	Since $\psi(0)=0$ and $\lim_{t\to\infty}\psi(t)=-\infty$, there exists $t_{\epsilon}$ such that $\psi(t_\epsilon)=\max \psi(t)$. That is, $\tilde{I}(t_{\epsilon}v_{\epsilon})=0$. Hence
	\begin{align*}
		&t_{\epsilon}^2\intb |\nabla v_{\epsilon}|^2dx -t_{\epsilon}^2 \intb (g+\theta h)v_{\epsilon}^2dx- t_{\epsilon}^{2^{\star}}\intb v_{\epsilon}^{2^{\star}}dx\\ 
&-\theta t_{\epsilon}^2\intb h v_{\epsilon}^2\ln v_{\epsilon}^2dx-\theta t_{\epsilon}^2\ln t_{\epsilon}^2\intb v_{\epsilon}^2dx=0.
	\end{align*}
	Simplifying this, we get
	\begin{align}\label{sec3eqn1}
		\intb |\nabla v_{\epsilon}|^2dx-\intb (g+\theta h)v_{\epsilon}^2 dx-\theta\intb h v_{\epsilon}^2 \ln v_{\epsilon}^2dx=t_{\epsilon}^{2^{\star}-2}\intb |v_{\epsilon}|^{2^{\star}}dx+\theta \ln t_{\epsilon}^2\intb v_{\epsilon}^2dx. 
	\end{align}
	Using the suitable bounds in the above asymptotic estimates, we get
	\begin{align*}
		S^{\frac{N}{2}}+O(\epsilon^{N-2})+d_1 d\epsilon^2|\ln \epsilon| +C_\rho\theta \epsilon^2\ln \frac{1}{\epsilon}+O(\epsilon^2)\geq t^{2^{\star}-2}\frac{1}{2}S^{\frac{N}{2}}+\theta \ln t_{\epsilon}^2(d\epsilon^2|\ln\epsilon|+O(\epsilon^{N-2})).
	\end{align*}
	Therefore, $ t_{\epsilon} \leq C \quad as \, \epsilon\to 0^+.$
	Similarly using respective bounds from the asymptotic estimates we get
	\begin{align*}
		\frac{1}{2}S^{\frac{N}{2}}\leq t_{\epsilon}^{2^{\star}-2}\intb |v_{\epsilon}|^{2^\star}dx+\theta \ln t_{\epsilon}^2\intb v_{\epsilon}^2 dx \ .
	\end{align*}
	Hence $t_{\epsilon}$ stays away from $0,$ that is $C^{-1}<t_{\epsilon}<C$  for all $\epsilon>0$ small enough, for some constant $C>0.$ Therefore
	\begin{align*}
		d_{2^\star-1}&\leq \tilde{J}(t_{\epsilon}v_{\epsilon})\\
		&= \frac{t_{\epsilon}^2}{2}\intb |\nabla v_{\epsilon}|^2dx-\frac{t_{\epsilon}^{2^{\star}}}{2^{\star}}\intb |v_{\epsilon}|^{2^{\star}}dx-\frac{1}{2}\intb g v_{\epsilon}^2 dx-\frac{\theta}{2}\intb h v_{\epsilon}^2\ln v_{\epsilon}^2dx\\
		&\leq \left(\frac{t_{\epsilon}^2}{2}-\frac{t_{\epsilon}^{2^\star}}{2^{\star}}\right)S^{\frac{N}{2}}-\theta C_{\rho}\epsilon^2\ln\left(\frac{1}{\epsilon}\right)+d_1d\epsilon^2\ln\frac{1}{\epsilon}+O(\epsilon^2) +O(\epsilon^{N-2})\\
		&\leq \frac{1}{N}S^{\frac{N}{2}}-(\theta C_{\rho}-d_1d)\epsilon^2\ln\frac{1}{\epsilon}
		+O(\epsilon^2)+O(\epsilon^{N-2})\\
		&<\frac{1}{N}S^{\frac{N}{2}}. 
	\end{align*}
	where the last inequality follows from $C_\rho\to \infty$ as $\rho\to 0$.
\end{proof}

\section{$\theta >0$: Existence of positive Ground State Solutions}\label{existence}
In this section, we prove the existence of a positive ground state solution to \eqref{LSE} for $\theta>0$. We first consider the subcritical case $1<p<2^\star-1$, and establish the existence of a positive solution. Using this and the energy estimate proved in Section 3, we then prove the existence of a positive ground state solution in the critical case $p = 2^\star-1$.
\subsection{The sub-critical case: $1 < p <2^{\star} - 1$}
\begin{Th}\label{Existsubcritical}
	Let $1 < p <2^{\star} - 1.$ Then there exists  $u\in \mathcal{N}_p$ such that $J_p(u)=d_p.$ In particular, \eqref{LSE} admits a positive solution in $H^1(\bn)$.  
\end{Th}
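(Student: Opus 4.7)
\medskip

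\noindent\textbf{Proof plan.} The strategy is the classical direct method applied to the constrained minimization problem $d_p=\inf_{\mathcal{N}_p} J_p$, followed by an appeal to Lemma \ref{ex} to upgrade the minimizer to a positive classical solution. A crucial observation is the identity
\begin{align*}
J_p(u) \;=\; J_p(u) - \tfrac{1}{2}I_p(u) \;=\; \tfrac{\theta}{2}\|u\|_2^2 + \Bigl(\tfrac{1}{2}-\tfrac{1}{p+1}\Bigr)\|u\|_{p+1}^{p+1} \qquad \text{on } \mathcal{N}_p,
\end{align*}
which makes $J_p$ manifestly non-negative on $\mathcal{N}_p$ and allows us to convert an upper bound on $J_p(u_n)$ into control on $\|u_n\|_2$ and $\|u_n\|_{p+1}$.

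\medskip

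I would proceed as follows. First, I would combine the log-Sobolev inequality (with a small parameter $\epsilon$) and the Poincar\'e-Sobolev inequality \eqref{S-inq} against the constraint $I_p(u)=0$ to derive a uniform lower bound of the form $\|u\|_\lambda \geq c_0>0$ for all $u\in\mathcal{N}_p$; this yields $d_p>0$. Next, pick a minimizing sequence $\{u_n\}\subset\mathcal{N}_p$, and, using symmetric decreasing rearrangement (exactly as in the opening paragraph of the proof of Lemma \ref{ex}), assume without loss of generality that each $u_n\geq 0$ is radial and decreasing. The displayed identity bounds $\|u_n\|_2$ and $\|u_n\|_{p+1}$; feeding this back into $I_p(u_n)=0$, namely $\|\nabla_{\bn}u_n\|_2^2=\lambda\|u_n\|_2^2+\|u_n\|_{p+1}^{p+1}+\theta\int u_n^2\ln u_n^2\,dV_{\mathbb{B}^N}$, and absorbing the log term via the log-Sobolev inequality with $\epsilon$ small, produces a uniform $H^1(\bn)$ bound on $\{u_n\}$.

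\medskip

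Pass to a subsequence with $u_n\rightharpoonup u$ in $H^1_r(\bn)$, and invoke the compact embedding $H^1_r(\bn)\hookrightarrow L^{p+1}(\bn)$ from Lemma 2.2(a), valid since $p+1\in(2,2^{\star})$, so that $u_n\to u$ in $L^{p+1}$ and a.e. To show $u\not\equiv 0$: if $u\equiv 0$ then $\|u_n\|_{p+1}\to 0$, and combining with the log-Sobolev inequality the constraint $I_p(u_n)=0$ would force $\|u_n\|_\lambda\to 0$, contradicting the lower bound $\|u_n\|_\lambda\geq c_0$ from Step 1. Now apply the weak lower semicontinuity of $J_p$ and $I_p$ on $H^1_r(\bn)$ provided by Corollary 2.7 to conclude $J_p(u)\leq d_p$ and $I_p(u)\leq 0$. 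If $I_p(u)<0$, Lemma \ref{Uniqueness} furnishes a unique $t_0\in(0,1)$ with $t_0 u\in\mathcal{N}_p$; but then, using the displayed identity on $\mathcal{N}_p$ and the strict monotonicity of $t\mapsto \frac{\theta t^2}{2}\|u\|_2^2+(\frac{1}{2}-\frac{1}{p+1})t^{p+1}\|u\|_{p+1}^{p+1}$,
\begin{align*}
J_p(t_0 u)<\tfrac{\theta}{2}\|u\|_2^2+\Bigl(\tfrac{1}{2}-\tfrac{1}{p+1}\Bigr)\|u\|_{p+1}^{p+1}\leq \liminf_n J_p(u_n)=d_p,
\end{align*}
contradicting the definition of $d_p$. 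Hence $I_p(u)=0$, so $u\in\mathcal{N}_p$ and $J_p(u)=d_p>0$. Lemma \ref{ex} then delivers positivity, smoothness, and the fact that $u$ solves \eqref{LSE}.

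\medskip

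The main technical obstacle is twofold, both stemming from the sign-indefinite term $u^2\ln u^2$: establishing that $d_p$ is strictly positive and that any minimizing sequence stays bounded in $H^1(\bn)$. Both require the log-Sobolev inequality from Section 2.4 used with a carefully chosen small parameter $\epsilon$ so that the $\epsilon\|\nabla_{\bn}u\|_2^2$ contribution can be absorbed into the leading kinetic term, while the $\|u\|_2^2\ln\|u\|_2^2$ piece is controlled by the direct $L^2$ bound coming from the displayed identity. The weak lower semicontinuity, needed both to pass $J_p$ and $I_p$ to the limit, is already packaged in Corollary 2.7, so no separate argument is required there.
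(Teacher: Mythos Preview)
Your overall strategy is essentially the paper's: control $\|u_n\|_2$ and $\|u_n\|_{p+1}$ via the identity $J_p-\tfrac12 I_p$, then $\|\nabla_{\bn}u_n\|_2$ via the log-Sobolev inequality, extract a weak limit using the compact embedding $H^1_r\hookrightarrow L^{p+1}$, and run the $t_0$-argument from Lemma~\ref{Uniqueness}. Two points, however, need repair.

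First, your Step~1 does not work as written. The log-Sobolev inequality is an \emph{upper} bound on $\int u^2\ln u^2$; fed into $I_p(u)=0$ it yields an upper bound on $\|\nabla_{\bn}u\|_2^2$ in terms of $\|u\|_2$ and $\|u\|_{p+1}$, the wrong direction for a coercivity statement. For general $\lambda\in\mathbb{R}$ (the stated hypothesis) the symbol $\|u\|_\lambda$ is not even a norm once $\lambda\ge\lambda_1$, and the term $\lambda\|u\|_2^2$ cannot be absorbed into the gradient. The paper obtains the key lower bound $\|u\|_{p+1}\gtrsim 1$ on $\mathcal{N}_p$ by a different, pointwise mechanism: pick $\delta>0$ so small that $\lambda+\theta\ln s^2\le 0$ for $0<s\le\delta$; then on $\{u<\delta\}$ the $\lambda$-term is absorbed by the negative log part, while on $\{u\ge\delta\}$ one estimates $u^2\le\delta^{-(p-1)}u^{p+1}$ and $(u^2\ln u^2)^+\lesssim u^{p+1}$. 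This gives $\|\nabla_{\bn}u\|_2^2\lesssim\|u\|_{p+1}^{p+1}$, and Poincar\'e--Sobolev with parameter $0$ (not $\lambda$) closes the loop. The resulting bound then simultaneously yields $d_p>0$ and, after passing to the strong $L^{p+1}$ limit, $u\not\equiv 0$; your separate Step~6 contradiction becomes unnecessary.

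Second, you must check that the weak limit $u$ lies in $X$, i.e.\ that $u^2\ln u^2\in L^1(\bn)$; otherwise $u\notin\mathcal{N}_p$ and neither Lemma~\ref{Uniqueness} nor the lower semicontinuity of $I_p$ (which may take the value $+\infty$) applies. The paper fills this by observing that the a~priori bounds together with $I_p(u_n)=0$ force $\int(u_n^2\ln u_n^2)^-\lesssim 1$, and Fatou's lemma then gives $\int|u^2\ln u^2|<\infty$.
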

\begin{proof}
	Let $\{u_n\}\subset \mathcal{N}_p$ be a minimising sequence such that $J_p(u_n)\to d_p$,
	Then,
	$J_p(u_n)=d_p +o(1)$ gives
	\begin{align}\label{Eq3.1.1}
		\frac{1}{2}\inth |\nabla_{\bn}u_n|_g^2\dx -\frac{\la}{2}\|u_n\|_2^2-\frac{1}{p+1}\|u_n\|_{p+1}^{p+1}-\frac{\theta}{2}\inth u_n^2(\ln u_n^2-1)\dx=d_p +o(1),
	\end{align}
	and $I_p(u_n)=0$ gives
	\begin{align}\label{Eq3.1.2}
		\|u_n\|_{\la}-\|u_n\|_{p+1}^{p+1}-\theta\inth u_n^2\ln u_n^2=0.
	\end{align}
	Recall the logarithmic-Sobolev Inequality; for any $u\in H^1(\bn)$ and for all $\epsilon>0$,  
	\begin{align*}
		\inth u^2 \ln u^2 \dx\leq \frac{\epsilon}{\pi}\|\nabla_{\bn} u\|_2^2+\|u\|_2^2(\ln\|u\|_2^2 +C_1-C_2\ln\epsilon).
	\end{align*}
	We first show that $\|u_n\|_{\la}\lesssim 1.$
	Multiplying  \eqref{Eq3.1.2} by $\frac{1}{2}$ and subtracting from \eqref{Eq3.1.1}, we obtain
	\begin{align*}
		\frac{\theta}{2}\|u_n\|_2^2+\left(\frac{1}{2}-\frac{1}{p+1}\right)\|u_n\|_{p+1}^{p+1}\lesssim 1,
	\end{align*}
	which yields $\|u_n\|_2^2\lesssim 1$ and $\ \|u_n\|_{p+1}^{p+1}\lesssim 1.$
	Plugging this in \eqref{Eq3.1.1} and using the logarithmic Sobolev inequality, we get
	\begin{align*}
		\frac{1}{2}\|\nabla_{\bn}u_n\|_{2}^2 &\lesssim C+\frac{\theta}{2}\inth u_n^2\ln u_n^2\dx\\
		&\lesssim C+ \epsilon\|\nabla_{\bn} u_n\|_2^2+ \|u_n\|_2^2(\ln \|u_n\|_2^2+C_1-C_2\ln\epsilon).
	\end{align*}
	Choosing $\epsilon$ small enough we deduce  $\|\nabla_{\bn} u\|_2^2\lesssim 1$. Hence up to a subsequence
	$u_n\rightharpoonup u$ in  $H^1_r(\bn), \ u_n\to u$ in  $L^q(\bn) \mbox{ for } 2<q<2^{\star}$ and a.e. in $\bn.$
	
	\medskip

	Since $I_p(u_n)=0,$ using the above bounds, we get $\inth (u_n^2\ln u_n^2)^-dx\lesssim 1$  and hence by Fatou's lemma  $\inth |u^2\ln u^2|\dx <\infty$ proving $u\in X\cup \{0\}$.
	
	\medskip
	
	Now, we prove a positive lower bound for the sequence. Let $\delta>0$ be such that $-\la u^2 + \theta (u \ln u^2)^{-} \geq 0$ for $u \leq \delta.$
	By $I_p(u_n)=0$ 
	\begin{align*}
		\|\nabla_{\bn} u_n\|_{2}^2-\|u_n\|_{p+1}^{p+1}&\leq |\la|\int_{\{u \geq \delta\}} u^2 \dx +\theta\inth (u_n^2\ln u_n^2)^+\dx \lesssim \inth u_n^{p+1}\dx.
	\end{align*}
	This, combined with the Poincar\'e-Sobolev inequality yields
	\begin{align*}
		\|u_n\|_{p+1}^2\lesssim\|u_n\|_{\la}^2\lesssim \|u_n\|_{p+1}^{p+1}.
	\end{align*} 
	Therefore $\|u_n\|_{p+1}\gtrsim C>0$. Note that
	\begin{align*} 
		J_p(u_n)-\frac{1}{2}I_p(u_n)=\frac{\theta}{2}\|u_n\|_2^2+\left(\frac{1}{2}-\frac{1}{p+1}\right)\|u_n\|_{p+1}^{p+1}.
	\end{align*}
	The right hand side of the above equation has a uniform positive lower bound  and $J_p(u_n)-\frac{1}{2}I_p(u_n)\to d_p$. Hence, we have $d_p>0$. Since $u_n$ is strongly convergent in $L^{p+1}(\bn)$, we have $\|u\|_{p+1}^{p+1}\gtrsim C$. Hence $u\not\equiv 0$ and $u\in X$.
	
	It remains to show that $I_p(u) = 0.$ By weak lower semicontinuity of $I_p$, we already have $I_p(u)\leq 0 $.
	By Lemma \ref{Uniqueness}, there exists $t\in(0,1]$, such that $I_p(tu)=0.$ We will show that $t=1$. We have
	\begin{align*}
		d_p\leq J_p(tu)=J_p(tu)-\frac{1}{2}I_p(tu)&=\bigg(\frac{1}{2}-\frac{1}{p+1}\bigg)t^{p+1}\|u\|_{p+1}^{p+1}+\frac{\theta}{2}t^2\|u\|_2^2\\
		&\leq \bigg(\frac{1}{2}-\frac{1}{p+1}\bigg)\|u\|_{p+1}^{p+1}+\frac{\theta}{2}\|u\|_2^2\\
		&\leq \liminf \ \left[\bigg(\frac{1}{2}-\frac{1}{p+1}\bigg)\|u_n\|_{p+1}^{p+1}+\frac{\theta}{2}\|u_n\|_2^2 \right]\\
		&= \lim \ \left[J_p(u_n)-\frac{1}{2}I_p(u_n) \right]=d_p .
	\end{align*}
	Hence all the inequalities in the above chain are equalities which is only possible if $t=1$ as $d_p>0$. This concludes the proof.
\end{proof}
\subsection{The critical case: $p = 2^{\star} - 1$}
Our main aim is to show that $d_{2^\star-1}$ is attained. We aim to approximate $d_{2^{\star}-1}$ by optimizers of sub-critical problems. To achieve this, we first prove a few lemmas that will help us reach our goal.
\begin{Lem}\label{limsup}
	We have 
	$$\limsup\limits_{p\to 2^{\star}-1 \atop p\in(1,2^{\star}-1)} \ d_p\leq d_{2^{\star}-1} .$$
\end{Lem}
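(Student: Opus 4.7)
Fix $\delta > 0$. Using the definition of $d_{2^{\star}-1}$ as an infimum, pick $w \in \mathcal{N}_{2^{\star}-1}$ with $J_{2^{\star}-1}(w) \leq d_{2^{\star}-1} + \delta$. Since $w \in X \subset H^1(\bn)$, the Poincar\'e-Sobolev inequality gives $w \in L^q(\bn)$ for every $q \in [2, 2^{\star}]$, and in particular the function $|w|^2 + |w|^{2^{\star}} \in L^1(\bn)$ will serve as a dominating majorant below. For each subcritical $p \in (1, 2^{\star}-1)$, Lemma~\ref{Uniqueness} provides a unique $t_p > 0$ with $I_p(t_p w) = 0$; then $t_p w \in \mathcal{N}_p$, so that $d_p \leq J_p(t_p w)$.

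The first key step is to show that $t_p \to 1$ as $p \to (2^{\star}-1)^{-}$. The identity $I_p(t_p w) = 0$ rewrites, after dividing by $t_p^2$, as
\[
\|w\|_{\la}^2 - \theta \inth w^2 \ln w^2 \dx \;=\; 2\theta \|w\|_2^2 \ln t_p + t_p^{p-1} \|w\|_{p+1}^{p+1}.
\]
The left-hand side is independent of $p$ and, because $w \in \mathcal{N}_{2^{\star}-1}$, it equals $\|w\|_{2^{\star}}^{2^{\star}}$. For each fixed $p$ the right-hand side is strictly increasing in $t$ (this is exactly the monotonicity exploited in Lemma~\ref{Uniqueness}). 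Dominated convergence with the majorant recorded above yields $\|w\|_{p+1}^{p+1} \to \|w\|_{2^{\star}}^{2^{\star}}$ as $p \to (2^{\star}-1)^{-}$. Combining these two facts, the unique root $t_p$ of the displayed equation must converge to $1$.

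With $t_p \to 1$ in hand, I expand
\[
J_p(t_p w) = \tfrac{t_p^2}{2}\|w\|_{\la}^2 - \tfrac{t_p^{p+1}}{p+1}\|w\|_{p+1}^{p+1} - \tfrac{\theta t_p^2}{2}\inth w^2(\ln w^2 - 1)\dx - \theta t_p^2 \ln t_p \, \|w\|_2^2
\]
and pass to the limit $p \to (2^{\star}-1)^{-}$, again using dominated convergence for the $L^{p+1}$ term and the fact that the $\theta$-terms are independent of $p$, to conclude $J_p(t_p w) \to J_{2^{\star}-1}(w)$. Hence
\[
\limsup_{p \to (2^{\star}-1)^{-}} d_p \;\leq\; \lim_{p \to (2^{\star}-1)^{-}} J_p(t_p w) \;=\; J_{2^{\star}-1}(w) \;\leq\; d_{2^{\star}-1} + \delta,
\]
and letting $\delta \to 0$ yields the claim.

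The only delicate point is the continuity of the Nehari scaling $t_p$ in the exponent $p$ up to the critical endpoint; it is handled by the strict monotonicity in $t$ of the right-hand side of the Nehari identity together with the convergence $\|w\|_{p+1}^{p+1} \to \|w\|_{2^{\star}}^{2^{\star}}$, both of which follow from elementary considerations. Everything else amounts to bookkeeping, and the $\limsup$ inequality is in this sense an almost free consequence of the fact that the critical Nehari manifold can be approximated by its subcritical counterparts via fibre maps.
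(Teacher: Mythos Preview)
Your proof is correct and follows essentially the same approach as the paper: pick a near-minimizer $w\in\mathcal{N}_{2^\star-1}$, project it onto each subcritical Nehari set via the fibre map $t_p$, show $t_p\to 1$, and pass to the limit in $J_p(t_pw)$. The only cosmetic difference is in the verification of $t_p\to 1$: the paper extracts a convergent subsequence of $\{t_p\}$, passes to the limit in $I_p(t_pw)=0$ to get $I_{2^\star-1}(t_0w)=0$, and invokes uniqueness to force $t_0=1$; you instead use the strict monotonicity of the right-hand side of the Nehari identity together with the dominated-convergence limit $\|w\|_{p+1}^{p+1}\to\|w\|_{2^\star}^{2^\star}$ to pin down $t_p\to 1$ directly, which is a slightly cleaner way to reach the same conclusion.
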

\begin{proof}
	By definition, for every $\epsilon>0$, there exists $u\in  \mathcal{N}_{2^{\star}-1}$ such that $J_{2^\star-1}(u)<d_{2^{\star}-1}+\epsilon$.
	Let $p_n \in (1, 2^{\star}-1)$ be such that $p_n\to 2^{\star}-1$ as $n \to \infty$. Then for each $n \in \mathbb{N}$, there exists $t_n$ such that $I_{p_n}(t_n u)=0$. Expanding this we obtain
	\begin{align*}
		t_n^2\|u\|_{\la}^2= t_n^{p_n+1}\|u\|_{p_n+1}^{p_n+1}+\theta  t_n^2 \inth u^2\ln(t_n^2 u^2)\dx,
	\end{align*}
	which immediately gives that $|t_n|\lesssim 1$.
	Thus up to a subsequence we have, $t_n\to t_0 \in \mathbb{R}$. Then it is easy to prove that
	\begin{align*}
		I_{p_n}(t_nu)\to I_{2^{\star}-1}(t_0u),
	\end{align*}
	which yields $I_{2^{\star}-1}(t_0 u)=0$. Hence by Lemma \ref{Uniqueness}, we have $t_0=1$. This is true for every subsequence of $t_n $. Since every subsequence of the sequence $\{t_n\}$ has a further subsequence converging to the unique limit 1, the whole sequence also converges to 1.
	\begin{align*}
		\limsup_{n\to\infty} d_{p_n}\leq \lim_{n \to \infty} J_{p_n}(t_nu)=J_{2^{\star}-1}(u)<d_{2^{\star}-1}+\epsilon
	\end{align*}
	Since $\epsilon$ is arbitrary, this  concludes the proof. 
\end{proof}

\begin{Lem} \label{cpt 2star}
	Let $1 < p \leq 2^{\star} - 1$ and let $u$ be a  solution to \eqref{LSE}. Assume that there exists a $\delta >0,$ such that 
	$$\|u\|_{p+1}^{p+1}<(1-\delta)^{\frac{N}{2}}S^{\frac{N}{2}} \ \ 
	\mbox{and} \ \ \|\nabla_{\bn} u\|_2^2\lesssim 1 \ .$$
	Then, there exists an $r = r(\delta)>2^{\star}$ such that $u\in L^r(\bn)$ and $\|u\|_r \lesssim_ {\delta} 1.$ 
\end{Lem}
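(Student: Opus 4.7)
The plan is to perform one step of Moser-type iteration, using the smallness of $\|u\|_{p+1}$ to absorb the critical nonlinearity. For $K\geq 1$ and $\beta>1$ close to $1$ (to be determined), set $u_K:=\min\{u,K\}$ and $w:=u\,u_K^{\beta-1}$, and take $\phi_K:=u\,u_K^{2(\beta-1)}$ as a test function in the weak formulation of \eqref{LSE}. A direct computation yields
\begin{align*}
\int_{\bn}|\nabla_{\bn}u|^2 u_K^{2(\beta-1)}\dx + 2(\beta-1)\int_{\bn}|\nabla_{\bn}u_K|^2 u_K^{2(\beta-1)}\dx = \int_{\bn}\bigl(\lambda + u^{p-1} + \theta\ln u^2\bigr) w^2\dx,
\end{align*}
while the pointwise identity $|\nabla_{\bn}w|^2 = u_K^{2(\beta-1)}|\nabla_{\bn}u|^2 + (\beta^2-1)\,u_K^{2(\beta-1)}|\nabla_{\bn}u_K|^2$ gives $\int|\nabla_{\bn}w|^2 \leq \tfrac{\beta+1}{2}\cdot(\text{LHS above})$.

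For the power nonlinearity, splitting $\bn=\{u\leq 1\}\cup\{u>1\}$ and using $p-1\leq 2^\star-2$, one bounds $\int u^{p-1}w^2 \leq \int w^2 + \|u\|_{2^\star}^{2^\star-2}\|w\|_{2^\star}^2$. The hypothesis $\|u\|_{p+1}^{p+1}<(1-\delta)^{N/2}S^{N/2}$ translates, in the critical case $p=2^\star-1$, directly to $\|u\|_{2^\star}^{2^\star-2}\leq (1-\delta)S$; in the sub-critical case, the same bound follows by interpolating $\|u\|_{2^\star}$ between $\|u\|_{p+1}$ and $\|\nabla_{\bn}u\|_2$. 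The Sobolev inequality $S\|w\|_{2^\star}^2\leq \|\nabla_{\bn}w\|_2^2$ (with the same optimal constant on $\bn$ by conformal invariance) then gives $\int u^{p-1}w^2 \leq \int w^2 + (1-\delta)\|\nabla_{\bn}w\|_2^2$. For the logarithmic term, we use the elementary bound $|\ln u^2|\leq C_\epsilon(u^{-\epsilon}+u^\epsilon)$ for any $\epsilon>0$: the $u^\epsilon$-contribution is, for $\epsilon$ small, handled by interpolating $u^{2+\epsilon}$ between $L^{p+1}$ and $L^{2^\star}$ to extract a bound of the form $\eta\|\nabla_{\bn}w\|_2^2 + C_\eta\|w\|_2^2$; the $u^{-\epsilon}$-contribution is immediate on $\{u\geq 1\}$, while on $\{u<1\}$ one applies the sharper estimate $|u^2\ln u^2|\leq C_\eta u^{2-\eta}$ together with $u\in L^2\cap L^{2^\star}$ (via $\|\nabla_{\bn}u\|_2\lesssim 1$) to close.

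Combining these estimates and choosing $\beta>1$ so close to $1$ that $\tfrac{\beta+1}{2}(1-\delta+\eta)<1$, we absorb the critical term and obtain $\|\nabla_{\bn}w\|_2^2\lesssim_\delta 1+\|w\|_2^2$ uniformly in $K$. Interpolating $L^{2\beta}$ between $L^2$ and $L^{2^\star}$ (for $\beta$ only slightly above $1$) gives $\|w\|_2\lesssim_\delta 1$, so Sobolev yields a uniform bound on $\|u\,u_K^{\beta-1}\|_{2^\star}$; passing to the limit $K\to\infty$ by monotone convergence delivers $u\in L^{2^\star\beta}(\bn)$ with $\|u\|_{2^\star\beta}\lesssim_\delta 1$, and $r:=2^\star\beta>2^\star$ is the desired exponent. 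The main obstacle is the logarithmic term: it is singular at $u=0$ and does not fit the classical Brezis--Kato framework, and on the infinite-volume space $\bn$ naive bounds lose integrability. The delicate point is the careful balance between the splitting $\{u<1\}$ vs.\ $\{u\geq 1\}$, the choice of $\epsilon$, and the simultaneous exploitation of the $H^1$-bound and the assumed $L^{p+1}$-smallness.
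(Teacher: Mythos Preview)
Your approach is the same Br\'ezis--Kato/Moser iteration as the paper's, with the same truncated test function (your $\phi_K=u\,u_K^{2(\beta-1)}$ is exactly the paper's $\phi=u\min\{|u|^{2s},L^2\}$ with $s=\beta-1$, $L=K^{\beta-1}$), and the same absorption of the critical term via the Sobolev constant. There is, however, a genuine gap in your handling of the power nonlinearity in the sub-critical range.

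You bound $u^{p-1}\le u^{2^\star-2}$ on $\{u>1\}$ and then apply H\"older to arrive at $\|u\|_{2^\star}^{2^\star-2}\|w\|_{2^\star}^2$. In the critical case this is fine, but for $p<2^\star-1$ your claim that ``the same bound follows by interpolating $\|u\|_{2^\star}$ between $\|u\|_{p+1}$ and $\|\nabla_{\bn}u\|_2$'' does not hold: interpolation of $L^{2^\star}$ from below would require a norm strictly above $2^\star$, which is precisely what the lemma is meant to produce, and the bare Sobolev bound $\|u\|_{2^\star}\le S^{-1/2}\|\nabla_{\bn}u\|_2\lesssim 1$ gives no smallness. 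The paper avoids this detour: it applies H\"older on $\{u>1\}$ with exponent $N/2$ on the factor $u^{p-1}$ directly, observing that $(p-1)\tfrac{N}{2}\le p+1$ for all $1<p\le 2^\star-1$, hence $u^{(p-1)N/2}\le u^{p+1}$ on $\{u>1\}$ and
\[
\int_{\{u>1\}} u^{p-1}w^2 \;\le\; \Bigl(\int u^{p+1}\Bigr)^{2/N}\|w\|_{2^\star}^2 \;<\; (1-\delta)\,S\,\|w\|_{2^\star}^2
\]
straight from the hypothesis. This one-line fix closes the gap uniformly in $p$.

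For the logarithmic term the paper is also more direct (working with $\theta>0$): the contribution on $\{u<1\}$ has a favourable sign and is simply dropped, while on $\{u>1\}$ one uses $u\ln u^2\lesssim u^{1+\gamma}$ and controls everything by $\|u\|_{2^\star}^{2^\star}\lesssim 1$. Your bound $|\ln u^2|\le C_\epsilon(u^{-\epsilon}+u^\epsilon)$ is workable but heavier than needed here; in particular, the ``$\eta\|\nabla_{\bn}w\|_2^2+C_\eta\|w\|_2^2$'' form you invoke for the $u^\epsilon$-piece is not spelled out and would require an additional splitting argument to make precise.
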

\begin{proof}
	We follow Br\'ezis-Kato's argument.
	Define, for each $L>1$, $\phi= u \min\{|u|^{2s},L^2\},$ where $s >0$ will be fixed later. Then $|\nabla_{\bn} \phi|_g=|\nabla_{\bn} u|_g(2s|u|^{2s-1}\chi_{\{|u|^{2s}<L^2\}}+\min\{|u|^{2s},L^2\})$ $\in L^2(\bn)$ and hence $\phi \in H^1(\bn)$. Now, we show that $\phi\in X$ for $4s+2<2^{\star}.$ We have
	\begin{align*}
		\inth|\phi^2\ln \phi^2|\dx&=\int_{|u|<1} |\phi ^2 \ln \phi^2|\dx \\
		&\ + \int_{1<|u|^{2s}<L^2}|\phi^2 \ln\phi^2|\dx+\int_{|u|^{2s}\geq L^2} |\phi^2 \ln \phi^2|\dx.
	\end{align*}
	Since $u\in X$,  $L^2 u$ is also in $X$, and therefore the last term is finite. The other two terms are finite due to the following estimates:
	\begin{align*}
		\begin{cases}
			|\phi^2 \ln\phi^2|\lesssim u^{(1-\epsilon)(4s+2)} \quad \text{for}\quad |u|<1,\\
			|\phi^2 \ln\phi^2|\lesssim u^{(1+\epsilon)(4s+2)} \quad \text{for}\quad |u|\geq 1,
		\end{cases}
	\end{align*}
	where $\epsilon$ is chosen such that $(1-\epsilon)(4s+2)>2$ and $(1+\epsilon)(4s+2)<2^{\star}$.
	Hence, $\phi$ is a suitable test function for the weak formulation 
	\begin{align}\label{test fn}
		\inth\langle\nabla_{\bn} u, \nabla_{\bn} \phi\rangle_g\dx=\inth |u|^{p-1}u\phi \dx +\theta  \inth u\phi \ln u^2\dx +\la \inth u\phi\dx.
	\end{align}
	Next, we estimate each term. A straight forward computation gives
	\begin{align*}
		\langle\nabla u,\nabla \phi\rangle_g=\langle \nabla u,\nabla u\rangle_g \min\{|u|^{2s},L^2\}+2s\langle\nabla u,\nabla u\rangle_g |u|^{2s-1}\chi_{\{|u|^{2s}<L^2\}},
	\end{align*}
	\begin{align*}
		\langle\nabla(u\phi)^{\frac{1}{2}},\nabla(u\phi)^{\frac{1}{2}}\rangle_g &=\langle\nabla u,\nabla u\rangle_g (\min\{|u|^s,L\})^2+ s^2\langle\nabla u, \nabla u\rangle_g |u|^{2s}\chi_{\{|u|^{2s}<L^2\}} \\
		& \ \ \ \ +2s\langle\nabla u, \nabla u\rangle_g |u|^{2s-1}\chi_{\{|u|^{2s}<L^2\}}.
	\end{align*}
	Using these, the left hand side of \eqref{test fn} can be estimated as
	\begin{align*}
		\inth\langle\nabla_{\bn} u,\nabla_{\bn} \phi\rangle_g\dx &=\inth \langle \nabla_{\bn} (u\phi)^{\frac{1}{2}},\nabla_{\bn}(u\phi)^\frac{1}{2}\rangle_g\dx\\
		&\quad-s^2\inth\langle \nabla_{\bn} u,\nabla_{\bn} u\rangle_g u^{2s}\chi_{\{|u|^{2s}<L^2\}}\dx\\
		&\geq (1-s^2)\inth\langle \nabla_{\bn}(u\phi)^{\frac{1}{2}},\nabla_{\bn}(u\phi)^\frac{1}{2}\rangle_g\dx.
	\end{align*}
	Whereas the right hand side of \eqref{test fn} can be estimated as
	\begin{align*}
		&\inth |u|^{p-1}u \phi\dx +\theta \inth\phi (u\ln u^2)\dx +\la \inth u\phi\dx \\
		= &\int_{|u|>1} |u|^{p-1}u \phi\dx+ \int_{u<1}|u|^{p-1}u\phi \dx +\la\inth u\phi\dx +\theta \inth \phi (u\ln u^2)\dx \\
		\leq & \left(\int_{|u|>1}|u|^{(p-1)\frac{N}{2}}\dx\right)^{\frac{2}{N}}\left(\int_{|u|>1}(u\phi)^\frac{N}{N-2}\dx\right)^\frac{N-2}{N} +\int_{u<1} u\phi \dx +\la \inth u\phi\dx\\
		&\quad+\theta \inth \phi(u\ln u^2)\dx \\		
		\leq & \left(\int_{|u|>1} |u|^{p+1}\dx\right)^{\frac{2}{N}\dx}\left(\int_{|u|>1} (u\phi)^{\frac{N}{N-2}}\dx\right)^{\frac{N-2}{N}}+(\la+1)\inth u\phi\dx\\
		&+ \theta\inth \phi (u \ln u^2)\dx\\
		< & (1-\delta) S \left( \inth ( u\phi)^{\frac{N}{N-2}}\dx \right)^\frac{N-2}{N} +(\la+1)\inth u\phi \dx +\theta \int_{|u|>1}(\phi u\ln u^2)^+\dx,
	\end{align*}
	here, in the second last inequality, we used the condition $(p-1)\frac{N}{2}\leq p+1.$  
	Combining the last two inequalities, we get
	\begin{align*}
		&(1-s^2)\inth \langle\nabla_{\bn}(u\phi)^{\frac{1}{2}},\nabla_{\bn}(u\phi)^{\frac{1}{2}} \rangle_g\dx\\
		&\quad \leq (1-\delta)S\bigg(\inth(u\phi)^\frac{N}{N-2}\dx\bigg)^\frac{N-2}{N}+(\la+1)\inth u\phi\dx +\theta \int_{|u|>1}(\phi u\ln u^2)^+\dx .
	\end{align*}
	Choose $s>0$ such that $\delta-s^2>\frac{\delta}{2}.$ Then, by Poincar\'e-Sobolev inequality,
	\begin{align*}
		\bigg(\inth(u\phi)^{\frac{N}{N-2}}\dx\bigg)^{\frac{N-2}{N}} &\lesssim_{\delta} |\la+1|\inth\bigg( u^2+ |u|^{2^\star}\dx\bigg)+\theta\inth |u|^{2^{\star}}\dx\\
		&\lesssim_{\delta} 1.
	\end{align*}
	Letting $L\to \infty$, we conclude
	\begin{align*}
		\bigg(\inth |u|^{(s+1)2^{\star}}\dx \bigg)^{\frac{N-2}{N}} \lesssim_{\delta} 1.
	\end{align*}  
	This completes the proof with $r = (s+1)2^{\star}.$
\end{proof}

\begin{Rem}\label{rembreziskato}
	The same Br\'ezis-Kato argument as described above shows that, if $u \in H^1(\bn)$ is a solution to \eqref{LSE}, then $u \in L^{\infty}(\bn),$ irrespective of the values of $\theta.$ Indeed, the case $\theta >0$ has been described above. For $\theta <0,$ we cannot drop the  $-\theta \int_{\{u \leq 1\}} (\phi u \ln u^2)^-$ term. However, since we assume that $u \in H^{1}(\bn),$ approximating $u$ by $C_c^{\infty}$ functions and passing to the limit in the weak formulation we can conclude that 
	\begin{align*}
	-\theta \inth ( u^2 \ln u^2)^- \dx &= \inth (|\nabla_{\bn} u|_g^2 - \la u^2 - |u|^{p+1} - \theta (u^{2}\ln u^2)^{+})\dx \\
	&\lesssim 1.	
	\end{align*}
	Hence, $u^2\ln u^2 \in L^1(\bn).$ As a result, we can estimate the term $-\theta \int_{\{u \leq 1\}} (\phi u \ln u^2)^-\dx $ uniformly by $\|u^2\ln u^2\|_{1}$ and $\|u\|_{2^{\star}}.$ Hence, by translation invariance of the problem and elliptic regularity, we conclude that if $u \in H^1(\bn)$ is a solution to \eqref{LSE}, $\theta \in \mathbb{R}$ then $u(x) \rightarrow 0$ as $|x| \rightarrow 1.$
\end{Rem}

\begin{Th}
	There exists $u\in \mathcal{N}_{2^{\star}-1}$ such that $J_{2^\star-1}(u)=d_{2^\star-1}$. In particular, the equation \eqref{LSE} admits a solution for $ p = 2^{\star} - 1.$
\end{Th}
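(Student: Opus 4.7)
The plan is to obtain the minimizer as a limit of sub-critical minimizers. For each $p_n \in (1, 2^\star-1)$ with $p_n \to 2^\star-1$, Theorem \ref{Existsubcritical} yields $u_n \in \mathcal{N}_{p_n}$ with $J_{p_n}(u_n)=d_{p_n}$; by symmetrization we may take $u_n$ to be positive, radial, and non-increasing. First I would establish a uniform $H^1(\bn)$ bound on $\{u_n\}$. Combining $J_{p_n}(u_n) - \tfrac{1}{2}I_{p_n}(u_n) = d_{p_n}$ with Lemma \ref{limsup} (so $d_{p_n}$ is bounded) yields uniform control on $\|u_n\|_2^2$ and $\|u_n\|_{p_n+1}^{p_n+1}$, and then plugging into $J_{p_n}(u_n)=d_{p_n}$ and using the log-Sobolev inequality with a sufficiently small $\epsilon$ absorbs $\theta\!\int u_n^2\ln u_n^2$ into the gradient term, giving $\|\nabla_{\bn} u_n\|_2 \lesssim 1$.

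Next I would pass to the limit. Up to a subsequence $u_n \rightharpoonup u$ in $H^1_r(\bn)$ and $u_n \to u$ in $L^q(\bn)$ for $q\in(2,2^\star)$; by Fatou, $u^2\ln u^2 \in L^1(\bn)$. The critical step is strong $L^{2^\star}$ convergence. Here I invoke Lemma \ref{critical bound}: since $d_{2^\star-1}< \tfrac{1}{N}S^{N/2}$, Lemma \ref{limsup} gives $d_{p_n} \leq d_{2^\star-1}+o(1)$, and from $I_{p_n}(u_n)=0$ combined with $J_{p_n}(u_n) - \tfrac{1}{2^\star}I_{p_n}(u_n) = d_{p_n}$ one extracts $\|u_n\|_{p_n+1}^{p_n+1} \leq (1-\delta)^{N/2}S^{N/2}$ for some $\delta>0$ and all large $n$ (the extra terms $\tfrac{\theta}{2}\|u_n\|_2^2$ etc.\ are harmless since we have the strict gap). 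Lemma \ref{cpt 2star} then gives $\|u_n\|_r \lesssim_\delta 1$ for some $r>2^\star$, and part (b) of the compactness lemma for $H^1_r(\bn)$ yields $u_n \to u$ in $L^{2^\star}(\bn)$.

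With strong $L^{2^\star}$ convergence in hand, I would verify that $u \in \mathcal{N}_{2^\star-1}$ and $J_{2^\star-1}(u)=d_{2^\star-1}$. To ensure $u \not\equiv 0$, I would argue as in the sub-critical proof: from $I_{p_n}(u_n)=0$ and the Poincar\'e--Sobolev inequality one gets a uniform positive lower bound on $\|u_n\|_{p_n+1}$, which transfers to $\|u\|_{2^\star}>0$ by strong convergence; together with lower semicontinuity of $J_{2^\star-1}$, $I_{2^\star-1}$ on $H^1_r(\bn)$ and continuity of the $L^2$ and log-$L^2$ terms along the strongly $L^{2^\star}$-convergent subsequence, one obtains $I_{2^\star-1}(u)\leq 0$ and $J_{2^\star-1}(u)\leq \liminf J_{p_n}(u_n)\leq d_{2^\star-1}$. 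If $I_{2^\star-1}(u)<0$, by Lemma \ref{Uniqueness} there is $t\in(0,1)$ with $I_{2^\star-1}(tu)=0$, and using the identity $J_{2^\star-1}-\tfrac12 I_{2^\star-1} = \tfrac{\theta}{2}\|\cdot\|_2^2 + \tfrac{1}{N}\|\cdot\|_{2^\star}^{2^\star}$ one gets a strict inequality $J_{2^\star-1}(tu)<d_{2^\star-1}$, a contradiction. Hence $I_{2^\star-1}(u)=0$ and $u$ is a minimizer; applying Lemma \ref{ex} and the maximum principle concludes that $u$ is a positive classical solution of \eqref{LSE}.

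The main obstacle is precisely ensuring that the minimizing sequence does not concentrate in the Euclidean (bubbling) sense; the entire role of the energy estimate $d_{2^\star-1}<\tfrac{1}{N}S^{N/2}$ from Section \ref{critical value} is to push $\|u_n\|_{p_n+1}^{p_n+1}$ strictly below the Sobolev threshold so that the Br\'ezis--Kato bootstrap of Lemma \ref{cpt 2star} applies and radial compactness of Lemma (b) upgrades weak to strong $L^{2^\star}$ convergence.
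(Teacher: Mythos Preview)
Your proposal is correct and follows essentially the same route as the paper: approximate by subcritical minimizers $u_n\in\mathcal{N}_{p_n}$, use $J_{p_n}-\tfrac12 I_{p_n}=d_{p_n}$ together with Lemma~\ref{limsup} and Lemma~\ref{critical bound} to push $\|u_n\|_{p_n+1}^{p_n+1}$ strictly below the Sobolev threshold, invoke Lemma~\ref{cpt 2star} to obtain a uniform $L^r$ bound with $r>2^\star$, and then use radial compactness to upgrade to strong $L^{2^\star}$ convergence before closing with the $t\in(0,1]$ argument via Lemma~\ref{Uniqueness}. One small remark: the phrase ``continuity of the $L^2$ and log-$L^2$ terms'' is a bit loose---the paper (and your own earlier corollary) only claims lower semicontinuity of $I_{2^\star-1}$, which is all that is needed to get $I_{2^\star-1}(u)\le 0$ and to run the $J-\tfrac12 I$ chain through $\liminf d_{p_n}$.
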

\begin{proof}
	Choose a sequence $\{p_n\} \subset (1, 2^{\star}-1)$ such that $p_n \rightarrow 2^{\star}-1.$
	By Theorem \ref{Existsubcritical}, there exists $u_n\in  \mathcal{N}_{p_n}$, $u_n>0$ such that, $J_{p_n}(u_n)=d_{p_n}$.
	Now using Lemma \ref{limsup}, we have
	\begin{align*}
		\left(\frac{1}{2} - \frac{1}{p_n+1}\right)\|u_n\|_{p_n+1}^{p_n + 1}  \leq |J_{p_n}(u_n)-\frac{1}{2}I_{p_n}(u_n)|=d_{p_n} \leq d_{2^{\star}-1}+ o(1),
	\end{align*}
	as $n \rightarrow \infty.$ By Lemma \ref{critical bound}, there exists a $\delta >0,$ and $n_0,$ such that $\|u_n\|_{p_n+1}^{p_n + 1}  \leq (1 - \delta)^{\frac{N}{2}}S^{\frac{N}{2}}$ for all $n \geq n_0.$
	Following the same proof using the Log-Sobolev inequality of the subcritical case we conclude $\|\nabla_{\bn}u_n\|_2 \lesssim 1.$ By Lemma \ref{cpt 2star}, $\|u_n\|_r$ is uniformly bounded for some $r > 2^{\star}.$
	Hence, up to a subsequence, we have $u_n\rightharpoonup u$ in $H^1_r(\bn)$, $u_n\to u$ in $L^q(\bn)$ for $q\in(2,2^{\star}]$, and a.e. in $\bn.$
	
	\medskip
	
	As before $I_{p_n}(u_n)=0$ and using the above bounds, we get $\inth (u_n^2\ln u_n^2)^-\lesssim 1$  and hence by Fatou's lemma, we have, $\inth |u^2\ln u^2|\dx <\infty$ proving $u\in X\cup \{0\}$. Similarly as before we have $\|u_n\|_{p_n + 1} \gtrsim 1$ and by strong $L^{2^{\star}}$ convergence we conclude $\|u\|_{2^{\star}} \gtrsim 1.$ Hence $u\not \equiv 0$ and $u\in X.$ Moreover, by lower semicontinuity, $I_{2^{\star} - 1}(u) \leq 0.$

	\medskip
	
	By Lemma \ref{Uniqueness}, there exists $t\in(0,1]$ such that $I_{2^\star - 1}(tu)=0$, and hence 
	$d_{2^{\star}-1}\leq J_{2^{\star}-1}(tu).$ Now, note that 
	\begin{align*}
		J_{2^{\star}-1}(tu)=J_{2^{\star}-1}(tu)-\frac{1}{2}I_{2^{\star}-1}(tu) 
		&= \frac{\theta}{2} t^2\|u\|_2^2+\bigg(\frac{1}{2}-\frac{1}{2^{\star}}\bigg)t^{2^{\star}}\|u\|^{2^{\star}}_{2^{\star}}\\
		&\leq \frac{\theta}{2} \|u\|_2^2+\bigg(\frac{1}{2}-\frac{1}{2^{\star}}\bigg)\|u\|^{2^\star}_{2^{\star}}\\
		&\leq \liminf_{n \to \infty} \left[\frac{\theta}{2} \|u_n\|_2^2+\bigg(\frac{1}{2}-\frac{1}{p_n+1}\bigg)\|u_n\|^{p_n+1}_{p_n+1}\right]\\
		&=\liminf_{n \to \infty} \left(J_{p_n}(u_n)-\frac{1}{2}I_{p_n}(u_n)\right) = \liminf_{n\to\infty} d_{p_n}.
	\end{align*}
	Combining the last two inequalities together we get,
	\begin{align*}
		d_{2^\star-1}&\leq J_{2^{\star}-1}(tu)
		\leq \liminf_{n \to \infty} d_{p_n} \leq \limsup_{n \to \infty} d_{p_n} \leq d_{2^\star-1}.
	\end{align*}
	Hence the above chain of inequalities are equalities and $t=1$. 
	This concludes the proof.
\end{proof}

We can even say more: for $\theta>0$, a positive radial energy solution to \eqref{LSE} is actually strictly decreasing and decay to zero at infinity. Note that in the radial case, \eqref{LSE} can be written as 
\begin{align}\label{Radial} 
	u''(\rho)+ \frac{(N-1)}{\tanh \rho} u'(\rho) + \la u(\rho) + u^p(\rho) + \theta u(\rho) \ln u^2(\rho) =0 ,  \ \ u'(0) =0,
\end{align}
where $\rho= d(x, 0) = \log \left(\frac{1 + |x|}{1 - |x|}\right)$, $\la \in \mathbb{R}$. For this subsection, by abuse of notation we will write $u(x) = u(\rho),$ whenever $u$ is radial.
\begin{Lem}
	Let  $u \in H^1(\bn)$ be a radial solution to \eqref{LSE} and $\theta>0$ then 
		$u^{\prime}(\rho)<0$ for every $\rho>0$ and $\lim_{\rho \to \infty} u(\rho) = \lim_{\rho \to \infty } u^{\prime}(\rho) =0$.
\end{Lem}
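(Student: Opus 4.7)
I would split the statement into the decay at infinity and the strict monotonicity.

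For the decay, Remark~\ref{rembreziskato} (applied to $u \in H^1(\bn)$) already gives $u \in L^\infty(\bn)$ and $u(x) \to 0$ as $|x| \to 1$, which in radial form is $u(\rho) \to 0$ as $\rho \to \infty$. Bootstrapping via elliptic regularity on $\bn$ yields a uniform bound on $u''$, and the standard fact ``bounded $u''$ plus $u \to 0$ implies $u' \to 0$'' finishes this half (if $|u'(\rho_n)| \ge \varepsilon$ along some $\rho_n \to \infty$, the Lipschitz bound on $u'$ would force $u$ to change by $\gtrsim \varepsilon^2$ on a unit interval at infinity, contradicting $u \to 0$).

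For the monotonicity I would introduce the Lyapunov function
\begin{align*}
E(\rho) := \tfrac{1}{2}\bigl(u'(\rho)\bigr)^2 + H(u(\rho)), \qquad H(s) := \tfrac{\lambda}{2} s^2 + \tfrac{s^{p+1}}{p+1} + \tfrac{\theta}{2}\bigl(s^2 \ln s^2 - s^2\bigr).
\end{align*}
Multiplying \eqref{Radial} by $u'$ yields $E'(\rho) = -\tfrac{N-1}{\tanh \rho}\bigl(u'(\rho)\bigr)^2 \le 0$. Combined with $E(\infty) = 0$ (from $u, u' \to 0$ and $H(0) = 0$), this forces $E(\rho) \ge 0$ everywhere, and in particular $H(u(\rho_0)) \ge 0$ at every $\rho_0$ with $u'(\rho_0) = 0$. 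Writing $H'(s) = s\,g(s)$ with $g(s) = \lambda + s^{p-1} + 2\theta \ln s$, the hypothesis $\theta > 0$ makes $g$ strictly increasing on $(0, \infty)$ from $-\infty$ to $+\infty$; hence $g$ has a unique positive zero $s_*$, $H$ is strictly decreasing on $(0, s_*)$ and strictly increasing on $(s_*, \infty)$, and since $H(0) = 0$, $H(s_*) < 0$, $H(+\infty) = +\infty$, we have $\{s > 0 : H(s) \ge 0\} = [s^{**}, \infty)$ for a unique $s^{**} > s_*$; note $g > 0$ on this set.

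Combining the two inputs: at any $\rho_0 \ge 0$ with $u'(\rho_0) = 0$, the energy bound forces $u(\rho_0) \ge s^{**}$, hence $g(u(\rho_0)) > 0$, so the radial ODE (and, at $\rho_0 = 0$, its limiting form $N u''(0) = -H'(u(0))$) gives $u''(\rho_0) = -u(\rho_0)\,g(u(\rho_0)) < 0$. Applied at $\rho_0 = 0$, this yields $u'(\rho) < 0$ for $\rho$ in some interval $(0, \varepsilon)$; if $\rho^* > 0$ were the first subsequent zero of $u'$, then $u'$ would approach $0$ from below, forcing $u''(\rho^*) \ge 0$, contradicting $u''(\rho^*) < 0$. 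The main technical subtlety is this energy-positivity step: recognizing that $E(\infty) = 0$ together with $E' \le 0$ traps every critical value of $u$ in the ``large'' range $[s^{**}, \infty)$ where the equation forces strict concavity, thereby forbidding interior local minima.
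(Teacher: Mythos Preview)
Your proof is correct and follows essentially the same Lyapunov-energy approach as the paper: both introduce the same functional $E(\rho)=\tfrac12 (u')^2+H(u)$, show $E'\le 0$ and $E\ge 0$, and then argue that at any critical point $H(u)\ge 0$ forces $u''<0$, ruling out interior minima. The only minor difference is the order of arguments: you first obtain $u,u'\to 0$ via Remark~\ref{rembreziskato} and an interpolation step, then use $E(\infty)=0$, whereas the paper extracts $\liminf_{\rho\to\infty}(u'^2+u^2)\sinh^{N-1}\rho=0$ directly from $u\in H^1(\bn)$ and recovers the full decay only after monotonicity is in hand.
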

\begin{proof}
	Inspired from \cite{MS-2008}, we define the energy functional corresponding to \eqref{Radial} by
	\begin{equation*}
		E_u(\rho) = \frac{u'^2}{2}(\rho) + \frac{\la}{2} u^2(\rho) + \frac{|u|^{p+1}}{p+1}(\rho) + \frac{\theta}{2} u^2(\rho) (\ln u^2(\rho) -1).
	\end{equation*}
A direct computation gives $\frac{d}{d\rho}E_u(\rho) = -\frac{(N-1)}{\tanh \rho} u^{\prime 2}(\rho) \leq 0$, for all $\rho >0$. Since $u$ is an energy solution 
\begin{align*}
		\|u\|_{H^1}^2 + \|u\|_2^{2}= \omega_{N-1} \int_0^\infty (u^{\prime 2}(\rho) + u^2(\rho))\sinh^{N-1} \rho  \ d\rho  < \infty,
	\end{align*}
and hence
	\begin{equation}\label{eqn4}
		\liminf_{\rho \to \infty} \   [u^{\prime 2}(\rho) + u^2(\rho)] \sinh^{N-1} \rho=0.
	\end{equation}
	Then by \eqref{eqn4} and the monotonicity of $E_u$, we conclude  $E_u(\rho)\geq0,$ for all $\rho>0$.
	Now we claim $E_u(\rho) > 0$ for all $\rho\geq0.$ If for some $\rho_1, E_u(\rho_1) = 0$ then $E_u(\rho)\ = 0$ for all $\rho \geq \rho_1$ and so does its derivative. Hence $u^{\prime}(\rho) = 0$ for all  $\rho \geq \rho_1$ and by \eqref{eqn4}, we get $u = 0$ for all $\rho \geq \rho_1,$ a contradiction. First assume that $u^{\prime}(\rho_0)=0$ for some $\rho_0> 0$. Then 
	\begin{align*}
		0 < E_u(\rho_0) = \frac{\la}{2} u^2(\rho_0) + \frac{|u|^{p+1}(\rho_0)}{p+1} + \frac{\theta}{2} u^2(\rho_0) (\ln u^2(\rho_0) -1),
	\end{align*} 
	and since $p>1$ we get
	\begin{align*}
		\la u(\rho_0) + u^{p+1}(\rho_0)+ \theta u^2(\rho_0)(\ln u^2(\rho_0) -1) >0.
	\end{align*}
	By equation \eqref{Radial} we get, $u^{\prime\prime}(\rho_0)<0$.Therefore $u^{\prime}(\rho)$ must be $> 0$ in a small neighbourhood $(\rho_0-\epsilon, \rho_0).$ Hence  $u^{\prime}(\rho)>0$ in $(0,\rho_0)$. Since $u^{\prime\prime}(0)<0$, we have  $u^{\prime}(\rho) < 0$ in a neighbourhood of $(0,0+\epsilon)$	
which is absurd. Therefore $u^{\prime}(\rho)<0$ and using \eqref{eqn4}, we get the asymptotic decay of $u$ and $u^{\prime},$ completing the proof. 
\end{proof}

\section{$\theta <0:$ Nonexistence results}\label{nonexistence}
In this section, we prove that under the assumption $\theta<0,$ there is no positive energy solution to \eqref{LSE}, irrespective of the values of $\la.$ Recall that by an energy solution we mean that $u \in H^1(\bn).$
The main result of this section is a lower asymptotic decay estimate on the positive energy solutions.
Note that we do not assume  $u^2\ln u^2 \in L^1(\bn).$
Indeed, if we have assumed $u^2\ln u^2 \in L^1(\bn),$ then it is expected that a positive energy solution must be radial (with respect to some point say $0$). In particular, $u$ would have the radial decay 
$$
u(x) \lesssim (1 - |x|^2)^{\frac{N-1}{2}}, \ \ x \in \bn.
$$ 
 In Lemma \ref{A lower bound}, we obtain the opposite inequality on any positive solution. Hence for radial energy solutions we have the precise decay $u(x) \approx (1 - |x|^2)^{\frac{N-1}{2}}.$
The next few basic lemmas need for the proof of Theorem \ref{main}(b) and \ref{main2}.

\subsection{A Subsolution}
\begin{Lem} \label{sub}
Let $\la_0>\frac{(N-1)^2}{4}$, then there exists a constant $R_{\la_0}$ depending on $\la_0$ and $N$ such that for every $\la \geq \la_0,$ the function $u(x)=\bigg(\sinh\frac{d(0,x)}{2}\bigg)^{-(N-1)}$ satisfies
\begin{align*}
-\De_{\bn} u-\la u\leq 0
\end{align*}
 in $\{x \in \bn \ | \ d(0,x)>R_{\la_0}\}.$
\end{Lem}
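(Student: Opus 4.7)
The plan is to exploit the radial symmetry of $u$ and compute $-\Delta_{\bn}u-\lambda u$ in closed form. Writing $r=d(0,x)$ so that $u(r)=\sinh(r/2)^{-(N-1)}$, I would apply the radial form of the hyperbolic Laplacian
\begin{align*}
\Delta_{\bn}u=u''(r)+(N-1)\coth(r)\,u'(r).
\end{align*}
Differentiating twice gives $u'(r)=-\frac{N-1}{2}\cosh(r/2)\sinh(r/2)^{-N}$ and a similar expression for $u''(r)$ involving both $\sinh(r/2)^{-(N-1)}$ and $\cosh^2(r/2)\sinh(r/2)^{-(N+1)}$.

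Next I would use the double-angle identities $\sinh r = 2\sinh(r/2)\cosh(r/2)$ and $\cosh r=\cosh^2(r/2)+\sinh^2(r/2)$ to rewrite $(N-1)\coth(r)u'(r)$ purely in half-angle variables. Factoring out $\frac{N-1}{4}\sinh(r/2)^{-(N+1)}$ and applying $\cosh^2(r/2)-\sinh^2(r/2)=1$ throughout, the algebra collapses to the clean identity
\begin{align*}
\Delta_{\bn}u=-\frac{(N-1)^2}{4}u+\frac{N-1}{4}\sinh(r/2)^{-(N+1)}.
\end{align*}

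Granting this, the conclusion is immediate:
\begin{align*}
-\Delta_{\bn}u-\lambda u=\Bigl(\frac{(N-1)^2}{4}-\lambda\Bigr)u-\frac{N-1}{4}\sinh(r/2)^{-(N+1)},
\end{align*}
and for $\lambda\geq \lambda_0>\frac{(N-1)^2}{4}$ both summands on the right are strictly negative for every $r>0$, so the inequality holds on all of $\bn\setminus\{0\}$ and one may take any $R_{\lambda_0}>0$. There is no real technical obstacle here; the only conceptual point worth flagging is that the coefficient $\frac{(N-1)^2}{4}$ in the identity above is exactly the bottom of the spectrum $\lambda_1$, which is why the hypothesis $\lambda_0>\lambda_1$ is needed and sharp---and why this function $\sinh(r/2)^{-(N-1)}$ (the leading asymptotic of the hyperbolic Green/Poisson kernel) is the natural barrier to plug into the comparison argument described in the introduction.
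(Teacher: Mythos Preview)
Your proof is correct and follows the same approach as the paper: both compute $-\Delta_{\bn}u$ directly via the radial Laplacian and half-angle identities. You push the algebra one step further than the paper does---using $\cosh^2(r/2)=1+\sinh^2(r/2)$ to obtain the exact identity $\Delta_{\bn}u=-\tfrac{(N-1)^2}{4}u+\tfrac{N-1}{4}\sinh(r/2)^{-(N+1)}$---whereas the paper stops at $-\Delta_{\bn}u=\tfrac{N-1}{4}\bigl((N-2)\coth^2(\rho/2)+1\bigr)u-o(u)$ and then invokes $\coth^2(\rho/2)\to 1$ asymptotically; this is why the paper only concludes for large $\rho$ while your argument works for every $r>0$.
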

\begin{proof}
 We denote the radial coordinate by $\rho(x)=d(0,x).$ For simplicity we shall denote $u(x) = u(\rho).$ A straightforward computation gives (details can be found in Appendix)
\begin{align*}
-\De_{\bn} u-\la u \ = \ &\frac{(N-1)}{4}\left((N-2)\coth^2\frac{\rho}{2}+1-\frac{4}{(N-1)}\la\right)\left(\sinh\frac{\rho}{2}\right)^{-(N-1)} \\
& \ \ \ \ -o\left(\sinh\frac{\rho}{2}\right)^{-(N-1)}
\end{align*}
as $\rho \rightarrow \infty.$ Since $\coth \frac{\rho}{2} \rightarrow 1$ as $\rho \rightarrow \infty$
and $\la \geq \la_0 > \frac{(N-1)^2}{4},$ we conclude the proof.
\end{proof}
Note that $u$ in Lemma \ref{sub} is not in $H^1(\bn).$ However, for $\epsilon >0$
$$
u_{\epsilon}(\rho)=\left(\sinh\frac{\rho}{2}\right)^{-(N-1+\epsilon)}
$$ 
are $H^1$-functions. We also set
 $$f(\rho,\epsilon)=-u_{\epsilon}^{\prime\prime}(\rho)- (N-1)\coth\rho \ u_{\epsilon}^{\prime}(\rho)-\la u_\epsilon(\rho).$$

\begin{Lem}\label{epsilon-pertubed-subsoln}
Let $\la_0 > \frac{(N-1)^2}{4},$ and let $R_{\la_0}$ be as in Lemma \ref{sub}. Then  
there exists $\epsilon_0>0$ such that $f(\rho,\epsilon)<0$ for all $\la \geq \la_0,$ $\rho \geq R_{\la_0}$ and $\epsilon<\epsilon_0$.
\end{Lem}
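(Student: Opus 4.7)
The plan is to carry out the same differentiation as in Lemma~\ref{sub}, but keeping the exponent $\nu := N-1+\epsilon$ as a free parameter rather than fixing $\nu = N-1$. Starting from $u_\epsilon = (\sinh(\rho/2))^{-\nu}$, one computes $u'_\epsilon$ and $u''_\epsilon$ in terms of $s := \sinh(\rho/2)$ and $c := \cosh(\rho/2)$ and then eliminates the hyperbolic factors using the standard identities $c^2 = 1 + s^2$, $\sinh\rho = 2sc$, and $\cosh\rho = 1 + 2s^2$. After a routine simplification (the coefficient of $s^{-\nu-2}$ assembles into $\nu(N-2-\nu)/4$ and that of $s^{-\nu}$ into $\nu(2(N-1)-\nu)/4 - \lambda$), one arrives at the closed-form identity
\begin{equation*}
f(\rho,\epsilon) \ = \ \left[\, -\frac{(N-1+\epsilon)(1+\epsilon)}{4\sinh^{2}(\rho/2)} \ + \ \frac{(N-1)^{2} - \epsilon^{2}}{4} \ - \ \lambda \,\right] \left( \sinh \tfrac{\rho}{2} \right)^{-(N-1+\epsilon)},
\end{equation*}
which specializes to the formula underlying Lemma~\ref{sub} at $\epsilon = 0$, providing a useful consistency check.

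From here the conclusion is by inspection. The prefactor $(\sinh(\rho/2))^{-(N-1+\epsilon)}$ is strictly positive, so the sign of $f(\rho,\epsilon)$ equals the sign of the bracket. I would fix any $\epsilon_0 \in (0,1)$; then for every $\epsilon < \epsilon_0$ (with $\epsilon > -1$, which is automatic in a neighborhood of $0$) the coefficient $-(N-1+\epsilon)(1+\epsilon)/4$ of $\sinh^{-2}(\rho/2)$ is strictly negative, since both $N-1+\epsilon$ and $1+\epsilon$ are positive. Moreover, the $\rho$-independent piece obeys
\begin{equation*}
\frac{(N-1)^{2} - \epsilon^{2}}{4} - \lambda \ \leq \ \frac{(N-1)^{2}}{4} - \lambda_{0} \ < \ 0
\end{equation*}
by the standing hypothesis $\lambda_{0} > (N-1)^{2}/4$, uniformly in $\lambda \geq \lambda_0$. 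Both summands in the bracket are therefore strictly negative, and consequently $f(\rho,\epsilon) < 0$ uniformly in $\lambda \geq \lambda_0$ and in fact for every $\rho > 0$.

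In short, the only real work is the algebraic simplification leading to the displayed identity; no analytic obstacle arises afterward, as the entire estimate reduces to sign-inspection of two explicit terms. The threshold $R_{\lambda_0}$ inherited from Lemma~\ref{sub} plays no role in establishing the negativity itself — it is retained only for compatibility with how the perturbed function $u_\epsilon$ will later be matched against Lemma~\ref{sub} to serve as a genuine $H^1$-barrier for the solution $u$.
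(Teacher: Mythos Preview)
Your proof is correct, and in fact cleaner than the paper's. The paper does not isolate the closed-form identity you derive; instead it expands $f(\rho,\epsilon)$ term by term and then reorganises the result as
\[
f(\rho,\epsilon) \;=\; \big(\sinh\tfrac{\rho}{2}\big)^{-\epsilon} f(\rho,0) \;+\; \epsilon\,O\!\left(\big(\sinh\tfrac{\rho}{2}\big)^{-(N-1+\epsilon)}\right),
\]
invoking Lemma~\ref{sub} for the sign of $f(\rho,0)$ on $\rho\geq R_{\la_0}$ and then absorbing the $O(\epsilon)$ remainder by taking $\epsilon$ small. Your route replaces this perturbative bookkeeping with an exact factorisation: substituting $c^2=1+s^2$ from the outset collapses everything into two explicit summands whose signs are transparent. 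This buys you a strictly stronger conclusion --- the inequality holds for \emph{every} $\rho>0$ and for any $\epsilon_0\in(0,1)$, not merely $\rho\geq R_{\la_0}$ and $\epsilon_0$ sufficiently small --- and, as you correctly observe, reveals that the threshold $R_{\la_0}$ is an artefact of the paper's asymptotic bookkeeping rather than an intrinsic feature of the subsolution. The paper's approach has the minor advantage of making the $\epsilon=0$ case (Lemma~\ref{sub}) structurally visible as the leading term, but your direct computation subsumes both lemmas at once.
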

\begin{proof}
A detailed computation which can be found in the Appendix confirms that 
\begin{align*}
f(\rho,\epsilon)= \left(\sinh\frac{\rho}{2}\right)^{-\epsilon}f(\rho,0)+\epsilon \  O\left(\left(\sinh\frac{\rho}{2}\right)^{-(N-1+\epsilon)}\right)
\end{align*}
as $\rho \rightarrow \infty.$ By Lemma \ref{sub}, $f(\rho, 0)$ behaves like $\frac{(N-1)}{4}\left((N-1)-\frac{4}{(N-1)}\la\right)\left(\sinh\frac{\rho}{2}\right)^{-(N-1)}$ as $\rho \rightarrow \infty,$ we conclude the proof.
\end{proof}
\subsection{Asymptotic Estimate}
\begin{Lem}{(Picone's inequality)}
Let  $u,v\in W^{1,2}_{loc}(\bn)$ then
\begin{align*}
B(u,v)&:=\langle\nabla_{\bn} u,\nabla_{\bn}(u-\frac{v^2}{u^2}u)\rangle_g+\langle\nabla_{\bn} v,\nabla_{\bn}(v-\frac{u^2}{v^2}v)\rangle_g\\
&\geq
\min\{u^2,v^2\}|\nabla_{\bn}(\ln u-\ln v)|_g^2.
\end{align*}
\end{Lem}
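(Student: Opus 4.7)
The plan is to establish the algebraic identity
\begin{equation*}
B(u,v) \;=\; (u^2+v^2)\,\bigl|\nabla_{\bn}(\ln u-\ln v)\bigr|_g^2,
\end{equation*}
from which the stated inequality is immediate, because $u^2+v^2\geq \min\{u^2,v^2\}$. Since the claim is local and purely algebraic, I would argue formally assuming $u,v>0$ and smooth; the general $W^{1,2}_{\mathrm{loc}}$ case follows by an approximation/truncation argument (replacing $u$ by $u+\varepsilon$, $v$ by $v+\varepsilon$ and passing to the limit), which is standard.

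The first step is to rewrite the arguments of the outer gradients in quotient form. Setting $w:=u^2-v^2$, one has
\begin{equation*}
u-\tfrac{v^2}{u^2}u \;=\; \tfrac{w}{u}, \qquad v-\tfrac{u^2}{v^2}v \;=\; -\tfrac{w}{v},
\end{equation*}
and $\nabla_{\bn} w = 2u\,\nabla_{\bn} u - 2v\,\nabla_{\bn} v$. Applying the quotient rule,
\begin{align*}
\bigl\langle \nabla_{\bn} u,\nabla_{\bn}(w/u)\bigr\rangle_g &= \tfrac{1}{u}\bigl\langle \nabla_{\bn} u,\nabla_{\bn} w\bigr\rangle_g - \tfrac{w}{u^2}|\nabla_{\bn} u|_g^2, \\
\bigl\langle \nabla_{\bn} v,\nabla_{\bn}(-w/v)\bigr\rangle_g &= -\tfrac{1}{v}\bigl\langle \nabla_{\bn} v,\nabla_{\bn} w\bigr\rangle_g + \tfrac{w}{v^2}|\nabla_{\bn} v|_g^2.
\end{align*}

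The second step is to sum these expressions and substitute $\nabla_{\bn} w = 2u\nabla_{\bn} u-2v\nabla_{\bn} v$. After cancellation, the $|\nabla_{\bn} u|_g^2$ coefficient collapses to $\frac{u^2+v^2}{u^2}$, the $|\nabla_{\bn} v|_g^2$ coefficient to $\frac{u^2+v^2}{v^2}$, and the cross terms combine into $-2\,\frac{u^2+v^2}{uv}\langle \nabla_{\bn} u,\nabla_{\bn} v\rangle_g$. Factoring out $u^2+v^2$,
\begin{equation*}
B(u,v) \;=\; (u^2+v^2)\left(\tfrac{|\nabla_{\bn} u|_g^2}{u^2}+\tfrac{|\nabla_{\bn} v|_g^2}{v^2}-\tfrac{2}{uv}\langle \nabla_{\bn} u,\nabla_{\bn} v\rangle_g\right)
\;=\; (u^2+v^2)\,\bigl|\tfrac{\nabla_{\bn} u}{u}-\tfrac{\nabla_{\bn} v}{v}\bigr|_g^2,
\end{equation*}
and since $\nabla_{\bn}\ln u = \nabla_{\bn} u/u$ (and similarly for $v$), the identity follows. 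Bounding $u^2+v^2\geq \min\{u^2,v^2\}$ yields the desired inequality.

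There is no real obstacle; the only point requiring mild care is the regularity/positivity justification for the chain-rule computations (handled by approximating with $u+\varepsilon$, $v+\varepsilon$). Note that the proof actually produces the sharper identity $B(u,v)=(u^2+v^2)|\nabla_{\bn}(\ln u-\ln v)|_g^2$, which is stronger than the stated inequality; the weaker form with $\min\{u^2,v^2\}$ is all that is needed in subsequent applications.
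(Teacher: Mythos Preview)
Your proof is correct and follows essentially the same route as the paper: both reduce the claim to a pointwise algebraic identity, with the paper simply quoting the classical Euclidean Picone identity $\frac{|\nabla v|^2}{v^2}-|\nabla(\ln v-\ln u)|^2=\frac{\nabla u}{v^2}\cdot\nabla\frac{v^2}{u}$ and noting that the hyperbolic version follows by the conformal factor, while you carry out the computation explicitly. In fact, applying the paper's quoted identity once for $(u,v)$ and once for $(v,u)$ and adding yields exactly your sharper identity $B(u,v)=(u^2+v^2)\,|\nabla_{\bn}(\ln u-\ln v)|_g^2$, so the two arguments coincide.
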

The inequality  is a direct consequence of multiplying the conformal factor to the Euclidean  identity
\begin{align*}
\frac{|\nabla v|^2}{v^2}-|\nabla(\ln v-\ln u)|^2=\frac{\nabla u}{v^2}\nabla\frac{v^2}{u}.
\end{align*}
The one-dimensional version was used by M. Picone in \cite[Section 2]{P-1910}  to prove the Sturm-comparison theorem. The identity for general exponent can be found in \cite[Lemma 3.1]{X-2015} and \cite[Lemma 3.1]{OSV-2020}  for the Euclidean case and in \cite[Lemma 3.4]{DS-2024} for the hyperbolic case. See also \cite{BT-2021} for a generalized Picone's identity and it's applications.

\medskip

Now we can state and prove a precise lower bound of the $H^1$-solution, which will lead us to the proof of our main non-existence theorems.

\begin{Lem}\label{A lower bound}
Let $u \in H^1(\bn)$ be a positive solution to \eqref{LSE} with $\theta <0$ and $\la \in \mathbb{R}.$
There exists an $R_0>0$ and $C_0>0$ such that 
\begin{align*}
C_0 \ sinh \left(\frac{d(0,x)}{2}\right)^{-(N-1)}\leq u(x)\quad ,\, \forall x\in \bn\setminus B_{R_0}.
\end{align*}
\end{Lem}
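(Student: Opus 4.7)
The plan is to build an $H^1$-subsolution $v_\epsilon$ of $-\Delta_{\bn} - \la_0$ on the exterior of a large ball for some $\la_0 > \la_1 = (N-1)^2/4$, compare $u$ with $c_\epsilon v_\epsilon$, and then let $\epsilon \to 0^+$. The chief obstacle is that Lemma \ref{sub} forces $\la_0 > \la_1$, so the ordinary comparison principle for $-\Delta_{\bn} - \la_0$ fails. I would circumvent this via a weighted maximum principle---essentially an integrated form of Picone's inequality---that only requires $v_\epsilon \in L^2(\bn)$, which is precisely the role of the $\epsilon$-perturbation.

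First the linearization. Fix $\la_0 > (N-1)^2/4$. By Remark \ref{rembreziskato}, $u(x) \to 0$ as $d(0,x) \to \infty$, and since $\theta < 0$, we get $\theta \ln u^2 \to +\infty$. Consequently $\la + u^{p-1} + \theta \ln u^2 \geq \la_0$ outside some ball $B_{R_1}$, so that $-\Delta_{\bn} u \geq \la_0 u$ on $\bn \setminus B_{R_1}$. By Lemma \ref{epsilon-pertubed-subsoln}, for sufficiently small $\epsilon > 0$ the function $v_\epsilon(x) := \sinh(d(0,x)/2)^{-(N-1+\epsilon)}$ satisfies $-\Delta_{\bn} v_\epsilon \leq \la_0 v_\epsilon$ on $\bn \setminus B_{R_{\la_0}}$, and a radial integration confirms $v_\epsilon \in L^2(\bn \setminus B_{R_{\la_0}})$ since the integrand decays like $e^{-\epsilon \rho}$ at infinity. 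Set $R_0 := \max(R_1, R_{\la_0})$ and $c_\epsilon := \min_{\partial B_{R_0}} (u/v_\epsilon) > 0$.

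The central step is the identity
\begin{equation*}
\operatorname{div}_{\bn}\!\big(v_\epsilon^2 \, \nabla_{\bn}(u/v_\epsilon)\big) \;=\; v_\epsilon \Delta_{\bn} u - u \Delta_{\bn} v_\epsilon \;\leq\; -\la_0 u v_\epsilon + \la_0 u v_\epsilon \;=\; 0 \quad \text{on } \bn \setminus B_{R_0},
\end{equation*}
which renders $u/v_\epsilon$ superharmonic for the weighted Laplacian $\operatorname{div}_{\bn}(v_\epsilon^2 \nabla_{\bn} \,\cdot\,)$. I would test this inequality against $\phi = (c_\epsilon - u/v_\epsilon)^+ \eta_K^2$, where $\eta_K$ is a smooth cutoff equal to $1$ on $B_K$, supported in $B_{2K}$, with $|\nabla_{\bn} \eta_K|_g \lesssim 1/K$. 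Since $\phi$ vanishes on $\partial B_{R_0}$ by construction, no inner boundary term appears; integration by parts and Cauchy--Schwarz then yield
\begin{equation*}
\int_{\bn \setminus B_{R_0}} v_\epsilon^2 \eta_K^2 \, \big|\nabla_{\bn}(c_\epsilon - u/v_\epsilon)^+\big|_g^2 \dx \;\lesssim\; c_\epsilon^2 \int v_\epsilon^2 |\nabla_{\bn} \eta_K|_g^2 \dx.
\end{equation*}
The right-hand side tends to $0$ as $K \to \infty$ because $v_\epsilon \in L^2$, so $(c_\epsilon - u/v_\epsilon)^+$ is locally constant on the connected set $\bn \setminus B_{R_0}$; its vanishing on $\partial B_{R_0}$ forces it to vanish identically, giving $u \geq c_\epsilon v_\epsilon$ throughout $\bn \setminus B_{R_0}$.

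To conclude, since $v_\epsilon$ is constant on $\partial B_{R_0}$, $c_\epsilon = \sinh(R_0/2)^{N-1+\epsilon} \min_{\partial B_{R_0}} u$, which converges to $C_0 := \sinh(R_0/2)^{N-1} \min_{\partial B_{R_0}} u > 0$ as $\epsilon \to 0^+$; passing to the limit pointwise in $u(x) \geq c_\epsilon v_\epsilon(x)$ delivers the stated bound. The $\epsilon$-perturbation is indispensable because the natural comparison function $v_0 = \sinh(\rho/2)^{-(N-1)}$ sits exactly on the boundary of $L^2(\bn)$, and the cutoff/energy argument would collapse without the extra decay furnished by $\epsilon > 0$.
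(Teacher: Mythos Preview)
Your argument is correct and follows the same overall architecture as the paper: construct the $\epsilon$-perturbed barrier $v_\epsilon$ from Lemma~\ref{epsilon-pertubed-subsoln}, run a Picone-type comparison on the exterior of a large ball with a cutoff, and then let $\epsilon\to 0$. The implementation of the comparison step, however, is different and somewhat more efficient. The paper tests \eqref{subsolution} and \eqref{supersolution} against $\eta w_\epsilon^{-1}(w_\epsilon^2-u^2)^+$ and $\eta u^{-1}(w_\epsilon^2-u^2)^+$, invokes the bilinear Picone inequality, and is then forced to prove a separate Caccioppoli-type estimate to show that the term $\int |\nabla_{\bn} u|_g^2\,u^{-2}(w_\epsilon^2-u^2)^+$ is in $L^1$ so that the annular error $\Romannum{1}_R$ vanishes. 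Your use of the divergence identity $\operatorname{div}_{\bn}(v_\epsilon^2\nabla_{\bn}(u/v_\epsilon))=v_\epsilon\Delta_{\bn}u-u\Delta_{\bn}v_\epsilon$ together with the test function $(c_\epsilon-u/v_\epsilon)^+\eta_K^2$ sidesteps this entirely: since $(c_\epsilon-u/v_\epsilon)^+\le c_\epsilon$, the annular remainder is controlled directly by $c_\epsilon^2\int v_\epsilon^2|\nabla_{\bn}\eta_K|_g^2$, which tends to zero by $v_\epsilon\in L^2$. The price you pay is the appeal to Remark~\ref{rembreziskato} to obtain $u(x)\to 0$ (hence $-\Delta_{\bn}u\ge\la_0 u$ on a full exterior domain); the paper's proof is written so as to avoid this decay input and instead works only on the set $\{w_\epsilon\ge u\}\subset\{u\le\gamma\}$, which---as the paper itself remarks---makes the argument applicable in more general situations but at the cost of the additional Caccioppoli step.
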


\begin{proof}
We work in geodesic normal coordinate and let $\rho(x)=d(0,x)$. Fix $\la_0 > \frac{(N-1)^2}{4}.$ There exists $\gamma>0$ such that  
\begin{align*}
\la + \theta \ln u^2 >\la_0,\, \mbox{whenever} \ \ u\leq\gamma. 
\end{align*}
By Lemma \ref{epsilon-pertubed-subsoln}, there exists $\epsilon_0>0, R_{\la_0}$ such that  for $\epsilon<\epsilon_0$ and $\rho\geq R_{\la_0}$,  $v(x) := (\sinh\frac{\rho}{2})^{-(N-1+\epsilon)}$ satisfies,
\begin{align*}
-\De_{\bn}v -\la_0 v <0.
\end{align*}
Now, let $R_1=2\sinh^{-1}(\gamma^{-\frac{1}{N-1}})$ and $R_0=\max\{R_{\la_0},R_1\}$. We appropriately define $v$ on $\bn$ by
$v_\epsilon(\rho)=\min
\{(\sinh(\frac{\rho}{2})^{-(N-1+\epsilon)},(\sinh(\frac{R_0}{2})^{-(N-1+\epsilon)}\}$ for all $\epsilon\in [0,\epsilon_0]$. Observe that $v_{\epsilon}\in H^{1}(\bn),v_{\epsilon}\leq \gamma$ for all $0<\epsilon\leq \epsilon_0$ and $v_{\epsilon}$ is a decreasing family in $\epsilon$.
As $u$ is smooth and strictly positive there exists a $C_0\in(0,1)$ such that, $C_0 v_{\epsilon} \leq u$ on $\overline{B_{R_0}(0)}$. Note that the constant $C_0$ depends on $u, R_0,\la_0,N$ but can be chosen independent of $\epsilon.$ 
Let $w_{\epsilon}=C_0 v_{\epsilon}$. Then $w_\epsilon$ is a sub solution of the equation
$-\De_{\bn} w- \la_0 w=0$ i.e., $w_{\epsilon}$ satisfies,
\begin{align}\label{subsolution}
-\De_{\bn}w_{\epsilon} - \la_0 w_{\epsilon}\leq 0 \ \ \mbox{on} \ \rho \geq R_0.
\end{align}
Since  $u>0$ solves $-\De_{\bn}u-(\la+\theta \ln u^2)u=|u|^{(p-1)}u$, $u$ is a supersolution of the equation
$-\De w - \la_0 w=0$ i.e., $u$ satisfies,
\begin{align}\label{supersolution}
-\De_{\bn}u-\la_0 u>0 \quad  \ \ \mbox{ whenever } \ u \leq \gamma.
\end{align} 
and in particular on $\{w_{\epsilon}\geq u\}.$ Note that according to our choice of $C_0$ the set $ \{w_{\epsilon}\geq u\}$ is contained in $\{\rho \geq R_0\}.$
Now set $R > R_0,$ and we choose a cutoff $\eta \equiv 1 $ in $B_R$ and
$\eta\equiv 0$ in $\bn \setminus B_{R+1} $ and $0\leq\eta\leq 1.$

Testing the inequality \eqref{subsolution} against $\phi_1=\eta w_{\epsilon}^{-1}(w_{\epsilon}^2-u^2)^+$, 
 the inequality \eqref{supersolution} against 
$\phi_2=\eta u^{-1}(w_{\epsilon}^2-u^2)^+$
and subtracting them we get

\begin{align*}
\int_{\bn\cap\{w_{\epsilon}\geq u\}}\eta B(w_{\epsilon},u)\dx &\leq \int_{B_{R+1}\setminus B_R}|\nabla_{\bn}\eta|_g(|\nabla_{\bn} u|_g u^{-1}(w_{\epsilon}^2-u^2)^+)\dx \\
&\ \ \ \ \ \ \ \ + \int_{B_{R+1}\setminus B_R} 
|\nabla_{\bn}\eta|_g(|\nabla _{\bn}\ln w_{\epsilon}|_g (w_{\epsilon}^2-u^2)^+)\dx\\
&= \Romannum{1}_R+\Romannum{2}_R \ .
\end{align*}
By Picone's inequality we have
\begin{align*}
\int_{\{w_{\epsilon}\geq u\}}\eta B(w_{\epsilon},u)\dx \geq \int_{ \{w_{\epsilon}\geq u\}}\eta u^2|\nabla_g(\ln w_{\epsilon}-\ln u)|_g^2\dx \ .
\end{align*}
Passing through the limit $R\to \infty$ and using monotone convergence theorem we have 
\begin{align*}
\int_{ \{w_{\epsilon}\geq u\}} u^2|\nabla_g(\ln w_{\epsilon}-\ln u)|_g^2\dx \leq \limsup_{R\to \infty } \ (\Romannum{1}_R +\Romannum{2}_R).
\end{align*} 
 Now we claim to show that $\displaystyle\limsup_{R \to \infty } \ (\Romannum{1}_R +\Romannum{2}_R) = 0.$ Then we can conclude that either $u=0$ or $\ln w_{\epsilon}-\ln u=c$ on $\{w_{\epsilon}\geq u\}$. 
 
 As $u>0$ and $u$ is continuous, we have $u=w_{\epsilon}$ on the set $\{w_{\epsilon} \geq u\}$. Since the constants $C_0,R_0$ do not depend on $\epsilon,$ letting $\epsilon\to 0$ in the point wise estimate  we get the desired lower bound.

 Therefore to conclude the proof it is enough to show $\Romannum{1}_R \to 0$ and $\Romannum{2}_R \to 0$ as $R\to\infty$. The later vanishes at infinity is an easy consequence of the facts $|\nabla_g\ln w_{\epsilon}|_g<C,(w_{\epsilon}^2-u^2)^{+}\leq w_{\epsilon}^2$ and $w_{\epsilon}^2\in L^1(\bn)$.
The vanishing of $\Romannum{1}_R$ at infinity can be realised with the $L^1$ bound of the term $|\nabla u|_g^2|u^{-2}|(w_{\epsilon}^2-u^2)^{+},$ whose proof is reminiscent of the Cacciopolli inequality. We claim
\begin{align*}
\inth\frac{|\nabla_{\bn} u|^2}{u^2}(w_{\epsilon}^2-u^2)^+\dx \leq C\bigg(
\|u\|_{H^1}^2+\|w_{\epsilon}\|_{H^1}^2\bigg),
\end{align*}  
where $C$ is a dimensional constant. We note that
\begin{align}\label{superharmonic}
-\De_{\bn} u\geq 0 \quad on \ \{w_{\epsilon}\geq u\} \subset \{ u<\gamma\}.
\end{align}
For $r>0,$ define a cutoff function $\phi\equiv 1$ in $B_r$, $\phi \equiv 0$ in $\bn\setminus B_{r+1}$, $0\leq \phi\leq 1$ and $|\nabla \phi|_g\approx 1$. Fix $\delta >0.$ 
We test the inequality \eqref{superharmonic} against $\frac{\phi^2}{(u+\delta)}(w_{\epsilon}^2-u^2)^+\in H^1(\bn)$ to obtain
\begin{align*}
\inth \left\langle\nabla_{\bn} u,\nabla_{\bn}\left(\frac{\phi^2(w_{\epsilon}^2-u^2)^+}{u+\delta}\right)\right\rangle_g\dx &\geq 0.
\end{align*}
Expanding the terms we get
\begin{align*}
&\ \ \ \ 2\inth \langle \nabla_{\bn} u,\nabla_{\bn} \phi\rangle_g\frac{\phi}{u+\delta}(w_{\epsilon}^2-u^2)^+\dx+2\int_{\{w_{\epsilon}^2>u^2\}} \langle\nabla_{\bn} u,\nabla_{\bn} w_{\epsilon}\rangle_g w_{\epsilon} \frac{\phi^2}{u+\delta}\dx \\
&\geq \ \  \inth \langle \nabla u ,\nabla u\rangle_g\frac{\phi^2}{(u+\delta)^2}(w_{\epsilon}^2-u^2)^+\dx
 +2\int_{\{w_{\epsilon}^2\geq u^2\}} \langle \nabla u,\nabla u\rangle_g u \frac {\phi^2}{u+\delta} \dx .
\end{align*}
Neglecting the non negative term,  $2\displaystyle\int_{\{w_{\epsilon}^2\geq u^2\}} \langle \nabla_{\bn} u,\nabla_{\bn} u\rangle_g u \frac {\phi^2}{u+\delta}\dx$ and dividing by 2, we get
\begin{align*}
\frac{1}{2}\inth \langle \nabla_{\bn} u ,\nabla_{\bn} u\rangle_g\frac{\phi^2}{(u+\delta)^2}(w_{\epsilon}^2-u^2)^+\dx &\leq \inth \langle \nabla_{\bn} u,\nabla_{\bn} \phi\rangle_g\frac{\phi}{u+\delta}(w_{\epsilon}^2-u^2)^+\dx\\
& \ \ +\int_{\{w_{\epsilon}^2\geq u^2\}} \langle\nabla u,\nabla w_{\epsilon}\rangle_g w_{\epsilon} \frac{\phi^2}{u+\delta}\dx\\
&= \Romannum{1}+\Romannum{2} \ .
\end{align*}

Now by Cauchy-Schwartz
\begin{align*}
\Romannum{1}&\leq \frac{1}{8}\inth|\nabla u|_g^2\frac{\phi^2}{(u+\delta)^2}(w_{\epsilon}^2-u^2)^+\dx + 4\inth |\nabla \phi|^2 (w_{\epsilon}^2-u^2)^+ \dx\ ,
\end{align*}
 and
\begin{align*}
\Romannum{2}&=\int_{\{w_{\epsilon}^2\geq u^2\}} \langle\nabla u,\nabla w_{\epsilon}\rangle)w_{\epsilon} \frac{\phi^2}{(u+\delta)}\dx\\
&\leq \frac{1}{8}\int_{\{w_{\epsilon}^2\geq u^2\}} |\nabla_{\bn} u|_g^2\frac{\phi^2}{(u+\delta)^2}w_{\epsilon}^2\dx +4\inth |\nabla_{\bn} w_{\epsilon}|_g^2 \phi^2\dx.
\end{align*}

Combining the last three inequalities and using $w_{\epsilon}^2 = (w_{\epsilon}^2-u^2)^+ + u^2$ on the domain of the integrand we get
\begin{align*}
\frac{1}{4}&\inth \langle \nabla u ,\nabla u
\rangle_g\frac{\phi^2}{(u+\delta)^2}(w_{\epsilon}^2-u^2)^+\dx\\ 
\leq 4&\inth |\nabla \phi|_g^2 (w_{\epsilon}^2-u^2)^+\dx  +\frac{1}{8}\inth (|\nabla u|_g^2) \frac{\phi^2}{(u+\delta)^2}u^2\dx+4\inth |\nabla w_{\epsilon}|_g^2 \phi^2\dx  \\
\leq 4&\bigg(\inth (w_{\epsilon}^2-u^2)^+\dx +\inth (|\nabla u|_g^2) \dx +\inth |\nabla w_{\epsilon}|_g^2\dx+\inth w_{\epsilon}^2\dx\bigg)\\
\leq C&\bigg(\|u\|_{H^1}^2+\|w_{\epsilon}\|_{H^1}^2\bigg),
\end{align*}
where the last inequality follows from $(w_{\epsilon}^2-u^2)^+\leq w_{\epsilon}^2$. Now letting $\delta \to 0$, $r\to \infty$
and using monotone convergence theorem we get the required estimate.    
\end{proof}
\begin{Rem}
The above proof can be simplified by existence of $R_1>0$ such that $\la+\theta \ln u^2(\rho)\geq \la_0 $  for all $\rho>\la_0$. This can be assumed whenever $u\to 0$ as $\rho \to \infty,$ which is true in our case  by Remark \ref{rembreziskato}. However the lemma can be proved without assuming such decay of the solution and hence can be applied in more general context.    
\end{Rem}

\medskip

\noindent
{\bf Proof of Theorem \ref{main}(b).}
\begin{proof}
Thanks to Lemma \ref{A lower bound} we conclude $u\notin L^2(\bn)$ and hence not in $H^1(\bn).$ This completes the proof. 
\end{proof}

\medskip

Now we show that any positive solution $u$ of $\eqref{LSE}$ cannot be in $\mathcal{H}^1(\bn)$. 

\medskip

\noindent
{\bf Proof of Theorem \ref{main2} (a) and (b).}

\begin{proof}

Let us denote $\mathcal{H}_c^1(\bn)=\{ u\in \mathcal{H}^1(\bn)| \ supp \ u \ \mbox{is compact} \}$.

\medskip

\noindent
{\bf Step 1:} We claim that for $u>0$ and $u \in\mathcal{H}^1(\bn)$ there exists $\phi_n \in \mathcal{H}_c^1(\bn), \phi_n\geq 0$ and $\phi_n\to u $ in $\mathcal{H}^1(\bn)$.

\medskip

To prove the claim, we use the definition of $\mathcal{H}^1(\bn)$ to extract a sequence $\psi_n \in C_c^{\infty}(\bn)$ such that $\psi_n\to u $ in $\mathcal{H}^1(\bn)$.
Then, up to a subsequence we have $\psi_n\to u $ a.e.
Now, let $\phi_n=\psi_n^+$. Since $\psi_n\in C_c^{\infty}(\bn)$, it is easy to see that $\psi_n^+, \psi_n^-\in \mathcal{H}_c^1(\bn)$. Further, $\|\psi_n\|_{\la_1}=\|\psi_n^+\|_{\la_1}+\|\psi_n^-\|_{\la_1}$. Therefore $\|\phi_n\|_{\la_1}\leq \|\psi_n\|_{\la_1} $.
This implies that $\limsup \|\phi_n\|_{\la_1}\leq \|u\|_{\la_1}$. Hence up to a subsequence $\phi_n\rightharpoonup v$  in $\mathcal{H}^1(\bn)$. By Rellich-Kondrachov compactness theorem and  $u>0$, we have that up to a subsequence $\phi_n\to v$ a.e. as well as $\phi_n\to u$ a.e. and hence $v=u$. Now by weak lower semi continuity of norm we have,
\begin{align*}
\|u\|_{\la_1}\leq \liminf \|\phi_n\|_{\la_1}
\leq \limsup \|\phi_n\|_{\la_1}\leq\|u\|_{\la_1}.
\end{align*}  
Hence $\phi_n\to u$ in $\mathcal{H}^1(\bn)$. This  completes the proof of the claim.

\medskip

By density, the weak formulation holds for all test functions $\phi \in \mathcal{H}^1_c(\bn).$ 
\medskip

\noindent
{\bf Step 2:} Let $\phi_n\in \mathcal{H}^1(\bn)$ be a sequence as in Step 1, satisfying $\phi_n\to u$ in $\mathcal{H}^1(\bn)$and $\phi_n\geq 0$ where $u$ is a positive $\mathcal{H}^1$ solution of equation \eqref{LSE}. Therefore plugging $\phi_n$ into the weak formulation we get,
\begin{align*}
\langle u,\phi_n\rangle_{\la_1}+(\la_1-\la)\inth u\phi_n\dx =\inth u^p\phi_n\dx +\theta\inth \phi_n u\ln u^2\dx.
\end{align*} 
Now choosing $\delta<1$ such that $\la_1-\la-\theta \ln \delta <0$, we can rewrite the equation as follows
\begin{align*}
&\langle u,\phi_n\rangle_{\la_1}+(\la_1-\la-\theta \ln \delta )\inth u\phi_n \dx +(-\theta)\int_{\{u>\delta\}} \phi_n u\ln \left(\frac{u^2}{\delta^2}\right)\dx\\
=&\inth u^p\phi_n \dx 
+\theta\int_{\{u\leq\delta\}} \phi_n u\ln \left(\frac{u^2}{\delta^2}\right)\dx.
\end{align*} 
Neglecting the term $(\la_1-\la-\theta \ln \delta )\inth u\phi_n$ we get,
\begin{align*}
\langle u,\phi_n\rangle_{\la_1} +(-\theta)\int_{\{u>\delta\}} \phi_n u\ln \left(\frac{u^2}{\delta^2}\right)\dx\geq\inth u^p\phi_n\dx +\theta\int_{\{u\leq\delta\}} \phi_n u\ln \left(\frac{u^2}{\delta^2}\right)\dx.
\end{align*} 
Passing the limit $n\to \infty$ in L.H.S. and using Fatou's lemma in R.H.S. we get,
\begin{align*}
 \inth u^2\ln \left(\frac{u^2}{\delta^2}\right)\dx\lesssim_{\delta} \|u\|^2_{\la_1}.
\end{align*}
This implies $u\in L^2(\bn)$ and hence by Lemma \ref{A lower bound}, the desired lower bound follows,
 proving both (a) and (b) simultaneously. This completes the proof. 
\end{proof}

\begin{Rem}
It follows from the proof of Theorem \ref{main2}(b) and radial decay of $H^1(\bn)$ functions that if $u \in \mathcal{H}^1(\bn)$ is a positive radial solution to \eqref{LSE} with $\theta <0$ and $\la \in \R,$ then the following precise decay estimate holds:
\begin{align*}
C_0\left(\sinh\frac{d(0,x)}{2}\right)^{-(N-1)}\leq u(x)\leq C_1\left(\cosh \frac{d(0,x)}{2}\right)^{-(N-1)}\quad , \forall x\in \bn\setminus B_{R_0}.
\end{align*}
for some constants $C_0,C_1 >0,$ with $C_1$ depending on $u.$
It may seem that recording such a growth estimate for solutions that do not exist is meaningless, but this is indicative of solutions belonging in $H^1_{loc}(\bn)\setminus \mathcal{H}^1(\bn)$ that admit such matching lower and upper bounds for some exponent $\alpha>0$. In the next subsection we show that even solutions satisfying such asymptotic decay do not exist.   
\end{Rem}

\subsection{Further remarks on the non-existence results.} In this subsection, we demonstrate that for $\theta <0,$
there does not exist a positive solution satisfying a strong type asymptotic decay $u(x) \approx (1 - |x|^2)^{\alpha},$ $\alpha>0.$ The hypothesis is certainly very strong, however, it is interesting that $\alpha$ could be arbitrarily small positive number. In that respect we thought to include this observation as a lemma.

\begin{Lem}
Let $\la \in \mathbb{R},$ and either $N \geq 3, 1 < p \leq 2^{\star} - 1,$ or $N=2, 1 < p < \infty$ and assume that $\theta < 0.$  Then there exists no positive solution satisfying the asymptotic  $u(x) \approx (1 - |x|^2)^{\alpha},$ for some $\alpha>0.$
\end{Lem}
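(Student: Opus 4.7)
My plan is to derive a weighted spectral-type inequality from the PDE by testing against $v^{2}/u$, and then contradict it by sending $v$ to infinity. Since $u>0$ is smooth, $\phi:=v^{2}/u$ is an admissible test function for any $v\in C_{c}^{\infty}(\bn)$. Plugging $\phi$ into the weak formulation of \eqref{LSE}, expanding $\nabla_{\bn}(v^{2}/u)=2(v/u)\nabla_{\bn}v-(v/u)^{2}\nabla_{\bn}u$, and applying the elementary Cauchy--Schwarz bound $2(v/u)\nabla_{\bn}u\cdot\nabla_{\bn}v\le(v/u)^{2}|\nabla_{\bn}u|^{2}+|\nabla_{\bn}v|^{2}$, the $|\nabla_{\bn}u|^{2}$ terms cancel and I arrive at the Picone--type inequality
\begin{equation*}
\inth\bigl(\lambda+u^{p-1}+\theta\ln u^{2}\bigr)v^{2}\dx\;\le\;\inth|\nabla_{\bn}v|^{2}\dx,\qquad\forall\,v\in C_{c}^{\infty}(\bn).\tag{$\star$}
\end{equation*}

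Next I would exploit the upper bound $u(x)\le C_{1}(1-|x|^{2})^{\alpha}$ to convert the logarithmic term into a linear weight in the hyperbolic distance $\rho(x):=d(0,x)$. Because $1-|x|^{2}\asymp e^{-\rho(x)}$ near the ideal boundary, one has $\ln u^{2}\le -2\alpha\rho+C'$ for $\rho\ge R_{1}$, and since $\theta<0$ this yields $\theta\ln u^{2}\ge 2\alpha|\theta|\rho-C''$. Dropping the nonnegative $u^{p-1}$ and enlarging $R_{0}$ so as to absorb the constant, for every $v\in C_{c}^{\infty}(\bn)$ supported in $\{\rho\ge R_{0}\}$ the inequality $(\star)$ specialises to
\begin{equation*}
\alpha|\theta|\inth\rho(x)\,v(x)^{2}\dx\;\le\;\inth|\nabla_{\bn}v|^{2}\dx.
\end{equation*}

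To violate this, I would fix a nonzero $\eta\in C_{c}^{\infty}(\bn)$ with $\eta\ge 0$ and $\mathrm{supp}\,\eta\subset B_{1}(0)$; for each $T>R_{0}+1$ pick $b_{T}\in\bn$ with $d(0,b_{T})=T$ and set $v_{T}:=\eta\circ\tau_{b_{T}}^{-1}$, which is supported in $B_{1}(b_{T})\subset\{\rho\ge R_{0}\}$. By isometry invariance of the hyperbolic volume element and the gradient, $\|v_{T}\|_{2}$ and $\|\nabla_{\bn}v_{T}\|_{2}$ are independent of $T$, while $\rho\ge T-1$ on $B_{1}(b_{T})$ gives $\int\rho\,v_{T}^{2}\dx\ge(T-1)\|\eta\|_{2}^{2}$. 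Substituting into the weighted inequality yields $\alpha|\theta|(T-1)\|\eta\|_{2}^{2}\le\|\nabla_{\bn}\eta\|_{2}^{2}$, which is absurd as $T\to\infty$.

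The core of the argument is the Picone--type inequality $(\star)$ together with the observation that, when $\theta<0$, any fixed polynomial-type decay $u\asymp(1-|x|^{2})^{\alpha}$ forces the effective potential $\lambda+\theta\ln u^{2}$ to grow linearly in $\rho$, a growth incompatible with compactly supported test functions placed arbitrarily far out. I do not foresee a serious obstacle: the test $v^{2}/u$ is classically admissible because $u$ is smooth and strictly positive, the Cauchy--Schwarz step is immediate, and the contradiction via translated bumps is transparent thanks to the homogeneity of $\bn$ under the action of the $\tau_{b}$'s.
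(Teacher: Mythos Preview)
Your argument is correct and, in fact, more economical than the paper's. The paper tests the equation against the translated profile $V[z](x)=(\cosh\tfrac{d(z,x)}{2})^{-c}$ (multiplied by a cutoff), integrates by parts, and then carries out a sequence of interaction estimates on $\int u\,V[z]^{q}$, $\int u\,V[z]$ and $\int u\ln u^{2}\,V[z]$ as $|z|\to 1$; both the upper and lower bounds in $u\approx (1-|x|^{2})^{\alpha}$ enter those computations, and the contradiction comes from matching the orders $(1-|z|^{2})^{\alpha}$ versus $(1-|z|^{2})^{\alpha}\ln(1-|z|^{2})$.

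Your route replaces all of this by a single Picone/Allegretto--Piepenbrink step: testing with $v^{2}/u$ yields the form inequality~$(\star)$, after which only the \emph{upper} bound $u\le C_{1}(1-|x|^{2})^{\alpha}$ is needed to see that the effective potential $\lambda+\theta\ln u^{2}$ grows at least like $2\alpha|\theta|\rho$ at infinity. The contradiction via translated bumps $\eta\circ\tau_{b_{T}}^{-1}$ is then immediate from isometry invariance. Besides being shorter, your argument thus actually proves a slightly stronger statement (nonexistence under the one-sided decay $u\lesssim(1-|x|^{2})^{\alpha}$), and it avoids the delicate bookkeeping of the interaction integrals. The paper's approach, on the other hand, stays closer to the comparison/barrier philosophy used earlier in Section~\ref{nonexistence} and does not rely on the Picone device.
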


\begin{proof} 
Assume the positive solution $u$ has the decay 
\begin{align*}
u(x) \approx (1 - |x|^2)^{\alpha}, \ \ \mbox{for some} \ \alpha >0,
\end{align*}
and the constant in $\approx$ may also depend on $u.$

\medskip

\noindent
{\bf Step 1:} We start with a positive subsolution of the following equation with sufficiently fast decay. Denote
   $V(x) = \left(\cosh \frac{d(0,x)}{2}\right)^{-c}, \ c>0.$ Then $V$ satisfies   

  \begin{align}\label{Sec5eqn1}
  -\De_{\bn}V-\gamma V  \lesssim V^q \quad on \ \,\bn, 
  \end{align}
  where $q=\frac{c+2}{c}$ and $\gamma \in \mathbb{R}$ 
  depends only on the dimension, and $c$ large to be determined later.  We also define $V[z](x) = V \circ \tau_{-z}(x).$
 Set a cutoff  $\phi\equiv 1$ in $B_r$, $\phi \equiv 0$ in $\bn\setminus B_{r+1}$, $0\leq \phi\leq 1$ and $|\nabla_{\bn} \phi|_g\approx 1$.
 We use the test function $\phi V[z]$ in the weak formulation, to obtain,
 \begin{align*}
 &\inth \langle\nabla_{\bn} u,\nabla_{\bn} (\phi V[z])\rangle_g-\gamma u(\phi V[z])\dx\\
 =&\inth u^p\phi V[z] + (\la-\gamma)u\phi V[z]+(\theta\ln u^2)u\phi V[z]\dx .
\end{align*}
By integration by parts,
\begin{align}\label{Sec5eqn2}
&\inth u \left[-\De_{\bn}(\phi V[z])-\gamma \phi V[z] \right]\dx\nonumber\\ 
=&\inth u^p\phi V[z]+(\la-\gamma)u \phi V[z]+\theta  u\ln u^2\phi V[z]\dx.
\end{align}
 Expanding the L.H.S., we get,
 \begin{align*}
 &\inth u\left[-\De_{\bn} V[z] -\gamma  V[z] \right]\phi\dx - \int_{B_{r+1}\setminus B_{r}} u\langle\nabla_{\bn} \phi,\nabla_{\bn} V[z]\rangle_g\dx \\
 -&\int_{B_{r+1}\setminus B_{r}} u V[z] \De_{\bn} \phi\dx.
\end{align*}
Note that in the limit the last two terms vanish, thanks to the enough decay of $V[z].$
 Letting $r\to \infty,$ \eqref{Sec5eqn2} becomes
\begin{align}
&\inth u\left[-\De_{\bn} V[z] - \gamma V[z] \right]\dx\nonumber\\
=&\inth u^p V[z]\dx+(\la-\gamma)\inth u  V[z]\dx+\theta \inth u\ln u^2 V[z]\dx.
\end{align}
 Using \eqref{Sec5eqn1} and $u$ a positive solution, we estimate
\begin{align}
\inth u V[z]^q\dx \gtrsim \inth u^p V[z]\dx+(\la-\gamma)\inth u  V[z]\dx+\theta \inth u\ln u^2 V[z]\dx.
\end{align}
Dropping the positive p-th order non linear term $\inth u^pV[z]$ in R.H.S., we have
\begin{align}\label{inequality1}
\inth u V[z]^q\dx \gtrsim (\la-\gamma)\inth u  V[z]\dx+\theta \inth u\ln u^2 V[z]\dx.
\end{align}

\noindent
{\bf Step 2:} Interaction estimates.

\medskip

Now assume $|z|\in[\frac{1}{2},1)$ and denote $z^\star=\frac{z}{|z|^2}.$ Then $|x-z^{\star}|\geq 1 - |x| \approx (1 - |x|^2),$ for all $x \in B_1.$ We now estimate one by one. We start with 
\begin{align*}
\inth u V[z]^q\dx &=\inth u \circ \tau_{z} \ V^q\dx\\
&\lesssim(1-|z|^2)^{\alpha}\int_{B_1}\frac{(1-|x|^2)^{\alpha+cq-N}}{|x-z^{\star}|^{2\alpha}}dx\\
&\lesssim (1-|z|^2)^{\alpha}\int_{B_1}{|x-z^{\star}|^{cq- \alpha - N}}dx\\
&\lesssim(1-|z|^2)^{\alpha}\int_{B_3(z^{\star})}{|x-z^{\star}|^{cq- \alpha - N}}dx\\
&\lesssim (1-|z|^2)^{\alpha},
\end{align*}
where the constant in $\lesssim$ is independent of $z$. Here $c$ is chosen so that we used  $\alpha+cq-N>0, cq- \alpha - N < N.$ Next we handle the log term.
For that we set 
\begin{align*}
&I_1 = (1 - |z|^2)^{\alpha}\ln (1-|z|^2) \int_{B_1} \frac{(1 - |x|^2)^{c + \alpha - N}}{|x - z^{\star}|^{2\alpha}} \ dx \ , \\
&I_2 = (1 - |z|^2)^{\alpha} \int_{B_1} \frac{(1 - |x|^2)^{c + \alpha - N}}{|x - z^{\star}|^{2\alpha}} \ln \left[\frac{(1 - |x|^2)}{|x - z^{\star}|^2}\right] \ dx \ ,
\end{align*}
and $I = I_1 + I_2.$ Since $ u \circ \tau_{z}(x) \leq C_2 e^{-\alpha d(x,z)}$
for some $C_2>0$ and $\theta < 0$ we see that 
\begin{align*}
2\theta I + 2\theta C_2 \inth u \circ \tau_{z} \ V\dx \leq \theta \inth u \ln u^2 \ V[z]\dx.
\end{align*}

Next we derive a lower bound on $I.$ 

\medskip

\noindent
{\bf First we estimate $ I_2.$} Again thanks to the enough decay of $V,$ the estimates are relatively straight forward. In the following the only subtlety is where $(1 - |x|)$ and $|x- z^{\star}|$ are small.
\begin{align*}
| I_2 |&\leq  
  (1 - |z|^2)^{\alpha} \int_{B_1} \frac{(1 - |x|^2)^{c + \alpha - N}}{|x - z^{\star}|^{2\alpha}}\left(| \ln (1 - |x|^2) |+ |\ln |x - z^{\star}|^2| \right)\ dx \\
 &\lesssim  
  (1 - |z|^2)^{\alpha} \int_{B_1} \frac{(1 - |x|^2)^{c + \alpha - N}}{|x - z^{\star}|^{2\alpha}}\left((1 - |x|^2)^{-\delta} +  |x - z^{\star}|^{-\delta} + O(1)\right)\ dx \\
 &\lesssim (1 - |z|^2)^{\alpha}, 
\end{align*}
where $\delta>0$ is small, and $O(1)$ is independent of $z.$
 Here we used  $\ln t \lesssim t^{ \delta}$ for $t$ large and $\delta >0$ small, and $c + \alpha - N > 0.$ 
 Here $\delta$ is chosen so that $c - \frac{N-1}{2} - N - \delta>-N,$ so that the integral is uniformly bounded as $|z| \rightarrow 1$.
 Combining all we get the lower bound:
 \begin{align}\label{I_2}
\theta I_2 \gtrsim \theta (1 - |z|^2)^{\alpha}.
\end{align}

\medskip

\noindent
{\bf Now we estimate $\theta I_1.$} As before 
\begin{align*}
\inf_{|z| \in (\frac{1}{2},1)}\int_{B_1} \frac{(1 - |x|^2)^{c + \alpha - N}}{|x - z^{\star}|^{2\alpha}} \ dx >0.
\end{align*}
The upper bound follows from the same argument, while the lower bound is just an application of Fatou's lemma. Since $\theta \ln (1 - |z|^2) \geq 0$ we conclude

\begin{align}\label{I_1}
\theta I_1 \gtrsim  \theta (1 - |z|^2)^{\alpha}\ln (1 - |z|^2).
\end{align}

 Combining \eqref{I_1} and \eqref{I_2}, we get 
\begin{align*}
\theta \inth u \ln u^2 \ V[z]\dx - 2C_2\theta \inth u V[z]\dx \gtrsim \theta (1 - |z|^2)^{\alpha}\ln (1 - |z|^2),
\end{align*}
as $|z| \rightarrow 1.$ 

Finally, the estimate of $ \inth u V[z]\dx $ is same as before and is of order $(1 - |z|^2)^{\frac{N-1}{2}}.$

\medskip

\noindent
{\bf Step 3.} Final step.
Now combining the estimates obtained step 2, and putting in the inequality \ref{inequality1} and dividing by $(1-|z|^2)^{\alpha}$ we get
\begin{align*}
C \geq -|\la-\gamma+C\theta| + C\theta\ln(1-|z|^2),
\end{align*}
where $C$ is a positive constant independent of $z.$ This gives a contradiction as $|z|\to 1$ completing the proof of non-existence of solutions.
\end{proof}

\section{Appendix}

We include a few details that were left out during the proof of non-existence results.

\medskip

\noindent
{\bf Proof of Lemma \ref{sub}.}

\begin{proof}
 Recall that $\rho(x)=d(0,x)=\ln\bigg(\frac{1+|x|}{1-|x|}\bigg).$ For simplicity we shall denote $u(x) = u(\rho).$ A straightforward computation gives
$u(\rho) \ = \ (\sinh\frac{\rho}{2})^{-(N-1)},$
$u'(\rho) =-\frac{N-1}{2} (\sinh\frac{\rho}{2})^{-N}\cosh\frac{\rho}{2},$ $
u''(\rho)=\frac{N(N-1)}{4}\sinh\frac{\rho}{2}^{-(N+1)}\cosh^2(\frac{\rho}{2})-\frac{N-1}{4}(\sinh\frac{\rho}{2})^{-N}\sinh\frac{\rho}{2}. 
$
As a result
\begin{align*}
(N-1)\coth \rho \ u'(\rho) &=-\frac{(N-1)^2}{2}\frac{\cosh\rho}{\sinh\rho} \left(\sinh\frac{\rho}{2}\right)^{-N}\cosh\frac{\rho}{2}\\
&=-\frac{(N-1)^2}{4}\frac{(2\cosh^2{\frac{\rho}{2}}-1)}{2\cosh\frac{\rho}{2}\sinh\frac{\rho}{2}}\left(\sinh\frac{\rho}{2}\right)^{-N}\cosh\frac{\rho}{2}\\
&
=-\frac{(N-1)^2}{2}\left(\cosh^2\frac{\rho}{2}\right)\left(\sinh\frac{\rho}{2}\right)^{-(N+1)}+\frac{(N-1)^2}{4}\left(\sinh\frac{\rho}{2}\right)^{-(N+1)},
\end{align*}
and hence
\begin{align*}
&-u''(\rho)- (N-1)\coth\rho \frac{du}{d\rho}(\rho)\\
=&-\frac{N(N-1)}{4}\left(\sinh\frac{\rho}{2}\right)^{-(N+1)}\left(\cosh^2 \frac{\rho}{2}\right)
+\frac{N-1}{4} \left(\sinh\frac{\rho}{2}\right)^{-(N-1)}\\
&+\frac{(N-1)^2}{2} \left(\cosh^2\frac{\rho}{2}\right)\left(\sinh \frac{\rho}{2}\right)^{-(N+1)}
 -\frac{(N-1)^2}{4}\left(\sinh\frac{\rho}{2}\right)^{-(N+1)}\\
=&\left[\frac{(N-1)^2}{2}-\frac{N(N-1)}{4}\right]\left(\sinh\frac{\rho}{2}\right)^{-(N+1)}\left(\cosh^2\frac{\rho}{2}\right) +\frac{N-1}{4}\left(\sinh\frac{\rho}{2}\right)^{-(N-1)}\\
&-\frac{(N-1)^2}{4}
\left(\sinh\frac{\rho}{2}\right)^{-(N+1)}\\
=&\frac{(N-1)(N-2)}{4}\left(\sinh\frac{\rho}{2}\right)^{-(N+1)} \left(\cosh^2\frac{\rho}{2}\right)
+\frac{(N-1)}{4}\left(\sinh\frac{\rho}{2}\right)^{-(N-1)}\\
&-o\left(\left(\sinh\frac{\rho}{2}\right)^{-(N-1)}\right)\\
=&\frac{(N-1)}{4}\left((N-2)\coth^2{\frac{\rho}{2}}+1)(\sinh\frac{\rho}{2}\right)^{-(N-1)}-o\left(\left(\sinh\frac{\rho}{2}\right)^{-(N-1)}\right).
\end{align*}
Now, since $\frac{4}{N-1}\la>{N-1}$, we have,
\begin{align*}
-\De_{\bn} u-\la u&=\frac{(N-1)}{4}\left((N-2)\coth^2\frac{\rho}{2}+1-\tilde{\la}\right)\left(\sinh\frac{\rho}{2}\right)^{-(N-1)}-o\left((\sinh\frac{\rho}{2})^{-(N-1)}\right)\\
&<0,
\end{align*}
for all $\rho>\rho_{\la}$ where $\tilde{\la}=\frac{4}{(N-1)}\la$ and $(N-2)\coth^2\frac{\rho_{\la}}{2}+1<\tilde{\la}$.
\end{proof}

\vspace{1 cm}

\noindent
{\bf Proof of Lemma \ref{epsilon-pertubed-subsoln}.}

\begin{proof}
The notation $\rho$ is the same as in the Lemma \ref{sub} and recall $u_{\epsilon}(\rho)=(\sinh\frac{\rho}{2})^{-(N-1+\epsilon)}$, $f(\rho,\epsilon)=-u_{\epsilon}^{\prime\prime}(\rho)- (N-1)\coth\rho \ u_{\epsilon}^{\prime}(\rho)-\la u_\epsilon(\rho)$. Now computing the first and second derivatives, we get
\begin{align*}
u_{\epsilon}'(\rho)&=-\frac{(N-1+\epsilon)}{2}\left(\sinh\frac{\rho}{2}\right)^{-(N+\epsilon)}\cosh{\frac{\rho}{2}}\\
u_{\epsilon}''(\rho)&=\frac{(N-1+\epsilon)}{2}\frac{(N+\epsilon)}{2}\left(\sinh\frac{\rho}{2}\right)^{-(N+1+\epsilon)}\cosh^2{\frac{\rho}{2}}-\frac{N-1+\epsilon}{4} \left(\sinh{\frac{\rho}{2}}\right)^{-(N-1+\epsilon)}.\\
\end{align*}
As a result
\begin{align*}
(N-1)(\coth\rho)u'_{\epsilon}(\rho)
&=-\frac{(N-1)(N-1+\epsilon)}{2} {\frac{\cosh\rho}{\sinh\rho}} \left(\sinh\frac{\rho}{2}\right)^{-(N+\epsilon)} \cosh\frac{\rho}{2}\\
&=\frac{-(N-1)(N-1+\epsilon)}{2}\frac{2\cosh^2\frac{\rho}{2}-1}{2\cosh\frac{\rho}{2}\sinh\frac{\rho}{2}} \left(\sinh\frac{\rho}{2}\right)^{-(N+\epsilon)}\cosh\frac{\rho}{2},
\end{align*}
and hence,
\begin{align*}
&-u''_{\epsilon}(\rho)-(N-1)(\coth{\rho})u'_{\epsilon}(\rho)-\la u_{\epsilon}(\rho) \\
=&-\frac{(N-1+\epsilon)}{2}\frac{(N+\epsilon)}{2}\left(\sinh\frac{\rho}{2}\right)^{-(N+1+\epsilon)}\left(\cosh^2\frac{\rho}{2}\right)
+\frac{N-1+\epsilon}{4}\left(\sinh\frac{\rho}{2}\right)^{-(N-1+\epsilon)}\\
& +\frac{(N-1)(N-1+\epsilon)}{2}\left(\sinh\frac{\rho}{2}\right)^{-(N+\epsilon+1)}\left(\cosh^2\frac{\rho}{2}\right) \\
& -\frac{(N-1)(N-1+\epsilon)}{2}\left(\sinh\frac{\rho}{2}\right)^{-(N+1+\epsilon)}-\la\left(\sinh\frac{\rho}{2}\right)^{-(N-1+\epsilon)}\\
=&-\frac{(N-1)}{2}\frac{N}{2}\left(\sinh\frac{\rho}{2}\right)^{-(N+1+\epsilon)}\left(\cosh^2\frac{\rho}{2}\right)
-\frac{\epsilon}{2}\frac{N-1}{2}\left(\sinh\frac{\rho}{2}\right)^{-(N+1+\epsilon)}\left(\cosh^2(\frac{\rho}{2})\right)\\
&  -\frac{\epsilon^2}{4} \left(\sinh\frac{\rho}{2}\right)^{-(N+1+\epsilon)}\left(\cosh^2(\frac{\rho}{2})\right)
-\frac{\epsilon}{2}\frac{N}{2}\left(\sinh\frac{\rho}{2}\right)^{-(N+1+\epsilon)}\cosh^2\left(\frac{\rho}{2}\right)\\
&+\frac{N-1}{4} \left(\sinh \frac{\rho}{2}\right)^{-(N-1+\epsilon)}
+\frac{\epsilon}{4}\left(\sinh(\frac{\rho}{2}\right)^{-(N-1+\epsilon)}\\
&+\frac{(N-1)^2}{2} \left(\sinh\frac{\rho}{2}\right)^{-(N+\epsilon+1)}\left(\cosh^2\frac{\rho}{2}\right)
+\epsilon\frac{(N-1)}{2}(\sinh\frac{\rho}{2})^{-(N+1+\epsilon)} \cosh^2{\frac{\rho}{2}}\\
&-\frac{(N-1)^2}{2}(\sinh\frac{\rho}{2})^{-(N+1+\epsilon)}-\frac{\epsilon}{2}(N-1)\left(\sinh\frac{\rho}{2})\right)^{-N+1+\epsilon} 
-\la \left(\sinh\frac{\rho}{2}\right)^{-(N-1+\epsilon)}.
\end{align*}
Now, clubbing the terms together we get,
\begin{align*}
f(\rho,\epsilon)=(\sinh\frac{\rho}{2})^{-\epsilon}f(\rho,0)+\epsilon O\left(\sinh(\frac{\rho}{2})^{-(N-1+\epsilon)}\right)<0.
\end{align*}
whenever $\rho>R_{\la}$ and $\epsilon<<1$.
\end{proof}

\medskip

\medskip

\noindent
{\bf Acknowledgement.}  D. Karmakar acknowledges the support of the Department of Atomic Energy, Government of India, under project no. 12-R\&D-TFR-5.01-0520. 

\medskip

\noindent
{\bf Competing interests.} The authors have no competing interests to declare that are relevant to the content of this article.

\medskip

\noindent
{\bf Data availability statement.} Data sharing not applicable to this article as no datasets were generated or analysed during the current study.

\label{Bibliography}

\bibliography{BNlog}{} 

\bibliographystyle{alpha} 

\end{document}